\newtheorem{Rem}{Remark}
\newtheorem{sats}{Theorem}
\newtheorem{prop}{Proposition}
\newtheorem{lem}{Lemma}
\newtheorem{kor}{Corollary}
\newcommand{\banm}{\begin{anm}}
\newcommand{\eanm}{\end{anm}}
\newcommand{\supp}{{\rm supp}}
\newcommand{\W}{{\mathaccent"7017 W}}
\newcommand{\dist}{{\rm dist}}
\begin{document}
\title{Domain dependence of eigenvalues  of elliptic type operators}
\author{Vladimir Kozlov (Link\"oping)\thanks{The author was supported
by the Swedish Research Council (VR)}}
\maketitle

\vspace{-6mm}

\begin{center}
{\it Department of Mathematics, Link\"oping University,
S--581 83 Link\"oping, Sweden  }\\
E-mail address: vlkoz@mai.liu.se
\end{center}

\bigskip \noindent {\bf Abstract.} The dependence on the domain is studied for the
Dirichlet eigenvalues of an elliptic operator considered in bounded domains. Their
proximity is measured by a norm of the difference of two orthogonal projectors
corresponding to the reference domain and the perturbed one; this allows to compare
domains that have non-smooth boundaries and different topology. The main result is
an asymptotic formula in which the remainder is evaluated in terms of this quantity.
As an application, the stability of eigenvalues is estimated by virtue of integrals
of squares of the gradients of eigenfunctions for elliptic problems in different
domains. It occurs that these stability estimates imply well-known inequalities for
perturbed eigenvalues.

\section{Introduction}

We consider eigenvalues of the Dirichlet problem for an elliptic operator in a
domain in $\Bbb R^n$, $n\geq 2$. Our main aim is to study how these eigenvalues
depend on the domain. The first results concerning this classical problem can be
found presumably in the book \cite{R} by Rayleigh. A general technique was proposed
by Hadamard \cite{H1}, \cite{H}, who studied perturbations of a domain with a smooth
boundary. In his works, the boundary of the perturbed domain $\Omega_\varepsilon$ is
described by the function $\tau=\varepsilon h(x')$, where $\tau$ is the variable
along the normal to the boundary, $h$ is a smooth function on the boundary and
$\varepsilon$ is a small parameter. Hadamard's formula for the perturbed first
eigenvalue $\lambda(\Omega_\varepsilon)$ of the Dirichlet Laplacian is as follows:
\begin{equation}\label{Intr1.1}
\lambda(\Omega_\varepsilon)=\lambda(\Omega_0)+
\varepsilon\int_{\partial\Omega_0}|\partial_\tau\varphi|^2h\,dS+o(\varepsilon),
\end{equation}
where $\varphi$ is the first eigenfunction such that
$||\varphi||_{L^2}=1$ and $dS$ is the surface measure on
$\partial\Omega_0$. In various generalizations of this formula,
perturbations are described by a family of smooth mappings, other
boundary conditions are considered as well as more general elliptic
operators; see \cite{Gar}, \cite{B}, \cite{DZ}, \cite{Kaw},
\cite{He} and references cited therein. Some non-smooth
perturbations of smooth boundaries, that are described by normal
shift functions, were treated in \cite{Gr1}--\cite{Gr3},
\cite{Berg}. On the other hand, there are many problems involving
more general classes of perturbations, namely, non-smooth
perturbations of non-smooth boundaries and perturbations that cannot
be described by a family of isomorphisms.

In \cite{K1}--\cite{KN}, another approach was proposed to studying
the dependence of $\lambda$ on the domain and the operator's
coefficients. It is based on an abstract theorem concerning
perturbation of eigenvalues for operators acting in different
spaces. Here we further develop the approach used in \cite{K1}. In
this paper, the main novelty is the application of a new small
parameter to measure the proximity of two spectral problems. This
parameter is a norm of the difference of orthogonal projectors on
Sobolev spaces consisting of functions given on these domains. In
Sect. \ref{SectFeb15a}, we show that the convergence with respect to
this parameter (it is actually a distance) is equivalent to the
convergence in the sense of Mosco or to $\gamma$-convergence, see
\cite{He} and \cite{B}. The latter type of convergence plays an
important role in proving the existence of solutions to various
shape optimization problems dealing with eigenvalues. Another new
point is a refined estimate of the remainder term. It allows us to
extend essentially the class of domains and their perturbations for
which a certain "generalized" asymptotic formula is still valid. Let
us turn to a detailed description of the results. For this purpose
we shall use an example of second order elliptic operator.

Let $\Omega_1$ and $\Omega_2$ be two bounded domains with nonempty intersection. We
consider a bilinear form
\begin{equation}\label{OperA}
(u,v) = \sum_{i,j=1}^n \int_{\Bbb R^n}A_{ij}(x)\partial_{x_j} u \,
\partial_{x_i} \overline{v} \, d x\, ,
\end{equation}
where $A_{ij}$ are bounded measurable real-valued functions such that
$A_{ij}=A_{ji}$ and
\begin{equation}\label{Jan3a}
\nu |\xi|^2\leq\sum_{i,j=1}^nA_{ij}(x)\xi_i\xi_j\leq \nu^{-1}
|\xi|^2
\end{equation}
for all $\xi\in\Bbb R^n\setminus{\cal O}$ and $x\in\Bbb R^n$. The form
$(\cdot,\cdot)$ defines a new inner product in the space $H_k=\W^{1,2}(\Omega_k)$,
$k=1,2$ (we suppose that functions belonging to both of these spaces are extended by
zero to the whole $\Bbb R^n$); the corresponding norm will be denoted by $||\cdot
||$. Let us consider the following spectral problems
\begin{equation}\label{TTN1}
(\varphi,v)=\lambda \langle \varphi,v\rangle\,\;\;\;\mbox{for all
$v\in H_1$}
\end{equation}
and
\begin{equation}\label{TTN2}
(U,V)=\mu \langle U,V\rangle\,\;\;\;\mbox{for all $V\in H_2$,}
\end{equation}
where $\langle\cdot ,\cdot \rangle$ is the inner product in $L^2$,
$\varphi\in H_1$ and $U\in H_2$. By $\lambda_m$ we denote the $m$th
eigenvalue of problem (\ref{TTN1}); let $X_m$ be the corresponding
eigenspace, $J_m=\dim X_m$. Our aim is to describe eigenvalues of
problem (\ref{TTN2}) located near the eigenvalue $\lambda_m$ of
(\ref{TTN1}).

If $\Omega_1$ and $\Omega_2$ are sub-domains of a bounded domain
$D$, then by $S_j$, $j=1,2,$ we denote the orthogonal projector
defined on $\W^{1,2}(D)$ whose image belongs to $H_j$. For
characterizing the proximity of $\Omega_1$ and $\Omega_2$ we use the
best constant $\sigma=\sigma(H_1,H_2)$ in the inequality
\begin{equation}\label{Feb14a}
\mid(S_1-S_2)u\mid^2\leq\sigma ||u||^2,\;\;\; u\in\W^{1,2}(D),
\end{equation}
where $\mid\cdot\mid$ is the $L^2$-norm. In Sect. \ref{SectFeb15a}, we show that
the $\gamma$-convergence of domains is equivalent to their convergence in terms of
the distance (\ref{Feb14a}).

If $\sigma$ is sufficiently small, then problem (\ref{TTN2}) has exactly $J_m$
eigenvalues, say $\mu_1,\ldots,\mu_{J_m}$, in a neighborhood of $\lambda_m$; see
Proposition \ref{Pr1}. In order to formulate one of our main results we introduce
the following notation:

\noindent $T_2u=u-S_2u$, $\Psi=\Psi_\varphi\in H_2$ is the solution of the equation
\begin{equation}\label{TTN23feb}
(\Psi,w)=(\varphi,w)-\lambda_m\langle\varphi,w\rangle\;\;\;\mbox{for
all $w\in H_2$}
\end{equation}
and
\begin{equation}\label{Sept20cz}
\rho =\max_{\varphi\in X_m, ||\varphi||=1} \big(\mid
T\varphi\mid^2+\mid\Psi_\varphi\mid^2+\sigma||
\Psi_\varphi||^2)\big).
\end{equation}
Now we are in a position to formulate the following.

\begin{sats}\label{ThIntr1}  The asymptotic formula holds:
\begin{equation}\label{Sept20dz}
\mu_k^{-1}=\lambda_m^{-1}+\tau_k+O(\rho+|\tau_k|\sigma ),\;\; k=1,\ldots,J_m.
\end{equation}
Here $\tau=\tau_k$ is an eigenvalue of the problem
\begin{equation}\label{Eig19av1aaz}
\frac{1}{\lambda_m}\Big
((\Psi_\varphi,\Psi_\psi)-(T\varphi,T\psi)-(\Psi_\varphi,\psi)-(\varphi,\Psi_\psi)
\Big ) =\tau (S\varphi , S\psi ) \;\;\;\mbox{for all $\psi\in
X_m$,}
\end{equation}
where $\varphi\in X_m$; moreover, $\tau_1,\ldots,\tau_m$ in {\rm (\ref{Sept20dz})}
run through all eigenvalues of {\rm (\ref{Eig19av1aaz})} counted according to their
multiplicity.
\end{sats}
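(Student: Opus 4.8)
The plan is to set up an abstract eigenvalue perturbation scheme comparing the operator $(\cdot,\cdot)$ restricted to $H_1$ with the same form restricted to $H_2$, both regarded as perturbations of a single "reference" comparison. Since $(\cdot,\cdot)$ is an inner product on each $H_k$ and the spectral problems \eqref{TTN1}, \eqref{TTN2} are of the form $(\varphi,v)=\lambda\langle\varphi,v\rangle$, it is natural to work with the inverse operators: let $K_1\colon H_1\to H_1$ be defined by $(K_1 f,v)=\langle f,v\rangle$ for all $v\in H_1$, and likewise $K_2$ on $H_2$. These are compact, self-adjoint, positive with respect to $(\cdot,\cdot)$, and their eigenvalues are exactly $\lambda_m^{-1}$, $\mu_k^{-1}$. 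So the statement \eqref{Sept20dz} becomes a statement about how the eigenvalue $\lambda_m^{-1}$ of $K_1$ (with eigenspace $X_m$, $\dim X_m=J_m$) splits into $J_m$ eigenvalues of $K_2$ when the two operators are close in the sense controlled by $\sigma$.

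The first step is to build, for each $\varphi\in X_m$, an approximate eigenfunction of $K_2$. The obvious candidate is $S_2\varphi\in H_2$; I would compute $(K_2 S_2\varphi,w)=\langle S_2\varphi,w\rangle$ for $w\in H_2$ and compare with $\lambda_m^{-1}\langle\varphi,w\rangle=\lambda_m^{-1}(K_1\varphi,w)$. Using $T_2\varphi=\varphi-S_2\varphi$ and the defining relation \eqref{TTN23feb} for $\Psi=\Psi_\varphi$, one rewrites $(\varphi,w)-\lambda_m\langle\varphi,w\rangle=(\Psi_\varphi,w)$, which after dividing by $\lambda_m$ turns the defect $K_2 S_2\varphi-\lambda_m^{-1}S_2\varphi$ into an explicit quantity whose $H_2$-norm is controlled by $\mid T\varphi\mid$, $\mid\Psi_\varphi\mid$ and $\|\Psi_\varphi\|$ — precisely the terms collected in $\rho$ in \eqref{Sept20cz}. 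One also needs that the map $\varphi\mapsto S_2\varphi$ is, up to $O(\sigma)$, an isometry from $X_m$ into $H_2$, which follows from \eqref{Feb14a} since $S_1\varphi=\varphi$ there, so $\mid(S_1-S_2)\varphi\mid^2\le\sigma\|\varphi\|^2$, and one shows $(S_2\varphi,S_2\psi)$ differs from $(S\varphi,S\psi)$ (the reference form in \eqref{Eig19av1aaz}) by $O(\sigma)$. This is the step where Proposition \ref{Pr1} is invoked to guarantee that exactly $J_m$ eigenvalues $\mu_k^{-1}$ of $K_2$ lie near $\lambda_m^{-1}$ once $\sigma$ (hence $\rho$) is small.

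The second step is the standard reduction to a $J_m\times J_m$ matrix eigenvalue problem. Let $P$ be the $(\cdot,\cdot)$-orthogonal spectral projector of $K_2$ onto the span of the eigenspaces associated with $\mu_1,\ldots,\mu_{J_m}$; then $P S_2$ is a near-isomorphism $X_m\to PH_2$, and the $\mu_k^{-1}$ are the eigenvalues of the compression $P K_2 P$ transported back to $X_m$ via this isomorphism. Expanding $(P K_2 P\, S_2\varphi, S_2\psi)$ and using the defect estimate from step one, together with resolvent bounds that replace $P$ by $I$ up to errors of order $\rho$ and $|\tau_k|\sigma$, one finds that the transported quadratic form equals $\lambda_m^{-1}(S\varphi,S\psi)$ plus $\frac{1}{\lambda_m}\big((\Psi_\varphi,\Psi_\psi)-(T\varphi,T\psi)-(\Psi_\varphi,\psi)-(\varphi,\Psi_\psi)\big)$ plus a remainder of size $O(\rho+|\tau_k|\sigma)$ relative to $(S\varphi,S\psi)$. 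Comparing with \eqref{Eig19av1aaz}, this matrix is exactly $\lambda_m^{-1}$ times the identity plus the matrix of the form defining the $\tau_k$, plus the controlled remainder; a perturbation estimate for eigenvalues of symmetric $J_m\times J_m$ matrices then yields $\mu_k^{-1}=\lambda_m^{-1}+\tau_k+O(\rho+|\tau_k|\sigma)$, with the $\tau_k$ ranging over all eigenvalues of \eqref{Eig19av1aaz} with multiplicity.

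The main obstacle is the bookkeeping in the second step: one must keep scrupulous track of which errors are genuinely $O(\rho)$ and which only $O(|\tau_k|\sigma)$, since $\Psi_\varphi$ itself is "first order" (its norm is part of $\rho$) while $\tau_k$ is also first order, so products such as $\sigma\|\Psi_\varphi\|$ and $|\tau_k|\,\mid T\varphi\mid$ must be absorbed correctly into $O(\rho+|\tau_k|\sigma)$ without inflating the order. The cross terms $(\Psi_\varphi,\psi)$ and $(\varphi,\Psi_\psi)$ in \eqref{Eig19av1aaz} arise precisely from the non-orthogonality $T_2\varphi\neq0$, i.e. from $\varphi\notin H_2$, and showing that exactly these four terms survive — with the right signs and the factor $1/\lambda_m$ — while everything else is lower order is the delicate computational heart of the argument. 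I would handle it by writing $\mu_k^{-1}=\lambda_m^{-1}(1+\lambda_m\tau_k+\dots)$ consistently and expanding the inverse form $K_2$ rather than $K_2^{-1}$, so that no spurious factors of $\mu_k$ or $\lambda_m$ appear and the $|\tau_k|\sigma$ term is seen to come solely from the $O(\sigma)$ discrepancy between $(S_2\varphi,S_2\psi)$ and $(S\varphi,S\psi)$ in the normalization.
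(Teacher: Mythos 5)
Your overall strategy does match the paper: introduce $K_1,K_2$, use $S_2\varphi$ for $\varphi\in X_m$ as approximate eigenvectors of $K_2$, reduce to a $J_m\times J_m$ generalized eigenvalue problem in $X_m$, and compare eigenvalues. The paper carries out the reduction as a Lyapunov--Schmidt decomposition $U=S_2\varphi+w$ with $w$ in the orthogonal complement of $S_2X_m$ in $H_2$, solving the complementary equation through the resolvent bound (\ref{7.8}); you propose instead to compress $K_2$ onto its spectral subspace $Z_m$ associated with $\mu_1,\ldots,\mu_{J_m}$. These two reductions are equivalent once one knows, via (\ref{0.1}), that $Z_m$ and $S_2X_m$ are within $O(\sigma^{1/2})$ of each other, so that difference is cosmetic.

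There is, however, a genuine error in your account of where the $O(|\tau_k|\sigma)$ term comes from. You attribute it to an ``$O(\sigma)$ discrepancy between $(S_2\varphi,S_2\psi)$ and $(S\varphi,S\psi)$,'' but in this part of the paper $S$ \emph{is} $S_2$, so these are the same quantity and there is no such discrepancy; and if you intended $(\varphi,\psi)$ in place of $(S\varphi,S\psi)$, the discrepancy is only $O(\sigma^{1/2})$ by (\ref{4.2a}), which would give the weaker error $O(|\tau_k|\sigma^{1/2})$. The actual mechanism in the paper is different: the reduced finite-dimensional equation (\ref{Sept27d}) carries $\widehat{\tau}\bigl((S\varphi,S\psi)+(\Psi_\varphi,\Psi_\psi)\bigr)$ on the left rather than $\widehat{\tau}(S\varphi,S\psi)$, and the extra term $(\Psi_\varphi,\Psi_\psi)$ is $O(\sigma)(\|S\varphi\|^2+\|S\psi\|^2)$ by (\ref{Sept24b}) and (\ref{Okt12a}); discarding it so as to match the pencil (\ref{Eig19av1aaz}) is exactly what costs $O(|\tau_k|\sigma)$. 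Separately, you leave to ``expansion'' the identity that $(B\varphi,S_2\psi)$ unfolds into precisely the four terms of (\ref{Eig19av1aaz}) (this is (\ref{Feb20ab})) and that $(Q_mB\varphi,B\psi)$ has leading part $\lambda_m^{-2}(\Psi_\varphi,\Psi_\psi)$, via the representation $B\varphi=K_2\Phi_\varphi-\lambda_m^{-1}\Psi_\varphi$ of (\ref{Sept24a}) together with the estimate chain (\ref{Sept24b})--(\ref{Sept24e}). You correctly single this out as the delicate step, but it is not supplied by generic resolvent bounds: it is an explicit algebraic computation, and without it the plan does not show why exactly those four terms appear at first order and everything else is $O(\rho)$.
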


In the case when $\Omega_2\subset\Omega_1$, the function $\Psi_\varphi$ vanishes and
the eigenvalue problem (\ref{Eig19av1aaz}) takes the form:
\begin{equation}\label{Eig19av1aaz1}
-\frac{1}{\lambda_m}(T\varphi,T\psi)  =\tau (S\varphi , S\psi )
\;\;\;\mbox{for all $\psi\in X_m$.}
\end{equation}
If $\Omega_1\subset\Omega_2$, then $T\varphi=0$ and (\ref{Eig19av1aaz}) can be
written as follows:
\begin{equation}\label{Eig19av1aaz2}
\frac{1}{\lambda_m}(\Psi_\varphi,\Psi_\psi)  =\tau (S\varphi ,
S\psi ) \;\;\;\mbox{for all $\psi\in X_m$.}
\end{equation}

Similar theorems were proved in \cite{K1}\footnote{Two terms are lost in formula
(5), \cite{K1}, compare with (\ref{Sept20dz}) and (\ref{Eig19av1aaz}). However, if
$\Omega_2\subset\Omega_1$ or $\Omega_1\subset\Omega_1$, then formula (5), \cite{K1},
is true.} and \cite{KN}, but here, the main novelty is the use of the small
parameter $\sigma$ which makes the present theorem applicable to a substantially
larger class of perturbations. The importance of this parameter, that serves as a
distance in the set of all closed subspaces of $\W^{1,2}(D)$, lies in the fact that
this set is compact with respect to this distance (see Proposition \ref{PrFeb18a}).
As a result one obtain solutions of various optimization problems for functionals
that are continuous with respect to this distance. Another new point is that the
remainder in (\ref{Eig19av1aaz}) has the form, in which the small parameter $\sigma$
appears explicitly together with the quantities involved in the finite dimensional
spectral problem (\ref{Eig19av1aaz}).

Let us describe some applications of the asymptotic formula (\ref{Eig19av1aaz}). We
begin with the case when both $\Omega_1$ and $\Omega_2$ are Lipschitz domains with
the Lipschitz constant less than or equal to $C_*$ or, what is the same, uniformly
Lipschitz (see \cite{Ken} and \cite{He}, where the definitions of these notions are
given). We assume that
\begin{equation}\label{Intr19b}
\Omega_\varepsilon=\{x\in\Omega_1\,,\,\dist(x,\partial\Omega_1)
>\varepsilon\}\subset\Omega_2\subset\Omega_\varepsilon^+=\{x\in\Bbb
R^n\,,\,\dist(x,\Omega_1)<\varepsilon\},
\end{equation}
where $\varepsilon$ is a small positive parameter. It is supposed that the constant
$C_*$ is independent of $\varepsilon$. Then there exists a set $S$, subject to the
conditions
\[ \Omega_1\setminus\Omega_{2\varepsilon}\subset
S\subset\Omega_1 \quad \mbox{and} \quad |S|\leq C_1|\Omega_2\setminus\Omega_1| , \]
and such that the following inequality holds:
\begin{equation}\label{Intr19c}
\big |\mu_k^{-1}-\lambda_m^{-1}\big |\leq C_1\max_{\varphi\in
X_m,||\varphi||=1}\int_{(\Omega_1\setminus\Omega_2)\cup S} |\nabla \varphi|^2dx,\;\;
k=1,\ldots,J_m.
\end{equation}
Here $|S|$ is the surface measure of $S$ and the constants $C_1$ and $C_2$ do not
depend on $\varepsilon$. If $\Omega_2\subset\Omega_1$, then one has that $$ \big
|\mu_k^{-1}-\lambda_m^{-1}\big |\geq c\min_{\varphi\in
X_m,||\varphi||=1}\int_{(\Omega_1\setminus\Omega_2)} |\nabla \varphi|^2dx,\;\;
k=1,\ldots,J_m, $$ where the integral is the same as in the right-hand side of
(\ref{Intr19c}). If we assume that the eigenfunctions $\varphi_k$, $k=1,\ldots,J_m$,
belong to $W^{1,p}(\Omega_1)$ for some $p\in (2,\infty]$ then formula
(\ref{Intr19c}) implies that
\begin{equation}\label{Intr19d}
\big |\mu_k^{-1}-\lambda_m^{-1}\big |\leq
C|\Omega_1\bigtriangleup\Omega_2|^{1-\frac{2}{p}},
\end{equation}
where $\Omega_1\bigtriangleup\Omega_2$ denotes the symmetric difference of
$\Omega_1$ and $\Omega_2$. Estimate (\ref{Intr19d}) is equivalent to (1.8),
\cite{BL}, but the assumptions imposed on $\Omega_1$ and $\Omega_2$ are weaker
here; in particular, all egenfunctions are required to belong to
$W^{1,p}(\Omega_1)$ in \cite{BL}.

Let $\Omega_1$ be a bounded domain, $x_0\in\partial\Omega_1$ and
$\varepsilon>0$. We assume that
\begin{equation}\label{IntrJan26a}
\Omega_1\setminus
B_\varepsilon(x_0)\subset\Omega_2\subset\Omega_1\cup
B_\varepsilon(x_0)=\Omega_\varepsilon^+(x_0),
\end{equation}
where $B_\varepsilon(x_0)$ is the ball of the radius $\varepsilon$ centered at
$x_0$. Let also for all $u\in W^{1,2}(B_{q\varepsilon(x_0)})$ such that $u=0$ on
$B_{q\varepsilon}(x_0)\setminus\Omega_\varepsilon^+(x_0)$ the following inequality
\begin{equation}\label{Jan16a}
\int_{\Omega_\varepsilon^+(x_0)\cap
B_{q\varepsilon(x_0)}}|u|^2dx\leq C\varepsilon^2
\int_{\Omega_\varepsilon^+(x_0)\cap B_{q\varepsilon}(x_0)}|\nabla
u|^2dx
\end{equation}
hold with some $q>1$ and a constant $C$ independent of $\varepsilon$. Then it is
proved in Sect.\ref{SectLoca} that $$
 \big
|\mu_k^{-1}-\lambda_m^{-1}\big |\leq C\max_{\varphi\in
X_m,||\varphi||=1}\int_{\Omega_1\setminus
B_{q\varepsilon}(x_0)}|\nabla\varphi|^2dx,\;\; k=1,\ldots,J_m.
$$

In Sect.\ref{SectGla}, we consider perturbations satisfying (\ref{Intr19b}).
Assuming that for some $q>1$ inequality (\ref{Jan16a}) holds for all
$x_0\in\partial\Omega_1$, we prove the following estimate for the perturbed
eigenvalues: $$
 \big
|\mu_k^{-1}-\lambda_m^{-1}\big |\leq C\max_{\varphi\in
X_m,||\varphi||=1}\int_{\Omega_1\setminus
\Omega_{q\varepsilon}}|\nabla\varphi|^2dx,\;\; k=1,\ldots,J_m.
$$

Notice that the Sobolev space $\W^{1,2}(\Omega_k)$ can be considered as a subspace
of a similar Sobolev space in a larger domain. Thus, we present an abstract approach
for comparison of eigenvalues and eigenfunctions of operators acting in different
subspaces of a certain Hilbert space in Sections \ref{Sect1}--\ref{SectTh}. There,
in order to measure the proximity of two subspaces, we introduce a distance $\sigma$
as a norm of two projectors onto these subspaces and show that the convergence with
respect to this distance is equivalent to the Mosco convergence. In Sect.
\ref{Sect1}, we formulate a proposition about the closeness of eigenvalues and
eigenfunctions of two eigenvalue problems, see Proposition \ref{Pr1}. In the same
section, we present the main asymptotic theorem, see Theorem \ref{Th1}. Proposition
\ref{Pr1} and Theorem \ref{Th1} are proved in in Sections \ref{S4} and \ref{SectTh},
respectively. In Sect. \ref{SectApl}, we apply our asymptotic formula to eigenvalues
of the Dirichlet problem for a second-order differential operator. In particular, we
consider local and global perturbations of the boundary; the case of uniformly
Lipschitz boundaries is also treated. A class of domain perturbations, for which the
quantity $\sigma$ is small, is described for both cases; a stability estimate, which
evaluates eigenvalues by integrals of squares of the gradient of eigenfunctions, is
also presented.

\section{Perturbation of eigenvalues. Abstract
version.}\label{Sect1}

\subsection{Statement of the perturbation problem}\label{SectFeb15a}
Here we present an abstract approach for study of perturbation of
eigenvalues to the spectral problems from Introduction keeping the
same notations.

Let $H$ and ${\cal H}$ be Hilbert spaces with inner products
$(\cdot,\cdot )$ and $\langle\cdot,\cdot \rangle$ and with
corresponding norms  $||\cdot ||$ and $\mid \cdot \mid$
respectively. We suppose that $H$ is compactly  imbedded in ${\cal
H}$. This implies existence of $c_0>0$ such that
\begin{equation}\label{4.1a}
\mid u\mid\leq c_0 ||u||,\;\;\;\mbox{for $u\in H$}.
\end{equation}

Let $H_1$ and $H_2$ be two subspaces of $H$ of infinite dimension.
We introduce the operators $K_j :H_j\to H_j$ by $(K_ju,v)=\langle
u,v\rangle $, where $u,\,v\in H_j$, $j=1,2$. One can check that the
operators $K_1$ and $K_2$ are self-adjoint, positive definite and
compact. Let ${\cal H}_j$, $j=1,2$, be the closure of $H_j$ in the
space ${\cal H}$. From the definition of $K_j$, $j=1,2,$ it follows
that the operator can be extended to ${\cal H}_j$ and
$$
||K_j||_{{\cal H}_J\to H_j}\leq c_0,
$$
where $c_0$ is the constant in (\ref{4.1a}).

We consider two spectral problems
\begin{equation}\label{1.1}
K_1\varphi=\lambda^{-1}\varphi,\;\;\varphi\in H_1,
\end{equation}
and
\begin{equation}\label{1.2}
K_2U=\mu^{-1}U,\;\;U\in H_2,
\end{equation}
We denote by $\lambda^{-1}_j$, $j=1,\ldots$,  eigenvalues of
$K_1$, numerated according to $0<\lambda_1<\lambda_2<\cdots$, and
by $X_j$ corresponding eigenspaces. We put $J_j=\dim X_j$.   Our
goal is to study  eigenvalues of (\ref{1.2}) located in a
neighborhood of $\lambda_m$ for certain fixed $m$.\footnote{We
note that the spectral problem (\ref{1.1}) and (\ref{TTN1}), and
also (\ref{1.2}) and (\ref{TTN2}) have the same eigenvalues and
eigenvectors.}

 We denote by $S_j$, $j=1,2,$ the orthogonal projector in $H$ with the image $H_j$.
 We will measure the proximity
between  $H_1$ and $H_2$ by the constant $\sigma=\sigma(H_1,H_2)$
in the inequality
\begin{equation}\label{1.1azz}
\mid (S_1-S_2)u \mid^2 \leq \sigma ||u||^2,\;\; u\in H.
\end{equation}
The quantity $\sigma(H_1,H_2)$ is a distance in  ${\cal S}(H)$-the
space of all closed subspaces of $H$. In the next proposition we
prove that the Mosco convergence of subspaces (see Sect. 2.3.3 in
\cite{He} or Sect.4.5 in \cite{B}) is equivalent to convergence of
subspaces with respect to the distance $\sigma$.

\begin{prop}\label{PrFen16a} Let $H_j$, $j=1,\ldots$, and $H_*$ be subspaces in $H$
and let $S_j$ and $S_*$ be corresponding orthogonal projectors.
the following assertions are equivalent:

\noindent {\rm (i)} for all $u\in H$, $S_ju\to S_*u$ as
$j\to\infty$;

\noindent {\rm (ii)} $\sigma (H_j,H_*)\to 0$ as $j\to\infty$.
\end{prop}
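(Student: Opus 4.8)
The plan is to prove the equivalence by showing (ii) $\Rightarrow$ (i) directly from the definition of $\sigma$, and then proving (i) $\Rightarrow$ (ii) by a compactness/contradiction argument, since pointwise convergence of projectors does not by itself yield a uniform bound without using compactness of the imbedding $H\hookrightarrow{\cal H}$.

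For (ii) $\Rightarrow$ (i): fix $u\in H$. By (\ref{1.1azz}) we have $\mid(S_j-S_*)u\mid^2\le\sigma(H_j,H_*)\,||u||^2\to 0$, so $S_ju\to S_*u$ in ${\cal H}$. To upgrade this to convergence in $H$, I would use that $w_j:=S_ju-S_*u$ is bounded in $H$ (since orthogonal projectors have norm $\le 1$), hence has a weakly convergent subsequence in $H$; by the compact imbedding the weak $H$-limit must agree with the strong ${\cal H}$-limit, which is $0$. To get strong convergence in $H$ one notes that $S_ju\perp (H_j)^\perp$ and writes $||S_ju-S_*u||^2 = ||S_ju||^2 - 2\,{\rm Re}\,(S_ju,S_*u) + ||S_*u||^2$; since $(S_ju,S_*u)=(S_ju, u) - (S_ju,(I-S_*)u)$ and $S_ju\to S_*u$ weakly in $H$ while being bounded, each term converges appropriately so that $||S_ju||\to||S_*u||$ together with weak convergence yields $S_ju\to S_*u$ strongly in $H$. (If the paper's notion of Mosco convergence only requires ${\cal H}$-convergence of $S_ju$, the first line already suffices and the rest is unnecessary; I would check which statement is intended.)

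For (i) $\Rightarrow$ (ii): suppose $\sigma(H_j,H_*)\not\to 0$. Then there is $\delta>0$, a subsequence (still denoted $j$), and unit vectors $u_j\in H$ with $\mid(S_j-S_*)u_j\mid^2\ge\delta$. Passing to a further subsequence, $u_j\rightharpoonup u$ weakly in $H$, and by the compact imbedding $u_j\to u$ strongly in ${\cal H}$. The aim is then to show $\mid(S_j-S_*)u_j\mid\to 0$, contradicting the lower bound. Write $(S_j-S_*)u_j = (S_j-S_*)(u_j-u) + (S_j-S_*)u$; the second term tends to $0$ in ${\cal H}$ by hypothesis (i) applied to the fixed vector $u$. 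For the first term I would expand $\mid(S_j-S_*)(u_j-u)\mid^2$ and use that $S_j, S_*$ are bounded on $H$ together with the fact that $u_j-u\rightharpoonup 0$ weakly in $H$; since each $S_j$ and $S_*$ is continuous $H\to H$ and the imbedding $H\to{\cal H}$ is compact, bounded sequences stay in a relatively ${\cal H}$-compact set, so $\mid S_j(u_j-u)\mid$ and $\mid S_*(u_j-u)\mid$ can be controlled. The technical point is that $S_j$ varies with $j$, so one cannot simply invoke compactness of a single operator; instead I would argue that $\{S_j(u_j-u)\}$ is bounded in $H$, hence precompact in ${\cal H}$, extract a convergent subsequence with ${\cal H}$-limit $g$, and show $g=0$ by testing against arbitrary $v\in H$: $\langle S_j(u_j-u), v\rangle = (K_j^{-1}\cdot)$-type manipulations, or more simply use that for fixed $v$, $S_jv\to S_*v$ so $(S_j(u_j-u),S_jv) = (u_j-u, S_jv)\to (0, S_*v)=0$, and combine with weak convergence.

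The main obstacle is precisely this last point in (i) $\Rightarrow$ (ii): handling the $j$-dependent projector $S_j$ acting on the $j$-dependent sequence $u_j-u$. The clean way around it is to observe that $\mid (S_j-S_*)(u_j-u)\mid \le \mid S_j(u_j-u)\mid + \mid S_*(u_j-u)\mid$, that $\mid S_*(u_j-u)\mid\to 0$ because $S_*$ is a fixed continuous operator $H\to H$ followed by the compact imbedding, and that $\mid S_j(u_j-u)\mid^2 = (S_j(u_j-u), u_j-u) = \langle K_j S_j(u_j-u), \cdot\rangle$-free — rather, $\mid S_j(u_j-u)\mid^2$ need not be small, so one genuinely needs the interplay of (i) and compactness. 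I would therefore set $w_j := S_j(u_j - u)$, note $||w_j||\le ||u_j-u||\le 2$, extract $w_j\to w$ in ${\cal H}$ (compact imbedding), $w_j\rightharpoonup w$ in $H$, and identify $w$: for any $v\in H$, $(w_j, S_jv) = (u_j-u, S_jv)$; since $S_jv\to S_*v$ in $H$ and $u_j-u\rightharpoonup 0$ in $H$ we get $(u_j-u,S_jv)\to 0$, while $(w_j,S_jv) - (w_j, S_*v)\to 0$ because $\mid S_jv - S_*v\mid$ controls nothing in $H$ — here I instead use $(w_j, S_jv)=\langle\cdot\rangle$... . The honest resolution: $(w_j, S_jv) = (w_j,v)$ since $w_j\in H_j$, and $(w_j,v)\to(w,v)$ by weak convergence; also $(w_j,v)=(u_j-u,S_jv)$, and since $\{S_jv\}$ is bounded in $H$ with $S_jv\to S_*v$ strongly, $(u_j-u,S_jv)\to 0$. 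Hence $(w,v)=0$ for all $v\in H$, so $w=0$, so $w_j\to 0$ in ${\cal H}$, i.e. $\mid S_j(u_j-u)\mid\to 0$. This closes the contradiction and completes the proof.
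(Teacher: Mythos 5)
Your argument is correct, but it follows a genuinely different route from the paper's. For (i) $\Rightarrow$ (ii) the paper is direct and quantitative rather than by contradiction: it introduces the orthonormal basis $\{v_k\}$ of $H$ formed by eigenvectors of the compact embedding, satisfying $(v_k,w)=s_k\langle v_k,w\rangle$ with $s_k\to\infty$, notes that $w_k=\sqrt{s_k}\,v_k$ is an orthonormal basis of ${\cal H}$, and expands $(S_j-S_*)u$ in this basis to obtain
\begin{equation*}
\sigma(H_j,H_*)\le\inf_N\Bigl(\sum_{k=1}^{N}s_k^{-1}\,||(S_j-S_*)v_k||^2+s_{N+1}^{-1}\Bigr),
\end{equation*}
which tends to zero since each $||(S_j-S_*)v_k||\to 0$ by (i) and $s_{N+1}\to\infty$ as $N\to\infty$. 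Your soft compactness/contradiction argument --- extracting bad unit vectors $u_j$, passing to weak limits, splitting $(S_j-S_*)u_j$, and showing the weak $H$-limit of $S_j(u_j-u)$ is zero via $(S_j(u_j-u),v)=(u_j-u,S_jv)\to 0$ --- reaches the same conclusion but yields no rate; the paper's explicit bound is both shorter and more informative. For (ii) $\Rightarrow$ (i) the two arguments are essentially parallel: both first show $(S_j-S_*)u\rightharpoonup 0$ weakly in $H$ (the paper via $((S_j-S_*)u,v_k)=s_k\langle(S_j-S_*)u,v_k\rangle$, density, and the uniform bound $||S_j-S_*||\le 1$; you via boundedness, weak compactness of $H$, and uniqueness of the ${\cal H}$-limit), and then both upgrade to strong $H$-convergence by expanding $||(S_j-S_*)u||^2$ using that $S_j$ and $S_*$ are orthogonal projectors. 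One remark on exposition: the (i) $\Rightarrow$ (ii) portion of your writeup reads as several abandoned drafts ending with an ``honest resolution''; that final paragraph is the argument that actually works and should simply replace the tentative material preceding it.
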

\begin{proof} We denote by $0<s_1\leq s_2\leq\cdots$,  eigenvalues of
 problem (\ref{1.1}), when $H_1=H$. Here, we numerate them
 accounting  their multiplicities. Due to the compactness
 of the imbedding $H\subset {\cal H}$, $s_k\to\infty$ as
 $k\to\infty$. We denote by $v_1,\,v_2,\ldots$, corresponding
 eigenvectors which form an orthogonal basis in $H$. We normalize
 them according to $||v_k||=1$. It is clear that
 $$
(v_k,w)=s_k\langle v_k,w\rangle\;\;\;\mbox{for all $w\in H$.}
 $$
Therefore, the vectors $w_k=\sqrt{s_k}v_k$, $k=1,2,\ldots$, form the
orthogonal basis in ${\cal H}$ subject to $\mid w_k\mid=1$.

Let us show that (i) implies (ii). We have
$$
(S_j-S_*)u=\sum_{k=1}^N s_k^{-1/2}
(u,(S_j-S_*)v_k)w_k+\sum_{k=N+1}^\infty s_k^{-1/2}
((S_j-S_*)u,v_k)w_k.
$$
Therefore,
$$
\mid(S_j-S_*)u\mid^2 \leq\Big (\sum_{k=1}^N
s_k^{-1}||(S_j-S_*)v_k||^2+s_{N+1}^{-1}\Big )||u||^2.
$$
this implies
$$
\sigma (H_j,H_*)\leq \inf_N\Big (\sum_{k=1}^N
s_k^{-1}||(S_j-S_*)v_k||^2+s_{N+1}^{-1}\Big ).
$$
The right hand side in the last inequality tends to zero when
$j\to\infty$ because of strong convergence of $S_j$ to $S_*$.

Let us prove the implication (ii) $\Rightarrow$ (i). For $u\in H$,
we have
$$
((S_j-S_*)u,v_k)=s_k\langle (S_j-S_*)u,v_k\rangle \rightarrow
0\;\;\;\mbox{as $j\to\infty$}
$$
for $k=1,2,\ldots$ This implies that $((S_j-S_*)u,v)\to 0$ as
$j\to\infty$ for all $v\in H$. Therefore,
\begin{eqnarray*}
&&((S_j-S_*)u,(S_j-S_*)u)=(S_ju,u)-(S_*u,S_ju)-(S_ju,S_*u)+(S_*h,S_*u)\\
&&\rightarrow
(S_*u,u)-(S_*u,S_*u)-(S_*u,S_*u)+(S_*h,S_*u)=0\;\;\;\mbox{as
$j\to\infty$}.
\end{eqnarray*}
The proof is complete.
\end{proof}

Using equivalence of $\gamma$ convergence of domains and strong
convergence of corresponding operators (see Theorem 2.3.10 in
\cite{H}), we derive from the previous assertion the following
\begin{kor}
Let $\Omega_j$, $j=1,\ldots,$ and $\Omega_*$ be domains belonging
to a bounded domain $D$. If $\Omega_j$ $\gamma$-converges to
$\Omega_*$ then $\sigma (H_j,H_*)\to 0$ as $j\to\infty$, where
$H_j=\W^{1,2}(\Omega_j)$ and $H_*=\W^{1,2}(\Omega_*)$.
\end{kor}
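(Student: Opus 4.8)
The plan is to reduce the corollary to Proposition \ref{PrFen16a} by showing that $\gamma$-convergence of the domains $\Omega_j$ to $\Omega_*$ translates precisely into the strong convergence of the projectors $S_j$ appearing in condition (i) of that proposition. The starting observation is that, since all the domains $\Omega_j$ and $\Omega_*$ are contained in a fixed bounded domain $D$, we may regard every $\W^{1,2}(\Omega_j)$ and $\W^{1,2}(\Omega_*)$ as a closed subspace of the single Hilbert space $H=\W^{1,2}(D)$ (functions extended by zero), so that the orthogonal projectors $S_j, S_*$ onto these subspaces are well defined and the distance $\sigma(H_j,H_*)$ makes sense. The goal is then simply to verify the hypothesis of Proposition \ref{PrFen16a} and read off conclusion (ii).

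The key step is to identify the notion of $\gamma$-convergence with the strong operator convergence of the resolvent-type operators attached to the domains. By the cited Theorem 2.3.10 in \cite{H}, $\Omega_j$ $\gamma$-converges to $\Omega_*$ if and only if, for every $f$ (say in $L^2(D)$ or in $H^{-1}(D)$), the solutions $u_j\in\W^{1,2}(\Omega_j)$ of the Dirichlet problem with right-hand side $f$ converge strongly in $\W^{1,2}(D)$ (after extension by zero) to the solution $u_*\in\W^{1,2}(\Omega_*)$. I would then relate this solution operator to the orthogonal projector $S_j$: for the inner product $(\cdot,\cdot)$ on $H$, the function $S_j g$ is characterized by $(S_j g, v) = (g,v)$ for all $v\in H_j$, which is exactly the weak formulation of the Dirichlet problem in $\Omega_j$ with data determined by $g$. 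Hence $S_j g\to S_* g$ strongly in $H$ for every $g\in H$ is literally a restatement of $\gamma$-convergence (possibly after noting that the class of data $g\in\W^{1,2}(D)$ is enough, or that it suffices to check convergence on a dense set and use uniform boundedness $\|S_j\|\le 1$).

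Once that identification is in hand, the corollary is immediate: $\gamma$-convergence gives assertion (i) of Proposition \ref{PrFen16a} with this choice of $H$, $H_j$, $H_*$, and the proposition then yields assertion (ii), namely $\sigma(H_j,H_*)\to 0$, which is exactly the claim. I would close by remarking that only the implication "$\gamma$-convergence $\Rightarrow$ strong convergence of projectors" is needed here, so the weakest half of Theorem 2.3.10 in \cite{H} suffices.

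The main obstacle, and the only point requiring care, is the precise bookkeeping in matching the abstract projector $S_j$ with the domain solution operator used in the definition of $\gamma$-convergence: one must check that the two operators genuinely coincide (not merely that both converge), which amounts to recognizing that the orthogonal projection in the energy inner product $(\cdot,\cdot)$ onto $\W^{1,2}(\Omega_j)$ is the harmonic-type extension/restriction operator for the elliptic form, and that the standard definition of $\gamma$-convergence (typically phrased with $L^2$ data and the $K_j$-type resolvents $K_j$, or with $f\equiv 1$) is equivalent to strong convergence tested against all $g\in H$. This is routine but is where the argument could be made sloppy; everything else is a direct appeal to Proposition \ref{PrFen16a}.
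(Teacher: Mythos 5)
Your proposal takes essentially the same route as the paper's one-sentence derivation, which invokes Theorem 2.3.10 of Henrot (cited as \cite{H}, apparently a typo for \cite{He}) to equate $\gamma$-convergence with strong convergence of the corresponding operators and then applies Proposition \ref{PrFen16a}. Your additional discussion --- identifying the orthogonal projector $S_j$ in the energy inner product with the Dirichlet solution operator for the elliptic form, and handling the class of admissible data --- spells out precisely the bridge that the paper leaves implicit.
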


\begin{prop}\label{PrFeb18a} The metric space ${\cal S}(H)$
with the distance $\sigma$ is compact.
\end{prop}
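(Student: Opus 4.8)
The plan is to show that $({\cal S}(H),\sigma)$ is sequentially compact, which suffices since $\sigma$ is a metric. So take an arbitrary sequence of closed subspaces $H_j\subset H$ with orthogonal projectors $S_j$, and aim to extract a subsequence converging to some closed subspace $H_*$ in the $\sigma$-distance. By Proposition \ref{PrFen16a}, this $\sigma$-convergence is equivalent to strong (pointwise) convergence $S_ju\to S_*u$ for all $u\in H$, so it is enough to produce a subsequence along which $S_j$ converges strongly to the orthogonal projector onto some closed subspace.

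The key mechanism is a diagonal argument on the fixed orthonormal basis $\{v_k\}$ of $H$ introduced in the proof of Proposition \ref{PrFen16a} (the eigenvectors of the problem on $H$, normalized by $\|v_k\|=1$). Each $S_j$ is a contraction on $H$, so the vectors $S_jv_k$ lie in the unit ball of the separable Hilbert space $H$; by weak compactness of the ball and a diagonal extraction over $k$, I can pass to a subsequence (still denoted $S_j$) such that $S_jv_k$ converges weakly in $H$ for every $k$. Since $\|S_j\|\le 1$ uniformly and $\{v_k\}$ is a total set, a standard equicontinuity argument upgrades this to: $S_ju$ converges weakly in $H$ for every $u\in H$; call the limit $Lu$. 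Then $L$ is linear with $\|L\|\le 1$. The point where one has to be slightly careful is self-adjointness and idempotency of the limit: from $(S_ju,w)=(u,S_jw)$ one passes to the limit using weak convergence of $S_ju$ and strong use of $S_jw\rightharpoonup Lw$ to get $(Lu,w)=(u,Lw)$, so $L$ is self-adjoint; one then needs $L$ to be a projector, i.e. $L^2=L$. This is the main obstacle, because $L^2=L$ does not follow from weak convergence alone — weak limits of projectors need not be projectors in general.

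To overcome this, the idea is to promote the weak convergence to strong convergence using the compact embedding $H\hookrightarrow {\cal H}$. Arguing exactly as in the proof of Proposition \ref{PrFen16a}: with $w_k=\sqrt{s_k}\,v_k$ an orthonormal basis of ${\cal H}$ and $s_k\to\infty$, one splits $(S_j-L)u$ into its first $N$ coordinates plus a tail, estimates
\[
\mid(S_j-L)u\mid^2\le\Big(\sum_{k=1}^N s_k^{-1}\|(S_j-L)v_k\|^2+s_{N+1}^{-1}\Big)\|u\|^2,
\]
and observes that for fixed $k$, $\|(S_j-L)v_k\|\to 0$: indeed $S_jv_k\rightharpoonup Lv_k$ weakly in $H$, hence $S_jv_k\to Lv_k$ strongly in ${\cal H}$ by compactness, but also $\|S_jv_k\|^2=(S_jv_k,v_k)\to(Lv_k,v_k)=\|Lv_k\|^2$ — wait, this last equality needs $L^2=L$, so instead I use that $\|S_jv_k\|^2=(S_jv_k,S_jv_k)$ and the identity $(S_jv_k,S_jv_k)=(S_jv_k,v_k)\to (Lv_k,v_k)$ since $S_j$ is an idempotent self-adjoint operator; combined with weak convergence $S_jv_k\rightharpoonup Lv_k$ and $\|Lv_k\|^2\le\liminf\|S_jv_k\|^2=(Lv_k,v_k)$, together with $(Lv_k,v_k)\le\|Lv_k\|\,\|v_k\|=\|Lv_k\|$, one gets $\|Lv_k\|\le 1$ and $\|S_jv_k\|\to\|Lv_k\|$ only after confirming $(Lv_k,v_k)=\|Lv_k\|^2$. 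Thus the genuine crux is precisely establishing $(Lu,u)=\|Lu\|^2$, equivalently $L^2=L$.

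The resolution of that crux is the following: once we have $\mid(S_j-L)u\mid\to 0$ for all $u$ (which, granted strong $H$-convergence $S_jv_k\to Lv_k$ on basis vectors, follows from the displayed tail estimate), we can identify $L$ concretely. Let $H_*$ be the closure in $H$ of $\{Lu:u\in H\}$; I claim $L$ is the orthogonal projector onto $H_*$. Self-adjointness is already in hand. For idempotency, fix $u$ and note $S_j(S_ju)=S_ju$; writing $y_j=S_ju$, we have $y_j\rightharpoonup Lu$ weakly and, by the strong ${\cal H}$-convergence just obtained applied with the (bounded) sequence $y_j$ in place of a fixed vector — here one uses that $\{y_j\}$ is bounded in $H$ and $S_j-L\to0$ strongly in ${\cal H}$, so $S_jy_j-Ly_j\to 0$ in ${\cal H}$ — one gets $Ly_j\to Lu$ in ${\cal H}$ as well, while $S_jy_j=y_j\to Lu$ in ${\cal H}$; hence $Ly_j\to Lu$, but also $Ly_j\rightharpoonup L^2u$ weakly in $H$, forcing $L(Lu)=Lu$ in ${\cal H}$ and, since both sides lie in $H$ and $L$ has closed range, $L^2u=Lu$ in $H$. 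Therefore $L$ is a self-adjoint idempotent, i.e. the orthogonal projector $S_*$ onto the closed subspace $H_*$. By Proposition \ref{PrFen16a}, strong convergence $S_ju\to S_*u$ gives $\sigma(H_j,H_*)\to 0$ along the subsequence, establishing sequential compactness and hence compactness of $({\cal S}(H),\sigma)$. The proof is complete.
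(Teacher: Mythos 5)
Your approach tracks the paper's proof closely (diagonal extraction over the basis $\{v_k\}$ to obtain a weak operator limit $L$, which is automatically a self-adjoint contraction), and you are right to flag the point that the paper passes over without argument: nothing in this construction forces $L$ to be idempotent. Unfortunately, the resolution you offer is circular, and you have in fact spotted where: you write that $|(S_j - L)u|\to 0$ for all $u$ follows from the tail estimate ``granted strong $H$-convergence $S_j v_k\to L v_k$ on basis vectors.'' That granted hypothesis is precisely the unavailable step. The tail estimate controls $|(S_j-L)u|^2$ by $\big(\sum_{k\le N}s_k^{-1}\|(S_j-L)v_k\|^2 + O(s_{N+1}^{-1})\big)\|u\|^2$, and to make the first sum small you need norm convergence $\|(S_j-L)v_k\|\to 0$ in $H$, not merely the weak $H$-convergence $S_jv_k\rightharpoonup Lv_k$ (nor its automatic upgrade, via the compact embedding, to ${\cal H}$-norm convergence $|S_jv_k-Lv_k|\to 0$, which does not dominate $(u,(S_j-L)v_k)_H$). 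Turning the weak $H$-convergence into norm convergence requires $\|S_jv_k\|\to\|Lv_k\|$, i.e.\ $(Lv_k,v_k)=\|Lv_k\|^2$, which is exactly $L^2=L$. So the chain ``$|(S_j-L)u|\to 0\Rightarrow S_jy_j-Ly_j\to 0$ in ${\cal H}\Rightarrow L^2u=Lu$'' presupposes the very fact it is meant to establish.

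The gap is not a repairable technicality in the stated generality. Take $\alpha,\beta\in(0,1)$ with $\alpha^2+\beta^2=1$ and set $u_n=\alpha v_1+\beta v_n$, $H_n=\mathrm{span}(u_n)$ for $n\ge 2$. The one-dimensional projectors $S_n$ converge in the weak operator topology to $\alpha^2 P_1$ (here $P_1$ is the projector onto $\mathrm{span}(v_1)$), which is not a projector; and testing with $x=v_n$ gives
\[
|(S_n-S_m)v_n|^2_{\cal H}=\alpha^2\beta^2 s_1^{-1}+\beta^4 s_n^{-1}\ \ge\ \alpha^2\beta^2 s_1^{-1}>0\qquad (n\ne m,\ n,m\ge 2),
\]
so $\{H_n\}$ has no $\sigma$-Cauchy subsequence. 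Thus ${\cal S}(H)$, read literally as the space of all closed subspaces, is not $\sigma$-compact, and neither your argument nor the paper's — which at the corresponding point simply asserts that the bounded sesquilinear limit form is that of an orthogonal projector — can be completed without restricting the class of admissible subspaces (e.g.\ to Sobolev subspaces cut out by domains, where additional geometric structure is available).
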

\begin{proof} Let $S_k$, $k=1,2,\ldots$, be a family of orthogonal
projectors. Let us show that one can choice a convergent
subsequence. Let $v_j$ be the same vectors as in the proof of
Proposition \ref{PrFen16a}. We can choose a subsequence such that
\begin{equation}\label{Feb16a}
(S_kv_i,v_j)\to \alpha_{ij}\;\;\;\mbox{as $k\to\infty$}
\end{equation}
for all $i,j\geq 1$. We used the same index for the subsequence in
(\ref{Feb16a}). Since $||S_k||_{H\to H}=1$, we derive from
(\ref{Feb16a}) that
$$
(S_ku,v)\to \alpha(u,v)\;\;\;\mbox{as $k\to\infty$}
$$
for all $u,v\in H$, where $\alpha(u,v)$ is a certain number. One
can check that the form $\alpha$ is linear with respect to the
first argument and anti-linear with respect to the second one.
Moreover, the form $\alpha$ is bounded. Therefore, $\alpha(u,v)=
(S_*u,v)$, where $S_*$ is an orthogonal projector. Reasoning as in
the proof of Proposition \ref{PrFen16a}(ii), we conclude that
$S_ku\to S_*u$ as $k\to\infty$.
\end{proof}

In what follows we shall use that the orthogonal projectors $S_1$
and $S_2$ posses the following symmetry property:
$$
(S_2v,w)=(v,S_1w)\;\;\;\mbox{for $v\in H_1$ and $w\in H_2$.}
$$

\subsection{Formulation of results}

In what follows we denote by  $P_m$ the orthogonal projector in
$H$ with the image $SX_m$.

\begin{prop}\label{Pr1}
There exists  positive $\sigma_0$,  $c$ and $C$ depending on
$\lambda_1,\ldots,\lambda_{m+1}$ and $c_0$ such that for
$\sigma\leq \sigma_0$ the following assertions are valid:

{\rm (i)} The operator $K_2$ has exactly $J_m$ eigenvalue in
$(1/\lambda_{m+1}+c\sigma^{1/2},1/\lambda_{m-1}-c\sigma^{1/2})$
and all of them are located in
$(1/\lambda_{m}-c\sigma^{1/2},1/\lambda_{m}+c\sigma^{1/2})$.

{\rm (ii)} If $\mu^{-1}$ is an eigenvalue of {\rm (\ref{1.2})}
from the interval
$(1/\lambda_{m}-c\sigma^{1/2},1/\lambda_{m}+c\sigma^{1/2})$ and
$U$ is a corresponding eigenvector then
\begin{equation}\label{0.1}
||U-P_mU||\leq C\sigma^{1/2}||U||.
\end{equation}
\end{prop}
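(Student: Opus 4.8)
The plan is to prove both parts by comparing the compact self-adjoint operators $K_1$ and $K_2$ through their common ambient space $H$, using $\sigma$ to control the discrepancy. First I would extend $K_1$ and $K_2$ to operators on all of $H$ by setting $\widetilde K_j = K_j S_j$ (equivalently, precompose with the orthogonal projector $S_j$); these remain compact and self-adjoint on $H$, have the same nonzero spectrum as the original $K_j$, and $\widetilde K_1$ has $1/\lambda_m$ as an eigenvalue of multiplicity $J_m$ with eigenspace $X_m$. The key estimate to establish is an operator-norm bound of the form
\begin{equation}\label{plan1}
\|(\widetilde K_1 - \widetilde K_2)u\| \leq C\sigma^{1/2}\|u\|,\qquad u\in H.
\end{equation}
Once \eqref{plan1} is in hand, part (i) follows immediately from the standard stability of isolated eigenvalues of compact self-adjoint operators under small perturbations in operator norm (e.g.\ via the spectral projection given by the Riesz integral $\frac{1}{2\pi i}\oint (z - \widetilde K_j)^{-1}\,dz$ around a contour enclosing $1/\lambda_m$ and separating it from $1/\lambda_{m\pm 1}$): the total multiplicity of $\widetilde K_2$'s spectrum inside that contour equals $J_m$, and all those eigenvalues lie within $C\sigma^{1/2}$ of $1/\lambda_m$, with the $\sigma^{1/2}$ (rather than $\sigma$) coming precisely from \eqref{plan1}.

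To prove \eqref{plan1}, I would write, for $u\in H$, $\widetilde K_1 u - \widetilde K_2 u = K_1 S_1 u - K_2 S_2 u$ and test against an arbitrary $w\in H$. Using the defining relations $(K_j v, w) = \langle v, w\rangle$ for $v,w\in H_j$ together with the symmetry property $(S_2 v, w) = (v, S_1 w)$ recorded just before the proposition, one can rewrite $(\widetilde K_1 u - \widetilde K_2 u, w)$ in a form where the difference is localized on $(S_1 - S_2)$ applied to $u$ or to the $K_j$-images, and then \eqref{1.1azz} converts an $L^2$-norm of such a difference into $\sigma^{1/2}$ times an $H$-norm; the extra factor $c_0$ from the compact embedding \eqref{4.1a} absorbs the remaining $L^2$-to-$H$ passages. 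This is the step that requires genuine care: one must arrange the algebra so that only \emph{one} factor of $(S_1 - S_2)$ appears (giving $\sigma^{1/2}$, not $\sigma$ or $\sigma^{0}$), and that the other factors are bounded in the right norms uniformly — I expect this bookkeeping with the two inner products and the projector symmetry to be the main obstacle.

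For part (ii), given an eigenvector $U\in H_2$ with $K_2 U = \mu^{-1}U$ and $\mu^{-1}$ in the stated interval, note $\widetilde K_2 U = K_2 S_2 U = K_2 U = \mu^{-1}U$, so $U$ is also an eigenvector of $\widetilde K_2$. I would then use the resolvent of $\widetilde K_1$: since $\mu^{-1}$ is at distance at least $c'>0$ from every eigenvalue of $\widetilde K_1$ other than $1/\lambda_m$ (for $\sigma$ small, by the spacing $\lambda_{m-1}<\lambda_m<\lambda_{m+1}$), the component of $U$ orthogonal to $\ker(\widetilde K_1 - 1/\lambda_m) = X_m$ satisfies
\begin{equation}\label{plan2}
\|U - P_m^{X} U\|\leq \frac{1}{c'}\,\|(\widetilde K_1 - \widetilde K_2)U\|\leq \frac{C}{c'}\,\sigma^{1/2}\|U\|,
\end{equation}
where $P_m^{X}$ is the orthogonal projector onto $X_m$ and I used $(\widetilde K_1 - \mu^{-1})U = (\widetilde K_1 - \widetilde K_2)U$ together with \eqref{plan1}. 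Finally, since $X_m \subset H_1 = \rank S$ and, recalling $P_m$ projects onto $SX_m = X_m$, we have $P_m^{X} = P_m$, so \eqref{plan2} is exactly \eqref{0.1}. The constants $\sigma_0, c, C$ produced this way depend only on the gaps between $\lambda_1,\dots,\lambda_{m+1}$ and on $c_0$, as claimed.
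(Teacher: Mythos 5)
You take a genuinely different route from the paper. The paper proves part (i) by min-max comparisons of the quadratic form $(K_2 u, u)$ on well-chosen subspaces (the orthogonal complement ${\cal Y}_m$ of $S_2{\cal X}_m$ in $H_2$ gives the upper count, $S_2{\cal X}_m$ the lower), using the form inequalities (\ref{4.1}), (\ref{4.2a}), (\ref{4.3}), (\ref{4.6}); it proves part (ii) by showing that $Q_m K_2 - \mu$ has a uniformly bounded inverse on $Q_m H_2$ via a block decomposition $Q_m H_2 = Y_m\oplus{\cal Y}_m$, then writing $U = S_2\varphi_m + w$. Your plan --- extending $K_j$ to the self-adjoint compact operators $\widetilde K_j = K_j S_j$ on all of $H$ and proving $||\widetilde K_1 - \widetilde K_2|| \leq 2c_0\sigma^{1/2}$ --- is structurally cleaner: for (i), Weyl's inequality for compact self-adjoint operators immediately localizes eigenvalues within $C\sigma^{1/2}$ and preserves multiplicity, which is exactly assertion (i), and for (ii) a resolvent bound on the orthogonal complement of $X_m$ replaces the paper's block system. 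Your key estimate (plan1) does go through, in fact very cleanly: $(\widetilde K_1 u - \widetilde K_2 u, w) = \langle S_1 u, S_1 w\rangle - \langle S_2 u, S_2 w\rangle = \langle S_1 u, (S_1-S_2)w\rangle + \langle (S_1-S_2)u, S_2 w\rangle$, so (\ref{1.1azz}) and (\ref{4.1a}) yield exactly one factor of $\sigma^{1/2}$.

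There is, however, one substantive slip in your final step for (ii). In the paper's abstract notation $S$ stands for $S_2$ (see the proof of (\ref{4.1}), where $\varphi_0 = S_2\varphi$ and $||S\varphi||^2 = ||\varphi_0||^2$, and the decomposition $U = S_2\varphi + w$, $w\in Q_m H_2$). Thus $P_m$ projects onto $S_2 X_m\subset H_2$, not onto $X_m\subset H_1$; these subspaces differ, and their gap is only $O(\sigma^{1/4})$, since (\ref{4.1}) gives $||T_2\varphi||^2 = ||\varphi||^2 - ||S_2\varphi||^2\leq\Lambda_m\sigma^{1/2}||\varphi||^2$, not $O(\sigma)$. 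So your identification $P_m^X = P_m$ is false, and a naive triangle inequality through the subspace gap would degrade (\ref{0.1}) to $O(\sigma^{1/4})$. Your bound nevertheless does imply the stated one, by a short observation you omitted: put $\varphi := P_m^X U\in X_m$; since $U\in H_2$ one has $S_2 U = U$, hence $||U - S_2\varphi|| = ||S_2(U-\varphi)||\leq ||U-\varphi||\leq C\sigma^{1/2}||U||$, and since $S_2\varphi\in S_2 X_m$ this gives $||U - P_m U||\leq||U - S_2\varphi||\leq C\sigma^{1/2}||U||$, which is (\ref{0.1}). The reduction from $P_m^X$ to $P_m$ is therefore not a notational formality and must be supplied.
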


We denote by $\mu_k^{-1}$, $k=1,\ldots,J_m$, the eigenvalues of
the spectral problem {\rm (\ref{1.2})} located in the interval
$(\lambda_m^{-1}-c\sigma,\lambda_m^{-1}+c\sigma)$, where $c$ is
the same positive constant as in {\rm Proposition \ref{Pr1}}. In
order to formulate the main result of this paper let us introduce
some more objects. We put $T_2u=u-S_2u$ and define the vector
$\Psi=\Psi_\varphi\in H_2$ as the solution of equation
(\ref{TTN23feb}).  Let also
\begin{equation}\label{Sept20c}
\rho =\max_{\varphi\in X_m, ||\varphi||=1} \big(\sigma ||
\Psi_\varphi||^2+\mid T\varphi\mid^2+\mid \Psi_\varphi\mid^2\big).
\end{equation}

\begin{sats}\label{Th1}  The
following asymptotic formula holds:
\begin{equation}\label{Sept20d}
\mu_k^{-1}=\lambda_m^{-1}+\tau_k+O(\rho+|\tau_k|\sigma ),\;\;
k=1,\ldots,J_m,
\end{equation}
where $\tau=\tau_k$ is an eigenvalue of the problem
\begin{equation}\label{Eig19av1aa}
\frac{1}{\lambda_m}\Big
((\Psi_\varphi,\Psi_\psi)-(T\varphi,T\psi)-(\Psi_\varphi,\psi)-(\varphi,\Psi_\psi)
\Big ) =\tau (S\varphi , S\psi ) \;\;\;\mbox{for all $\psi\in
X_m$,}
\end{equation}
where $\varphi\in X_m$. Moreover, $\tau_1,\ldots,\tau_m$ in {\rm
(\ref{Sept20d})} run through all eigenvalues of {\rm
(\ref{Eig19av1aa})} counting their multiplicities.
\end{sats}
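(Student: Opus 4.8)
The plan is to reduce the infinite-dimensional spectral problem (\ref{1.2}) to a finite-dimensional one on $X_m$ (or on $SX_m$), and then to analyze that finite-dimensional problem perturbatively. By Proposition \ref{Pr1}, for $\sigma\le\sigma_0$ the operator $K_2$ has exactly $J_m$ eigenvalues $\mu_k^{-1}$ near $\lambda_m^{-1}$, and every corresponding eigenvector $U$ satisfies $\|U-P_mU\|\le C\sigma^{1/2}\|U\|$. So the first step is to write each eigenvector as $U=S_2\varphi+U'$ with $\varphi\in X_m$ and $U'$ a small correction, and to test the eigenvalue equation $(U,V)=\mu^{-1}\langle U,V\rangle$ against $V=S_2\psi$ for $\psi\in X_m$. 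Using the symmetry property $(S_2v,w)=(v,S_1w)$ for $v\in H_1,w\in H_2$, together with $S\varphi=\varphi$ on $X_m$, $T_2\varphi=\varphi-S_2\varphi$, and the defining equation (\ref{TTN23feb}) for $\Psi_\varphi$, I expect the leading terms to reorganize exactly into the bilinear form on the left of (\ref{Eig19av1aa}), with $\mu^{-1}-\lambda_m^{-1}$ playing the role of $\tau$ up to lower-order terms.

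The second step is to control the correction term $U'$. Since $U'=U-P_mU$ lives in the orthogonal complement of $SX_m$, and $K_2$ restricted to that complement has spectrum bounded away from $\lambda_m^{-1}$ (by the gap estimates in Proposition \ref{Pr1}, part (i)), one can solve for $U'$ in terms of $\varphi$ via a Neumann-series/resolvent argument: $U'=R\,(\text{error})$, where $R$ is the relevant resolvent and the error involves $T_2\varphi$ and $\Psi_\varphi$. The size of $U'$ will then be governed by $\mid T\varphi\mid$, $\mid\Psi_\varphi\mid$ and $\sigma^{1/2}\|\Psi_\varphi\|$, i.e.\ by $\rho^{1/2}$, and its feedback into the finite-dimensional equation when tested against $S_2\psi$ produces a term of order $O(\rho+|\tau|\sigma)$. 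This is where the parameter $\sigma$ enters the remainder explicitly: the cross terms between $U'$ and $S_2\psi$, and the discrepancy between $(S_2\varphi,S_2\psi)$ and $(S\varphi,S\psi)$, each carry a factor $\sigma$ in front of quantities already appearing in (\ref{Eig19av1aa}).

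The third step is the matching of the finite-dimensional eigenvalues. Having shown that the $J_m$-tuple $\{\mu_k^{-1}-\lambda_m^{-1}\}$ is, up to $O(\rho+|\tau_k|\sigma)$, the spectrum of the $J_m\times J_m$ pencil (\ref{Eig19av1aa}) (with $(S\varphi,S\psi)$ on the right, which is positive definite on $X_m$ for $\sigma$ small since $S$ is close to the identity there), one invokes a standard perturbation estimate for eigenvalues of symmetric pencils: two symmetric forms that are close in norm have eigenvalues that are close, matched according to multiplicity. This gives (\ref{Sept20d}). The main obstacle I anticipate is the bookkeeping in the second step — keeping every error term explicitly of the stated order $O(\rho+|\tau_k|\sigma)$ rather than a cruder $O(\rho^{1/2})$ or $O(\sigma^{1/2})$, which requires using the eigenvalue equation a second time (a bootstrap) to upgrade the naive bound on $U'$ and on the mismatch terms. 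The identities tying $\Psi_\varphi$ to $T_2\varphi$ through (\ref{TTN23feb}), and the self-adjointness/symmetry of $S_1,S_2$, are what make this cancellation work.
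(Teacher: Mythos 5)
Your plan follows the paper's proof essentially step by step: the paper writes $U=S_2\varphi+w$ with $w\in Q_mH_2$, solves for $w$ via the resolvent bound (\ref{LUD3a}) established in the proof of Proposition~\ref{Pr1}(ii), tests the reduced equation against $S_2\psi$, reorganizes $(B\varphi,S_2\psi)$ and $(Q_mB\varphi,B\psi)$ using (\ref{TTN23feb}) and the symmetry of $S_1,S_2$ to produce the form in (\ref{Eig19av1aa}), and finally matches the two finite-dimensional spectra via a determinant/bijection argument (\ref{Okt12e})--(\ref{Okt12dj}). Your ``standard perturbation estimate for symmetric pencils'' is precisely what that matching argument makes explicit, so the approaches coincide.
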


In Sect. \ref{Sect4} we prove that $||\varphi||^2
-||S\varphi||^2\leq 1-C\sigma^{1/2}$ for $\varphi\in X_m$. This
fact and Theorem \ref{Th1} lead to the following corollaries.

\begin{kor}\label{Kor1} If $H_2\subset H_1$ then $\Psi_\varphi=0$ for all
$\varphi\in X_m$. Therefore, from {\rm (\ref{Eig19av1aa})} it
follows that $|\tau_k|\leq C\rho$ and hence {\rm (\ref{Sept20d})}
implies
$$
c\min_{\varphi\in X_m, ||\varphi||=1} ||T\varphi||^2\leq\big
|\mu_k^{-1}-\lambda_m^{-1}\big |\leq C\max_{\varphi\in X_m,
||\varphi||=1} ||T\varphi||^2,
$$
where $c$ and $C$ are positive constants.
\end{kor}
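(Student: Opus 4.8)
The plan is to obtain the corollary from Theorem \ref{Th1} and the inequality proved in Section \ref{Sect4} by two reductions: eliminating $\Psi_\varphi$, and then analyzing the resulting finite-dimensional eigenvalue pencil. Fix $\varphi\in X_m$. Since $\varphi$ is an eigenfunction of (\ref{TTN1}) (equivalently of (\ref{1.1})), one has $(\varphi,v)=\lambda_m\langle\varphi,v\rangle$ for every $v\in H_1$; because $H_2\subset H_1$ this holds in particular for every $w\in H_2$, so the right-hand side of the equation (\ref{TTN23feb}) defining $\Psi_\varphi$ vanishes identically on $H_2$. Taking $w=\Psi_\varphi$ gives $||\Psi_\varphi||=0$, i.e. $\Psi_\varphi=0$. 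Consequently $\rho=\max_{\varphi\in X_m,\,||\varphi||=1}|T\varphi|^2$, and the pencil (\ref{Eig19av1aa}) reduces to (\ref{Eig19av1aaz1}),
\[
-\frac{1}{\lambda_m}(T\varphi,T\psi)=\tau\,(S\varphi,S\psi)\;\;\;\mbox{for all $\psi\in X_m$.}
\]

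Next I would treat this as a symmetric generalized eigenvalue problem on the finite-dimensional space $X_m$. Its left-hand form $-\lambda_m^{-1}(T\cdot,T\cdot)$ is negative semidefinite, while the right-hand form $(S\cdot,S\cdot)$ is positive definite on $X_m$: this is precisely where the inequality of Section \ref{Sect4} enters, since for small $\sigma$ it bounds $||S\varphi||^2$ from below by a positive multiple of $||\varphi||^2$ on $X_m$ (equivalently, keeps $||T\varphi||^2/||\varphi||^2$ bounded away from $1$). Diagonalizing the pencil, each $\tau_k$ equals $-\lambda_m^{-1}||T\varphi_k||^2/||S\varphi_k||^2$ for a corresponding eigenvector $\varphi_k\in X_m$, so all $\tau_k\le 0$, and the min--max principle together with $||S\varphi||\le||\varphi||$ gives
\[
\frac{1}{\lambda_m}\min_{\varphi\in X_m,\,||\varphi||=1}||T\varphi||^2\le\min_k|\tau_k|\le\max_k|\tau_k|\le\frac{C}{\lambda_m}\max_{\varphi\in X_m,\,||\varphi||=1}||T\varphi||^2 .
\]
Moreover $\varphi\in H_1$ forces $S_1\varphi=\varphi$, so $T\varphi=\varphi-S_2\varphi=(S_1-S_2)\varphi$, and then the inequality (\ref{1.1azz}) defining $\sigma$ yields $|T\varphi|^2\le\sigma||\varphi||^2$; hence in the present case $\rho\le\sigma$, and together with the Section \ref{Sect4} estimate for $||T\varphi||^2$ this gives the bound $|\tau_k|\le C\rho$ asserted in the statement.

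Finally I would substitute $|\tau_k|\le C\rho$ into the asymptotic formula (\ref{Sept20d}): the remainder is then $O(\rho+|\tau_k|\sigma)=O(\rho)$ for $\sigma$ small, so $|\mu_k^{-1}-\lambda_m^{-1}|\le|\tau_k|+C\rho\le C'\rho\le C''\max_{||\varphi||=1}||T\varphi||^2$, using $|T\varphi|\le c_0||T\varphi||$ from (\ref{4.1a}); this is the upper estimate. For the lower estimate one writes $|\mu_k^{-1}-\lambda_m^{-1}|\ge|\tau_k|-C\rho$ and must absorb $C\rho$ into a fixed fraction of $|\tau_k|$, using $\min_k|\tau_k|\ge\lambda_m^{-1}\min_{||\varphi||=1}||T\varphi||^2$ from the display above; this yields $|\mu_k^{-1}-\lambda_m^{-1}|\ge c\min_{||\varphi||=1}||T\varphi||^2$.

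I expect the absorption in the last step to be the main obstacle, since it requires $\rho$ — which the definition of $\sigma$ only bounds by $\sigma$ — to be in fact dominated by $\min_{||\varphi||=1}||T\varphi||^2$, and it is exactly here that the precise, sharpened form of the Section \ref{Sect4} inequality is indispensable; once that inequality and the pencil (\ref{Eig19av1aaz1}) are available, the rest of the argument is routine.
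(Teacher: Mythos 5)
Your reduction to the pencil $-\lambda_m^{-1}(T\varphi,T\psi)=\tau(S\varphi,S\psi)$ and the Rayleigh-quotient characterization of the $\tau_k$ are exactly the intended route, and the identification $\Psi_\varphi=0$ is correct; the lower estimate $||S\varphi||^2\geq(1-\Lambda_m\sigma^{1/2})||\varphi||^2$ from (\ref{4.1}) is indeed the needed input. But there are two real difficulties. First, your derivation of ``$|\tau_k|\leq C\rho$'' is not valid: combining $\rho\leq\sigma$ with $||T\varphi||^2\leq\Lambda_m\sigma^{1/2}$ only gives $|\tau_k|\leq C\sigma^{1/2}$ and $\rho\leq\sigma\leq\sigma^{1/2}$, which points in the \emph{opposite} direction. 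More to the point, $\rho=\max|T\varphi|^2$ is an $\cal H$-norm quantity while $|\tau_k|\sim||T\varphi||^2$ is an $H$-norm quantity; the imbedding $|u|\leq c_0||u||$ gives $\rho\leq c_0^2\max||T\varphi||^2$, the wrong sense for bounding $|\tau_k|$ by $\rho$. A one-dimensional test $\Omega_1=(-1,1)$, $\Omega_2=(-1+\varepsilon,1)$ with the ground state has $||T\varphi||^2\sim\varepsilon$ but $|T\varphi|^2\sim\varepsilon^2$, so $|\tau_1|\sim\varepsilon$ is not $O(\rho)$. The upper bound of the displayed two-sided estimate is nonetheless correct, but you should reach it directly from $|\mu_k^{-1}-\lambda_m^{-1}|\leq(1+C\sigma)|\tau_k|+C\rho$, $|\tau_k|\leq C\max||T\varphi||^2$ (from the pencil and (\ref{4.1})), and $\rho\leq c_0^2\max||T\varphi||^2$ (from (\ref{4.1a})) -- not via the detour ``$|\tau_k|+C\rho\leq C'\rho$''.

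Second, the lower bound is a genuine gap, which you yourself flag at the end. Absorbing the remainder $C(\rho+|\tau_k|\sigma)$ into a fixed fraction of $|\tau_k|$ requires $\rho\lesssim\min_{||\varphi||=1}||T\varphi||^2$, not merely $\rho\lesssim\max||T\varphi||^2$; none of (\ref{4.1}), the imbedding $|u|\leq c_0||u||$, or $\rho\leq\sigma$ supplies this, and for $J_m>1$ the maximum and minimum of $||T\varphi||^2$ over the unit sphere of $X_m$ need not be comparable. Passing to the remainder (\ref{Sept20dzz}) of the Remark, $O(\sigma_*\rho_*+|\tau_k|\sigma_*)$ with $\rho_*=\max||T\varphi||^2$, reduces the matter to $\sigma_*\max||T\varphi||^2\lesssim\min||T\varphi||^2$, which still has to be argued. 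Since the paper offers no explicit proof of this corollary, your proposal is not deviating from a given argument; rather, the absorption step you identify as ``the main obstacle'' is precisely where the substance of the lower bound lies, and it is left unresolved.
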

\begin{kor}\label{Kor2} Let $S_0$ be the orthogonal projector onto
$H_1\cap H_2$ and let $T_0u=u-S_0u$. Then
$(\Psi_\varphi,\psi)=(\Psi_\varphi,T_0\psi)$ and therefore from
{\rm (\ref{Eig19av1aa})} it follows $|\tau_k|\leq C\rho_0$, where
$$
\rho_0=\max_{\varphi\in X_m, ||\varphi||=1} (|| T_0\varphi||^2+||
\Psi_\varphi||^2).
$$
Consequently,
$$
\big |\mu_k^{-1}-\lambda_m^{-1}\big |\leq C\rho_0
$$
for arbitrary  $H_1$ and $H_2$ subject to {\rm (\ref{1.1azz})}
with small constant $\sigma$.
\end{kor}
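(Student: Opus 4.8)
The plan is to establish the three assertions of the corollary in turn: the identity $(\Psi_\varphi,\psi)=(\Psi_\varphi,T_0\psi)$ for $\varphi,\psi\in X_m$, then the bound $|\tau_k|\le C\rho_0$ for the eigenvalues of (\ref{Eig19av1aa}), and finally the estimate $|\mu_k^{-1}-\lambda_m^{-1}|\le C\rho_0$ as a consequence of Theorem \ref{Th1}.

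\emph{The identity.} I would test the defining relation (\ref{TTN23feb}) of $\Psi_\varphi$ against $w=S_0\psi$; this is admissible since $S_0\psi\in H_1\cap H_2\subset H_2$, and it gives $(\Psi_\varphi,S_0\psi)=(\varphi,S_0\psi)-\lambda_m\langle\varphi,S_0\psi\rangle$. Since moreover $S_0\psi\in H_1$ and $\varphi\in X_m$ is an eigenfunction of (\ref{1.1}), one also has $(\varphi,S_0\psi)=\lambda_m\langle\varphi,S_0\psi\rangle$; hence $(\Psi_\varphi,S_0\psi)=0$, so $(\Psi_\varphi,\psi)=(\Psi_\varphi,\psi-S_0\psi)=(\Psi_\varphi,T_0\psi)$. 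Taking conjugates gives likewise $(\varphi,\Psi_\psi)=(T_0\varphi,\Psi_\psi)$.

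\emph{The bound $|\tau_k|\le C\rho_0$.} Using the two identities, the left-hand side of (\ref{Eig19av1aa}) becomes the Hermitian bilinear form
\[
B(\varphi,\psi)=\frac{1}{\lambda_m}\Big((\Psi_\varphi,\Psi_\psi)-(T\varphi,T\psi)-(\Psi_\varphi,T_0\psi)-(T_0\varphi,\Psi_\psi)\Big),\qquad\varphi,\psi\in X_m.
\]
The only term still carrying $T=T_2$ is harmless: since $S_0\varphi\in H_1\cap H_2\subset H_2$ and $S_2\varphi$ is the element of $H_2$ nearest to $\varphi$, one has $||T\varphi||=||\varphi-S_2\varphi||\le||\varphi-S_0\varphi||=||T_0\varphi||$. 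Hence, by Cauchy--Schwarz and $2ab\le a^2+b^2$, for $\varphi,\psi\in X_m$ with $||\varphi||=||\psi||=1$ each of the four terms is at most $\rho_0$ in modulus, so $|B(\varphi,\varphi)|\le 4\lambda_m^{-1}\rho_0\,||\varphi||^2$ on $X_m$. For the right-hand side, $||S\varphi||^2=||\varphi||^2-||T\varphi||^2\ge||\varphi||^2-||T_0\varphi||^2\ge(1-\rho_0)||\varphi||^2$. We may assume $\rho_0\le 1/2$, since otherwise $|\mu_k^{-1}-\lambda_m^{-1}|\le c\sigma\le c\sigma_0\le 2c\sigma_0\rho_0$ by the very choice of the $\mu_k^{-1}$ and there is nothing left to prove. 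Under this assumption $(S\varphi,S\psi)$ is positive definite on $X_m$ with $(S\varphi,S\varphi)\ge\frac12||\varphi||^2$, so the Rayleigh-quotient characterization of the eigenvalues of the finite-dimensional problem (\ref{Eig19av1aa}) gives $|\tau_k|\le 8\lambda_m^{-1}\rho_0$.

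\emph{Conclusion.} It then remains to insert this into (\ref{Sept20d}). First, $\rho\le C\rho_0$: indeed $|T\varphi|\le c_0||T\varphi||\le c_0||T_0\varphi||$ and $|\Psi_\varphi|\le c_0||\Psi_\varphi||$ by (\ref{4.1a}), while $\sigma||\Psi_\varphi||^2\le||\Psi_\varphi||^2$. Then (\ref{Sept20d}) yields $|\mu_k^{-1}-\lambda_m^{-1}|\le|\tau_k|+C(\rho+|\tau_k|\sigma)\le C\rho_0$, which is the assertion. Once Theorem \ref{Th1} is available this is essentially bookkeeping; the one genuine point is securing a uniform positive lower bound for $||S\varphi||^2$ on $X_m$ in order to control the eigenvalues $\tau_k$ of (\ref{Eig19av1aa}), which is what the (otherwise cosmetic) alternative $\rho_0\le 1/2$ versus $\rho_0>1/2$ disposes of. It rests, like the rest of the argument, on the elementary inequality $||T_2\varphi||\le||T_0\varphi||$, which is exactly what allows $\rho$ to be replaced by the larger but intrinsic quantity $\rho_0$.
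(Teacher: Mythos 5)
Your proof is correct and follows the paper's intended route: derive $(\Psi_\varphi,S_0\psi)=0$ by testing (\ref{TTN23feb}) against $w=S_0\psi\in H_1\cap H_2$ and using the eigenequation for $\varphi$, then dominate each of the four terms in (\ref{Eig19av1aa}) via $\|T_2\varphi\|\le\|T_0\varphi\|$, and finally feed $|\tau_k|\le C\rho_0$ and $\rho\le C\rho_0$ into (\ref{Sept20d}). The one place you deviate is in securing a uniform lower bound for $\|S\varphi\|^2$ on $X_m$: the paper invokes inequality (\ref{4.1}), which gives $\|S\varphi\|^2\ge(1-\Lambda_m\sigma^{1/2})\|\varphi\|^2\ge\tfrac12\|\varphi\|^2$ directly for $\sigma\le\sigma_0$, whereas you use $\|S\varphi\|^2\ge(1-\rho_0)\|\varphi\|^2$ together with a dichotomy on whether $\rho_0\le\tfrac12$; both are valid, but the (\ref{4.1})-based argument avoids the case split and is what the paper's preparatory remark before the corollaries is pointing at.
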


\begin{Rem} In {\rm \cite{K1}} and {\rm \cite{KN}} another quantity is used to
measure proximity of two subspaces. I is defined as a constant
$\sigma_*=\sigma_*(H_1,H_2)$ in the inequality
\begin{equation}\label{1.1a}
\mid u\mid^2\leq \sigma_* ||u||^2
\end{equation}
 for all elements $u$ from $u\in H_1+H_2$
orthogonal to $H_1\cap H_2$. Since
$$
\mid(S_1-S_2)u\mid\leq \sqrt{\sigma_*}||(S_1-S_2)u||\leq
2\sqrt{\sigma_*}||u||,
$$
the constant $\sigma$ in {\rm (\ref{1.1azz})} is subject to
\begin{equation}\label{Feb10a}
\sigma\leq 4\sigma_*.
\end{equation}
One can check that $T\varphi$ and $\Psi_\varphi$ belong to
$H_1+H_2$ and both of them are orthogonal to $H_1\cap H_2$ for
$\varphi\in X_m$. Therefore, $\mid T\varphi\mid\leq\sigma_*
||T\varphi||^2$ and $\mid \Psi_\varphi\mid\leq\sigma_*
||\Psi_\varphi||^2$ due to {\rm (\ref{1.1a})}. So, under
assumption (\ref{1.1a}), we have
\begin{equation}\label{Feb10b}
\rho\leq c\sigma_*\rho_*,\;\;\; \rho_*=\max_{\varphi\in X_m,
||\varphi||=1}(||T\varphi||^2+||\Psi_\varphi||^2).
\end{equation}
Using  {\rm (\ref{Feb10b})}, we see that formula {\rm
(\ref{Sept20d})} in {\rm Theorem \ref{Th1}} implies the following
asymptotic formula
\begin{equation}\label{Sept20dzz}
\mu_k^{-1}=\lambda_m^{-1}+\tau_k+O(\sigma_*\rho_*+|\tau_k|\sigma_*
),\;\; k=1,\ldots,J_m,
\end{equation}
where $\tau=\tau_k$ is an eigenvalue of the problem {\rm
(\ref{Eig19av1aaz})}.  Moreover, $\tau_1,\ldots,\tau_m$ in {\rm
(\ref{Sept20dzz})} run through all eigenvalues of {\rm
(\ref{Eig19av1aaz})} counting their multiplicities.
\end{Rem}

\section{Proof of Proposition \protect\ref{Pr1}}\label{S4}

In what follows, we put ${\cal X}_m=X_1+\cdot +X_m$ and by ${\cal
Y}_m$ we denote the orthogonal  complement of $S{\cal X}_m$ in
$H_2$.

An important role will be played by the operator
\begin{equation}\label{Sept28a}
B=K_2S_2-S_2K_1.
\end{equation}
In Sect \ref{Sect4}, we will show in particular,  that the norm of
$B:H_1\to H_2$ is estimated by a constant times $\sigma^{1/2}$.

By $c$, $C$,... we denote various constants depending on
$\lambda_1,\cdots,\lambda_{m+1}$ and $c_0$.

\subsection{Some inequalities}\label{Sect4}

In this section we prove some important estimates, which follow
from (\ref{1.1azz}) and which will be used in the proofs of
Proposition \ref{Pr1} and Theorem \ref{Th1}.

{\em Estimates for $S_2$}. Let us start with the following
inequality
\begin{equation}\label{4.1}
\Big (1-\Lambda_m\sigma^{1/2}\Big
)||\varphi||^2\leq||S\varphi||^2\leq ||\varphi||^2\;\;\;\mbox{for
$\varphi\in {\cal X}_m$},
\end{equation}
where $\Lambda_m=\Big (\sum_{k=1}^m\lambda_k\Big )^{1/2}$. Since
$S_2$ is an orthogonal projector, the right inequality is obvious.
Let us prove the left one. We represent $\varphi$ as
$\varphi_0+\varphi_1$, where $\varphi_0=S_2\varphi$ and
$\varphi_1=(I-S_2)\varphi$. Since $S_2\varphi_0=\varphi_0$ and
$S_2\varphi_1=0$, we have $||S\varphi||^2=||\varphi_0||^2$.
 Using that
$S_1\varphi=\varphi$, we conclude that
$\varphi_1=(S_1-S_2)\varphi$ and from (\ref{1.1azz}) it follows
that
\begin{equation}\label{4.2}
\mid\varphi_1\mid^2=\mid(S_1-S_2)\varphi\mid^2\leq\sigma||\varphi||^2.
\end{equation}
Representing $\varphi$ as $\varphi=\zeta_1+\ldots+\zeta_m$, where
$\zeta_k\in X_k$, and noting that
$(\varphi_1,\varphi_1)=(\varphi,\varphi_1)$, we get
\begin{eqnarray*}
&&(\varphi_1,\varphi_1)=\sum_{k=1}^m(\zeta_k,\varphi_1)
=\sum_{k=1}^m\lambda_k\langle\zeta_k,\varphi_1\rangle\leq
\sum_{k=1}^m\lambda_k\mid\zeta_k\mid\,\mid\varphi_1\mid\\
&&\leq
\sum_{k=1}^m\lambda_k^{1/2}||\zeta_k||\,\mid\varphi_1\mid\leq
\Lambda_m||\varphi||\mid\varphi_1\mid ,
\end{eqnarray*}
where we used that $\zeta_k$ is an eigenvector of $K_1$
corresponding to the eigenvalue $\lambda_k^{-1}$. Using the last
inequality together with (\ref{4.2}), we get
$$
(\varphi_1,\varphi_1) \leq\Lambda_m\sigma^{1/2}(\varphi,\varphi).
$$
Therefore,
$$
||S\varphi||^2= ||\varphi_0||^2=||\varphi||^2-||\varphi_1||^2\geq
(1-\Lambda_m\sigma^{1/2})||\varphi||^2,
$$
which implies the left inequality in (\ref{4.1}).

From (\ref{4.1}) one can derive  the estimate
\begin{equation}\label{4.2a}
|(S\varphi,S\psi)-(\varphi,\psi)|\leq 3\Lambda_m
\sigma^{1/2}(||\varphi||^2+||\psi||^2)\;\;\mbox{for
$\varphi,\psi\in{\cal X}_m$.}
\end{equation}
Indeed, introduce the form
$b(u,v)=(S\varphi,S\psi)-(\varphi,\psi)$. By (\ref{4.2}),
\begin{equation}\label{4.2b}
|b(u,u)|\leq \varepsilon ||u||^2,
\end{equation}
where $\varepsilon=\Lambda_m\sigma^{1/2}$. Using that
$$
b(\varphi+\psi,\varphi+\psi)=b(\varphi,\varphi)+b(\psi,\psi)
+b(\varphi,\psi)+b(\psi,\varphi)
$$
and applying (\ref{4.2b}) for estimating quadratic terms here we
obtain
$$
|b(\varphi,\psi)+b(\psi,\varphi)|\leq
3\varepsilon(||\varphi||^2+||\psi||^2).
$$
Similar arguments applied to $b(\varphi+i\psi,\varphi+i\psi)$ give
the estimate
$$
|b(\varphi,\psi)-b(\psi,\varphi)|\leq
3\varepsilon(||\varphi||^2+||\psi||^2),
$$
which together with the previous one leads to (\ref{4.2a}).

{\em An estimate for the operator $B$}. Let us prove that
\begin{equation}\label{4.3}
||Bv||\leq 2c_0\sigma^{1/2}||v||\;\;\;\mbox{for $v\in H_1$}.
\end{equation}
For $w\in H_2$ we have
\begin{equation}\label{4.4}
(Bv,w)=(K_2S_2v,w)-(S_2K_1v,w)=\langle S_2v,w\rangle-\langle
v,S_1w\rangle.
\end{equation}
We write $v$ and $w$ as $v=S_2v+(I-S_2)v=v_0+v_1$ and
$w=S_1w+(I-S_1)w=w_0+w_1$. Then (\ref{4.4})  implies
\begin{equation}\label{4.5}
(Bv,w)=\langle v_0,w_1\rangle-\langle v_1,w_0\rangle,
\end{equation}
where we have used the equalities $S_2v_1=S_1w_1=0$. Since
$v_1=(S_1-S_2)v$ and $w_1=(S_2-S_1)w$, then by using estimate
(\ref{1.1azz}) for function containing index $1$, we get
$$
|(Bv,w)|\leq\sigma^{1/2}\big (\mid v_0\mid\,||w||+||v||\,\mid
w_0\mid\big).
$$
Applying here estimate (\ref{4.1a}), we get
$$
|(Bv,w)|\leq 2c_0\sigma^{1/2}||v||\,||w||,
$$
which implies (\ref{4.3}).

{\em An inequality for $K_1$ and $K_2$}. Finally, let us show that
\begin{equation}\label{4.6}
(K_2w,w)\leq
(K_1S_1w,S_1w)+(2c_0\sqrt{\sigma}+\sigma)||w||^2\;\;\;\mbox{for
$w\in H_2$},
\end{equation}
or what is equivalent, due to the definition of $K_1$ and $K_2$,
\begin{equation}\label{4.6a}
\mid w\mid^2\leq \mid
S_1w\mid^2+(2c_0\sqrt{\sigma}+\sigma)||w||^2.
\end{equation}
We write $w=S_1w+(I-S_1)w=w_0+w_1$. Since $S_1w_1=0$ and
$S_1w_0=w_0$, relation (\ref{4.6a}) takes the form
\begin{equation*}
\mid w_0+w_1\mid^2\leq\mid
w_0\mid^2+(2c_0\sqrt{\sigma}+\sigma)||w||^2.
\end{equation*}
or
\begin{equation}\label{4.6b}
\mid w_1\mid^2+2\langle w_0,w_1\rangle\leq
(2c_0\sqrt{\sigma}+\sigma)||w||^2.
\end{equation}
Using that $w_1=(S_2-S_1)w$ and applying (\ref{1.1azz}), we
estimate the left-hand side of (\ref{4.6b}) by
$$
\sigma||w||^2+2\sigma^{1/2}\mid w_0\mid\,||w||.
$$
According to (\ref{4.1a}), $\mid w_0\mid\leq c_0||w_0||\leq
c_0||w||$, which implies (\ref{4.6b}) and hence (\ref{4.6}).

\subsection{Proof of Proposition \protect\ref{Pr1}(i)}

1) Let $u\in {\cal Y}_m$. Then $S_1u$ is orthogonal to
$X_1+\cdots+X_m$ and by (\ref{4.6})
\begin{eqnarray}\label{4.6c}
&&(K_2u,u)\leq (K_1S_1u,S_1u)+c\sigma^{1/2}||u||^2\leq
\frac{1}{\lambda_{m+1}}(S_1u,S_1u)+c\sigma^{1/2}||u||^2\nonumber\\
&&\leq \Big (\frac{1}{\lambda_{m+1}}+c\sigma^{1/2}\Big )||u||^2.
\end{eqnarray}
 From this inequality it follows that there
are $\leq J_1+\cdots+J_m$ eigenvalues of $K_2$ counted together
with their multiplicity in the interval
$(1/\lambda_{m+1}+c\sigma^{1/2},\infty)$.

2) Let $u=S_2\varphi$, $\varphi\in {\cal X}_{m}$. Then
$$
(K_2u,u)=(S_2K_1\varphi,S_2\varphi)+(B\varphi,u)
$$
and using (\ref{4.3}), we get
$$
(K_2u,u)\geq(S_2K_1\varphi,S_2\varphi)-C\sigma^{1/2}||\varphi||\,||u||.
$$
Applying (\ref{4.2a}) to the first term in the left-hand side, we
obtain
$$
(K_2u,u)\geq(K_1\varphi,\varphi)-C_1\sigma^{1/2}(||K_1\varphi||^2+||\varphi||^2)
-C\sigma^{1/2}||\varphi||\,||u||,
$$
which leads to
$$
(K_2u,u)\geq(K_1\varphi,\varphi)-C_2\sigma^{1/2}(||\varphi||^2\geq
\Big(\frac{1}{\lambda_{m-1}}-C_2\sigma^{1/2}\Big )||\varphi||^2.
$$
Applying (\ref{4.1}) for estimating the second term in the
right-hand side in the last inequality, we arrive at
\begin{equation*}
(K_2u,u)\geq \Big(\frac{1}{\lambda_{m}}-C_2\sigma^{1/2}\Big
)||u||^2.
\end{equation*}
This implies that there are $\geq J_1+\cdots +J_{m}$ eigenvalues
of $K_2$ in the interval $(1/\lambda_{m}-C_2\sigma^{1/2},\infty)$.

Combining 1) and 2) we conclude that there are exactly
$J_1+\cdots+J_m$ eigenvalues of $K_2$ in the interval
$(\gamma_1,\infty)$ for $\gamma_1\in
(1/\lambda_{m+1}+C_1\sigma^{1/2},1/\lambda_{m}-C_2\sigma^{1/2})$ .

Applying 1) and 2) with $m$ replaced by $m-1$ we obtain that there
are exactly $J_1+\cdots+J_{m-1}$ eigenvalues of $K_2$ in the
interval $(\gamma_2,\infty)$ for $\gamma_2\in
(1/\lambda_{m}+C_1\sigma^{1/2},1/\lambda_{m-1}-C_2\sigma^{1/2})$ .
Therefore there exists  positive constants $c$ and $\sigma_0$
depending on $\lambda_1,\ldots,\lambda_{m+1}$ and $c_0$ such that
the intervals
$(1/\lambda_{m}-c\sigma^{1/2},1/\lambda_{m}+c\sigma^{1/2})$ and
$(1/\lambda_{m+1}+c\sigma^{1/2},1/\lambda_{m-1}-c\sigma^{1/2})$
contains exactly $J_m$ eigenvalues of $K_2$ for
$\sigma\leq\sigma_0$. The proof of Proposition \ref{Pr1}(i) is
complete.

\subsection{Proof of Proposition \protect\ref{Pr1}(ii)}

First, let us consider the equation
\begin{equation}\label{7.1}
Q_mK_2w-\mu w=f,
\end{equation}
where $f,w\in Q_mH_2$, $Q_m=I-P_m$ and
\begin{equation}\label{Sept28b}
|\mu^{-1}-\lambda_m^{-1}|\leq c\sigma^{1/2},
\end{equation}
where $c$ is the same constant as in (i). Our first goal is to
prove the estimate
\begin{equation}\label{7.8}
||w||\leq c_1||f||
\end{equation}
for solutions of equation (\ref{7.1}). Here the constant $c_1$
depends on $\lambda_1,\cdots,\lambda_{m+1}$ and $c_0$.

We represent $S_2{\cal X}_m$ as $S_2X_m+ Y_m$ where $Y_m$ is the
orthogonal complement to $S_2X_m$ in $S_2{\cal X}_m$. We introduce
the orthogonal projectors $R_m$ onto $Y_m$ and ${\cal T}_m$ onto
${\cal Y}_m$. Then $Q_m=R_m+ {\cal T}_m$. We write $w=w_0+w_1$ and
$f=f_0+f_1$, where $w_0,f_0\in Y_m$ and $w_1,f_1\in {\cal Y}_m$.
Equation (\ref{7.1}) can be written as the following system of
equations
\begin{equation}\label{7.1a}
R_mK_2(w_0+w_1)-\mu w_0=f_0
\end{equation}
and
\begin{equation}\label{7.2}
{\cal T}_mK_2w_1-\mu w_1=f_1-{\cal T}_mK_2w_0.
\end{equation}
From the second equation we get
$$
(K_2w_1,w_1)-\mu (w_1,w_1)=(f_1-{\cal T}_mK_2w_0,w_1).
$$
Using estimate (\ref{4.6c}), we obtain
$$
\mu(w_1,w_1)-\Big (\frac{1}{\lambda_{m+1}}+c\sigma^{1/2}\Big
)||w_1||^2\leq ({\cal T}_mK_2w_0,w_1)-(f_1,w_1),
$$
which implies that the operator $\mu^{-1}-Q_mK_2$ is positive
definite on ${\cal Y}_m$, equation (\ref{7.2}) is uniquely
solvable and its solution satisfies
\begin{equation}\label{7.3}
\Big (\mu-\frac{1}{\lambda_{m+1}}-c\sigma^{1/2}\Big )||w_1||_2\leq
||{\cal T}_mK_2w_0||+||f_1||.
\end{equation}
Furthermore, representing $w_0$ as $S\varphi_0$, $\varphi_0\in
{\cal X}_m$ we have
$$
{\cal T}_mK_2w_0={\cal T}_mK_2S_2\varphi_0={\cal
T}_m(S_2K_1\varphi_0+B\varphi_0)={\cal T}_mB\varphi_0.
$$
Therefore, by (\ref{4.3}) and (\ref{4.1})
\begin{equation}\label{7.3d}
||{\cal T}_mK_2w_0||\leq c\sigma^{1/2}||\varphi_0||\leq
c_1\sigma^{1/2}||w_0||.
\end{equation}
Combining this estimate with (\ref{7.3}), we get
\begin{equation}\label{7.3e}
||w_1||\leq C(||f_1||+\sigma^{1/2}||w_0||).
\end{equation}
We represent $\varphi_0$ as $\varphi'+\varphi_m$, where
$\varphi_m\in X_m$ and $\varphi'\in X_1+\cdots +X_{m-1}$. Since
$(S_2\varphi_0,S_2\varphi_m)=0$, using (\ref{4.2a}) we get
\begin{equation}\label{7.4}
(\varphi_0,\varphi_m)=||\varphi_m||^2\leq 3\lambda_m^{1/2}
\sigma^{1/2}||\varphi_0||^2.
\end{equation}
Therefore,
\begin{equation*}\label{7.5}
(K_2w_0,w_0)=(K_2S_2\varphi_0,S_2\varphi_0)=(S_2K_1\varphi_0,S_2\varphi_0)
+(B\varphi_0,S_2\varphi_0).
\end{equation*}
Using (\ref{4.2a}) and (\ref{4.3}) for estimating the first and
second terms in the right-hand side of the last relation
respectively we get
\begin{equation}\label{7.6}
(K_2w_0,w_0)\geq
(K_1\varphi_0,\varphi_0)-c\sigma^{1/2}||\varphi_0||^2,
\end{equation}
where $c$ depends on the eigenvalues $\lambda_1,\lambda_m$ and the
constant $c_0$ in (\ref{4.1a}). Since
$$
(K_1\varphi_0,\varphi_0)=(K_1\varphi_m,\varphi_m)
+(K_1\varphi',\varphi')=\frac{1}{\lambda_m}||\varphi_m,||^2+(K_1\varphi',\varphi'),
$$
by using (\ref{7.4}), we get
$$
(K_1\varphi_0,\varphi_0)\geq \Big
(\frac{1}{\lambda_{m-1}}-c\sigma^{1/2}\Big )||\varphi_0||^2\geq
(\frac{1}{\lambda_{m-1}}-c_1\sigma^{1/2}\Big )|w_0||^2.
$$
Here we used also (\ref{4.1}) in order to obtain the last
inequality. Applying the last estimate to the first term in the
right-hand side in (\ref{7.6}), we obtain
\begin{equation}\label{7.7}
(K_2w_0,w_0)\geq(\frac{1}{\lambda_{m-1}}-c_2\sigma^{1/2}\Big
)|w_0||^2.
\end{equation}
Using (\ref{7.7}) and (\ref{7.3d}), we conclude that equation
(\ref{7.1a}) is solvable with respect to $w_0$ and $||w_0||\leq
c||f||$. Similar estimate for $w_1$ follows from (\ref{7.3e}).
Conclusively, equation (\ref{7.1}) is uniquely solvable and for
its solution $w\in Q_mH_2$  estimate (\ref{7.8}) holds.

\bigskip Let $\mu^{-1}$ be an eigenvalue of problem (\ref{1.2})
satisfying (\ref{Sept28b}) and let $U$ be a corresponding
eigenvector. We represent it as $U=S_2\varphi_m+w$ where
$\varphi_m\in X_m$ and $w\in w\in Q_mH_2$. Then
$$
K_2(S_2\varphi_m+w)=\frac{1}{\mu}(S_2\varphi_m+w).
$$
We write this relation as
$$
K_2w-\frac{1}{\mu}w+S_2K_1\varphi_m+B\varphi_m-\frac{1}{\mu}S_2\varphi_m=0,
$$
or, equivalently
\begin{equation}\label{7.9}
K_2w-\frac{1}{\mu}w=-B\varphi_m+\Big
(\frac{1}{\mu}-\frac{1}{\lambda_m}\Big )S_2\varphi_m.
\end{equation}
We denote  the left-hand side  by $f$. Then by (\ref{4.3}) and
assumption on $\mu$, we have
$$
||f||\leq C\sigma^{1/2}||\varphi_m||.
$$
Applying operator ${\cal T}_m$ to both sides in (\ref{7.9}) and
using (\ref{7.8}) together with the last estimate of $f$ we obtain
\begin{equation}\label{7.10}
||w||\leq C\sigma^{1/2}||\varphi_m||\leq C_1\sigma^{1/2}||w||.
\end{equation}

\subsection{Corollary of Proposition \protect\ref{Pr1}}

Let $Z_m$ be the space of eigenvectors of the problem (\ref{1.2})
corresponding to eigenvalues located in the interval
$(1/\lambda_{m}-c\sigma^{1/2},1/\lambda_{m}+c\sigma^{1/2})$, see
Proposition \ref{Pr1}(i). According to the same proposition $\dim
Z_m=J_m$. Let us show that
\begin{equation}\label{Okt13a}
|(U,V)-(P_mU,P_mV)|\leq C\sigma (||U||^2+||V||^2)\;\;\;\mbox{for
all $U,V\in Z_m$.}
\end{equation}
First, let us check that
\begin{equation}\label{Okt13b}
(U,U)-(P_mU,P_mU)\leq C\sigma ||U||^2\;\;\;\mbox{for all $U\in
Z_m$.}
\end{equation}
Indeed, introduce an orthonormal basis $U_1,\ldots,U_{J_m}$ in
$Z_m$ consisting of eigenvectors of problem (\ref{1.2}). If $U\in
Z_m$ we represent it as $U=a_1U_1+\cdots+a_{J_m}U_{J_m}$. Using
(\ref{0.1}), we get
$$
||U-P_mU||\leq\sum_{j=1}^{J_m}|a_j|\,||U_j-P_mU_j||\leq
c\sigma^{1/2}\sum_{j=1}^{J_m}|a_j|\leq c_1\sigma^{1/2}||U||,
$$
which implies (\ref{Okt13b}) since
$||U-P_mU||^2=(U,U)-(P_mU,P_mU)$.

In order to prove (\ref{Okt13a}) we introduce the quasi-linear
form $b(U,V)=(U,V)-(P_mU,P_mV)$. Since $b(U,U)\geq 0$, we have
$$
|b(U,V)|\leq b(U,U)^{1/2}b(V,V)^{1/2}\leq
\frac{1}{2}(b(U,U)+b(V,V)),
$$
which together with (\ref{Okt13b}) implies (\ref{Okt13a}).

\section{Proof of Theorem \protect\ref{Th1}}\label{SectTh}

\subsection{A finite dimensional reduction}\label{S2}

 We represent the function $U\in H_2$ in (\ref{TTN2})
as $U=S_2\varphi+w$, where $\varphi\in X_m$ and $w\in Q_mH_2$.
Then (\ref{TTN2}) takes the form
$$
(\mu^{-1}- K_2)(S_2\varphi+w)=0.
$$
By using the operator $B$, we can write the last relation as
$$
\Big (\frac{1}{\mu}-\frac{1}{\lambda_m}\Big
)S_2\varphi-B\varphi+(\mu^{-1}-K_2)w=0
$$
Applying operators $P_m$ and $Q_m$ we get
\begin{equation}\label{2.1}
\Big (\frac{1}{\mu}-\frac{1}{\lambda_m}\Big
)S_2\varphi-P_mB\varphi-P_mK_2w=0
\end{equation}
and
\begin{equation}\label{2.1m}
Q_m(\mu^{-1}-K_2)w=Q_mB\varphi.
\end{equation}
We assume that $\mu$ satisfies (\ref{Sept28b}). Then the last
equation coincides with (\ref{7.1}) if we take there
$f=-Q_mB\varphi$. Therefore equation (\ref{2.1m}) is uniquely
solvable and
\begin{equation}\label{LUD3a}
||(\mu^{-1}-Q_mK_2Q_m)^{-1}||_{Q_mH_2\to Q_mH_2}\leq c_1,
\end{equation}
where   $c_1$ is the  constant from (\ref{7.8}). Inserting
$w=(\mu^{-1}-Q_mK_2Q_m)^{-1}Q_mB\varphi$ in (\ref{2.1}), we obtain
\begin{eqnarray}\label{2.2}
&&\Big (\frac{1}{\mu}-\frac{1}{\lambda_m}\Big
)S_2\varphi-P_mB\varphi-R(\lambda,\varphi)=0,\nonumber\\
&&R(\lambda,\varphi)=P_mK_2Q_m(\mu^{-1}-Q_mK_2Q_m)^{-1}Q_mB\varphi
\end{eqnarray}
We represent $R$ as $ R(\lambda,\varphi)=\mu
P_mK_2Q_mB\varphi+R_1(\lambda,\varphi),$ where
$$
R_1(\lambda,\varphi)
=P_mK_2Q_mK_2Q_m(\mu^{-1}-Q_mK_2Q_m)^{-1}Q_mB\varphi.
$$
Equation (\ref{2.2}) becomes
\begin{equation}\label{2.3}
\Big (\frac{1}{\mu}-\frac{1}{\lambda_m}\Big
)S_2\varphi-P_mB\varphi-\mu
P_mK_2Q_mB\varphi-R_1(\lambda,\varphi)=0.
\end{equation}
Taking the inner product of the left-hand side with $S_2\psi$ in
$H_2$, where $\psi\in X_m$, we get
$$
\Big (\frac{1}{\mu}-\frac{1}{\lambda_m}\Big
)(S_2\varphi,S_2\psi)_2-(B\varphi,S_2\psi)-\mu
(Q_mB\varphi,K_2S_2\psi)-(R_1(\lambda,\varphi),S_2\psi)=0
$$
Using that $Q_mK_2S_2\psi=Q_mB\psi$, we arrive at
\begin{equation}\label{2.4}
\Big (\frac{1}{\mu}-\frac{1}{\lambda_m}\Big
)(S_2\varphi,S_2\psi)_2-(B\varphi,S_2\psi)_2-\mu
(Q_mB\varphi,B\psi)_2-(L(\mu)B\varphi,B\psi)_2=0,
\end{equation}
where $L(\mu)=Q_mK_2Q_m(\mu^{-1}-Q_mK_2Q_m)^{-1}Q_m$.  By
(\ref{LUD3a}), one can check the following estimate for the last
operator
\begin{equation}\label{2.4n}
|(L(\mu)B\psi,B\psi)|\leq C(K_2Q_mB\psi,Q_mB\psi)_2=C\langle
Q_mB\psi,Q_mB\psi\rangle.
\end{equation}

\subsection{Representation of $(B\varphi,S\psi)$}

 For $\varphi,\, \psi\in X_m$, we use the
representations
\begin{equation}\label{Feb20a}
\varphi=S_2\varphi+T_2\varphi,\;\;\;\psi=S_2\psi+T_2\psi.
\end{equation}
 Then
\begin{eqnarray}\label{Feb20aa}
&&(B\varphi,S_2\psi)=\langle
S_2\varphi,S_2\psi\rangle-\lambda_m^{-1}( S_2\varphi,S_2\psi
)\nonumber\\
&&=\langle \varphi-T_2\varphi,\psi-T_2\psi\rangle-\lambda_m^{-1}(
\varphi-T_2\varphi,\psi-T_2\psi ).
\end{eqnarray}
Using the relation
$$
(\Psi_\varphi,S_2\psi)=(\Psi_\varphi,\psi)=(\varphi,S_2\psi)-\lambda_m\langle
\varphi,S_2\psi\rangle
$$
and similar equality with exchanged $\varphi$ and $\psi$, we
obtain
\begin{eqnarray*}
&&(\varphi-T_2\varphi,\psi-T_2\psi )=(T_2\varphi,T_2\psi
)-(\varphi,\psi)+(\varphi,S_2\psi)+(S_2\varphi,\psi)\\
&&=(T_2\varphi,T_2\psi
)-\lambda_m\langle\varphi,\psi\rangle+(\Psi_\varphi,\psi)+\lambda_m\langle
\varphi,S_2\psi\rangle\\
&&+(\varphi,\Psi_\psi)+\lambda_m\langle S_2\varphi,\psi\rangle .
\end{eqnarray*}
Replacing  the last term in (\ref{Feb20aa}) according to this
formula, we derive from (\ref{Feb20aa})
\begin{equation}\label{Feb20ab}
(B\varphi,S_2\psi)= \langle
T_2\varphi,T_2\psi\rangle-\lambda_m^{-1}\Big (( T_2\varphi,T_2\psi
)+(\Psi_\varphi,\psi)+(\varphi,\Psi_\psi)\Big ).
\end{equation}

\begin{kor} Let $H_2\subset H_1$ and let $\lambda_1$ be the first
eigenvalue of {\rm (\ref{1.1})} and $\varphi$ be a corresponding
eigenfunction. Then $\Psi_\varphi=\Psi_\psi=0$ and from {\rm
(\ref{2.4})} and {\rm (\ref{Feb20ab})} it follows that
\begin{equation}\label{Nov5aa}
1-\frac{\lambda_1}{\mu_1}\leq
\frac{||T\varphi||^2}{||S\varphi||^2},
\end{equation}
where $\mu_1$ is the first eigenvalue of {\rm (\ref{1.2})}.
\end{kor}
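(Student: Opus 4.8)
The plan is to specialise the finite-dimensional reduction (\ref{2.4}) to the ground state and to exploit that the ``remainder'' operators occurring there are nonnegative; nothing from Theorem \ref{Th1} beyond (\ref{2.4}), the identity (\ref{Feb20ab}) and the estimates of Section \ref{S4} is needed. First I would check that $\Psi_\varphi=\Psi_\psi=0$: here $m=1$, so for $\varphi\in X_1$ one has $(\varphi,w)=\lambda_1\langle\varphi,w\rangle$ for every $w\in H_1$, hence for every $w\in H_2$ since $H_2\subset H_1$; thus the right-hand side of (\ref{TTN23feb}) vanishes, and putting $w=\Psi_\varphi$ together with positive-definiteness of $(\cdot,\cdot)$ gives $\Psi_\varphi=0$, and likewise $\Psi_\psi=0$. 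Then I would take $\psi=\varphi$, $m=1$, $\mu=\mu_1$ in (\ref{2.4}), which becomes
$$\Big(\frac{1}{\mu_1}-\frac{1}{\lambda_1}\Big)||S\varphi||^2=(B\varphi,S_2\varphi)+\mu_1(Q_mB\varphi,B\varphi)+(L(\mu_1)B\varphi,B\varphi).$$

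The second summand equals $\mu_1||Q_mB\varphi||^2\ge0$ because $Q_m$ is an orthogonal projector, and the third one is nonnegative as well: restricted to $Q_mH_2$ the operator $L(\mu_1)$ is $\phi(M)$, where $M=Q_mK_2Q_m|_{Q_mH_2}$ is selfadjoint and nonnegative and $\phi(t)=t/(\mu_1^{-1}-t)$, and $\phi\ge0$ on the spectrum of $M$ because for $\sigma$ small that spectrum lies strictly below $\mu_1^{-1}$ (this is exactly the positivity of $\mu^{-1}-Q_mK_2$ on ${\cal Y}_m$ established in the proof of Proposition \ref{Pr1}, cf. (\ref{4.6c})); hence $(L(\mu_1)B\varphi,B\varphi)=(\phi(M)Q_mB\varphi,Q_mB\varphi)\ge0$. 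Consequently $(B\varphi,S_2\varphi)\le(\mu_1^{-1}-\lambda_1^{-1})||S\varphi||^2$. On the other hand, (\ref{Feb20ab}) with $\psi=\varphi$ and $\Psi_\varphi=0$ reads $(B\varphi,S_2\varphi)=\mid T\varphi\mid^2-\lambda_1^{-1}||T\varphi||^2\ge-\lambda_1^{-1}||T\varphi||^2$. Combining the two estimates gives $-\lambda_1^{-1}||T\varphi||^2\le(\mu_1^{-1}-\lambda_1^{-1})||S\varphi||^2$, that is $(1-\lambda_1/\mu_1)||S\varphi||^2\le||T\varphi||^2$, which is (\ref{Nov5aa}). (One may also bypass (\ref{2.4}) altogether: evaluating $(B\varphi,S_2\varphi)$ directly from $B=K_2S_2-S_2K_1$ gives as well $(B\varphi,S_2\varphi)=\mid S\varphi\mid^2-\lambda_1^{-1}||S\varphi||^2$, so comparison with (\ref{Feb20ab}) yields $||S\varphi||^2-||T\varphi||^2=\lambda_1(\mid S\varphi\mid^2-\mid T\varphi\mid^2)$; since $\mu_1^{-1}$ is the largest eigenvalue of $K_2$ one has $\mu_1\mid S\varphi\mid^2\le||S\varphi||^2$, obtained by testing the Rayleigh quotient with $S_2\varphi$, and putting these together with $\mid T\varphi\mid^2\ge0$ again produces (\ref{Nov5aa}).)

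I expect the one point needing thought to be the nonnegativity of $\mu_1||Q_mB\varphi||^2+(L(\mu_1)B\varphi,B\varphi)$, which rests on the functional-calculus remark above together with the spectral gap estimate for $Q_mK_2Q_m$ already obtained in Section \ref{S4}; everything else is bookkeeping. The only degenerate situation, $S_2\varphi=0$, can occur only when $\sigma$ is not small, and then the right-hand side of (\ref{Nov5aa}) is $+\infty$, so the inequality is trivial.
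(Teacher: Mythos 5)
Your main argument is correct and follows exactly the route the paper indicates (``from (2.4) and (Feb20ab) it follows\dots''): with $m=1$ and $\Psi_\varphi=0$, equation (2.4) reduces to $(\mu_1^{-1}-\lambda_1^{-1})\|S\varphi\|^2=(B\varphi,S_2\varphi)+\mu_1\|Q_1B\varphi\|^2+(L(\mu_1)B\varphi,B\varphi)$, and your observation that the last two terms are nonnegative is the whole content of the proof. The positivity of $(L(\mu_1)B\varphi,B\varphi)$ via functional calculus is valid here precisely because $m=1$: then $Y_1=\{0\}$, so $Q_1H_2={\cal Y}_1$, and (\ref{4.6c}) keeps the spectrum of $Q_1K_2Q_1$ strictly below $\mu_1^{-1}$ for $\sigma$ small. (Had $m>1$, $Q_mH_2$ would contain $Y_m$, where the spectrum of $Q_mK_2Q_m$ sits near $1/\lambda_1,\dots,1/\lambda_{m-1}$, above $\mu_m^{-1}$, and the sign would fail; it is worth saying this explicitly.) One small caveat you leave implicit: (2.4) is written for the particular $\varphi=V_1\in X_1$ satisfying $P_1U_1=SV_1$, so for the inequality to read as stated for ``a corresponding eigenfunction'' one should note that it is scale-invariant in $\varphi$ and appeal to $J_1=1$, or use your alternative argument, which applies to every $\varphi\in X_1$.

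That alternative argument is in fact the cleaner and more robust route and deserves to be promoted from a parenthetical: computing $(B\varphi,S_2\varphi)$ directly from $B=K_2S_2-S_2K_1$ gives $(B\varphi,S_2\varphi)=\mid S\varphi\mid^2-\lambda_1^{-1}\|S\varphi\|^2$, equating this to the expression from (Feb20ab) yields the identity $\|S\varphi\|^2-\|T\varphi\|^2=\lambda_1(\mid S\varphi\mid^2-\mid T\varphi\mid^2)$, and then $\mu_1\mid S\varphi\mid^2\le\|S\varphi\|^2$ (Rayleigh quotient in $H_2$ tested with $S_2\varphi$) together with $\mid T\varphi\mid^2\ge0$ gives (\ref{Nov5aa}) without invoking (2.4), the resolvent operator $L(\mu)$, or any smallness of $\sigma$. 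This buys both simplicity and generality (it holds for all $\varphi\in X_1$ and with no restriction on $\sigma$). The route through (2.4) is of course the one the paper gestures at, and what it buys is that it sits inside the finite-dimensional reduction already set up, so it illustrates why (2.4) captures the leading behaviour; but as a self-contained proof of the corollary, the direct computation is preferable.
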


\subsection{Representation of $(Q_mB\varphi,B\psi)$}\label{Sect4y}

For $\varphi\in X_M$ and $w\in H_2$, we have the following
representation
\begin{eqnarray*}
&&(B\varphi,w)=(K_2
S_2\varphi,w)-\frac{1}{\lambda_m}(S_2\varphi,w)
=\langle\varphi-T_2\varphi,w\rangle-\frac{1}{\lambda_m}(\varphi,w)\\
&&=-\frac{1}{\lambda_m}(\Psi_\varphi,w)-\langle
T_2\varphi,w\rangle.
\end{eqnarray*}
This implies
\begin{equation}\label{Sept24a}
B\varphi=K_2\Phi_ \varphi-\lambda_m^{-1}\Psi_\varphi,
\end{equation}
where $\Phi_\varphi$ is the orthogonal with respect to the inner
product in ${\cal H}$ projection of $T_2\varphi$ onto ${\cal
H}_2$. Using (\ref{Sept24a}), we get the desired representation
\begin{equation}\label{Sept24c}
(Q_mB\varphi,B\psi
)=\frac{1}{\lambda_m^2}(\Psi_\varphi,\Psi_\psi)+b_0(\varphi,\psi),
\end{equation}
where
\begin{eqnarray}\label{Feb20b}
&&b_0(\varphi,\psi)=-\frac{1}{\lambda_m^2}(P_m\Psi_\varphi,\Psi_\psi)
+(Q_mK_2\Phi_\varphi,K_2\Phi_\psi)\nonumber\\
&& -\frac{1}{\lambda_m}\big
(\langle\Phi_\varphi,Q_m\Psi_\psi\rangle+\langle
Q_m\Psi_\varphi,\Phi_\psi\rangle\big ).
\end{eqnarray}
Let us estimate the form $b_0$. Since
$$
||K_2\Phi_ \varphi||\leq C\mid \Phi_ \varphi\mid\leq C\mid
T_2\varphi\mid,
$$
it follows from (\ref{Sept24a}) and (\ref{4.3}) that
\begin{equation}\label{Sept24bz}
||\Psi_\varphi||\leq C\big (\sigma^{1/2}||\varphi||+\mid
T_2\varphi\mid\big )\;\;\;\mbox{for $\varphi\in X_m$.}
\end{equation}
Using that $T_2\varphi=(S_1-S_2)\varphi$, we have that $\mid
T_2\varphi\mid^2\leq\sigma ||\varphi||^2$. Therefore,
(\ref{Sept24bz}) implies
\begin{equation}\label{Sept24b}
||\Psi_\varphi||\leq C\sigma^{1/2}||\varphi||\;\;\;\mbox{for
$\varphi\in X_m$.}
\end{equation}

Let $\Upsilon_k$, $k=1,\ldots,J_m$, be an orthogonal basis in
$SX_m$ and let $\Upsilon_k=S\varphi_k$, $\varphi_k\in X_m$. Then
\begin{equation}\label{Sept20am}
(P_m\Psi_\varphi,\Psi_\psi)=\sum_{k=1}^{J_m}(\Psi_\varphi,S\varphi_k)
\,(S\varphi_k,\Psi_\psi).
\end{equation}
Using definitions of $S$, $K_2$ and $\Psi_\varphi$, one can verify
that
\begin{equation}\label{Sept24da}
S\varphi=\lambda_m K_2\varphi^*+\Psi_\varphi\;\;\;\mbox{for
$\varphi\in X_m$,}
\end{equation}
where $\varphi^*$ is orthogonal in ${\cal H}$ projection of
$\varphi$ on ${\cal H}_2$. Therefore,
$$
(\Psi_\varphi,S\varphi_k)=\lambda_m
\langle\Psi_\varphi,\varphi^*_k\rangle
+(\Psi_\varphi,\Psi_{\varphi_k}).
$$
Applying  inequalities (\ref{Sept24b}) for estimation of and $||
\Psi_{\varphi_k}||$, we get
$$
|(\Psi_\varphi,S\varphi_k)|\leq C\big
(\mid\Psi_\varphi\mid+\sigma^{1/2}||\Psi_\varphi||\big ).
$$
Therefore, it follows from (\ref{Sept20am}) that
\begin{equation}\label{Sept24d}
|(P_m\Psi_\varphi,\Psi_\psi)|\leq C\big (\sigma
(||\Psi_\varphi||^2+||\Psi_\psi||^2)+\mid\Psi_\varphi\mid^2+\mid\Psi_\psi\mid^2\big
).
\end{equation}
Since $K_2$ is a bounded operator from ${\cal H}_2$ to $H_2$, we
have
\begin{equation}\label{Sept20a}
(Q_mK_2\Phi_\varphi,K_2\Phi_\varphi)\leq(K_2\Phi_\varphi,K_2\Phi_\varphi)\leq
C\langle\Phi_\varphi,\Phi_\varphi\rangle\leq C\mid T\varphi\mid^2.
\end{equation}
 Similarly,
\begin{equation}\label{Sept20b}
\Big
|\langle\Phi_\varphi,\Psi_\psi\rangle+\langle\Psi_\varphi,\Phi_\psi\rangle\Big
|\leq \mid T\varphi\mid^2+\mid T\psi\mid^2+\mid
\Psi_\varphi\mid^2+\mid \Psi_\psi\mid^2.
\end{equation}
Combining (\ref{Sept24d})--(\ref{Sept20b}), we obtain
\begin{equation}\label{Sept24e}
b_0(\varphi,\psi)\leq C(\wp(\varphi)+\wp(\psi)),
\end{equation}
where
$$
\wp(\varphi)=\mid T_2\varphi\mid^2+\mid \Psi_\varphi\mid^2 +\sigma
||\Psi_\varphi||^2.
$$
We note that from the definition of $\rho$, see (\ref{Sept20c}),
it follows that
\begin{equation}\label{Feb10azzz}
\rho= \max_{\varphi\in X_m,||\varphi||=1}\wp(\varphi).
\end{equation}

\subsection{Estimate of $\langle
Q_mB\varphi,Q_m B\psi\rangle$}

Let us consider first the term $\langle P_mB\varphi,B\psi\rangle$.
Using the basis $\Upsilon_k=S\varphi_k$, $k=1,\ldots,J_m$,
introduced in Sect. \ref{Sect4y}, we have
\begin{equation}\label{Sept24f}
\langle
P_mB\varphi,B\psi\rangle=\sum_{k=1}^{J_m}(B\varphi,S\varphi_k)\langle
S\varphi_k,B\psi\rangle .
\end{equation}
Applying representation (\ref{Sept24da}), we get
$$
(B\varphi,S\varphi_k)=\lambda_m\langle B\varphi,
\varphi_k^*\rangle+(B\varphi,\Psi_{\varphi_k}),
$$
which together with (\ref{Sept24b}) and (\ref{Sept24a}) leads
$$
(B\varphi,S\varphi_k)\leq  C\big (\mid T_2\varphi\mid
+\mid\Psi_\varphi\mid+\sigma^{1/2}||\Psi_\varphi||\big ).
$$
Similarly,
$$
|\langle S\varphi_k,B\psi\rangle|\leq C\big (\mid T_2\psi\mid
+\mid\Psi_\psi\mid+\sigma^{1/2}||\Psi_\psi||\big ).
$$
Applying these for estimating the right-hand side of
(\ref{Sept24f}), we get
\begin{equation}\label{Sept24g}
|\langle P_mB\varphi,B\psi\rangle|\leq C(\wp(\varphi)+\wp(\psi)).
\end{equation}
Similar considerations give the estimate
\begin{equation}\label{Sept24ga}
|\langle P_mB\varphi,P_mB\psi\rangle|\leq
C(\wp(\varphi)+\wp(\psi)).
\end{equation}
Furthermore, using (\ref{Sept24a}), we get
\begin{equation}\label{Sept24gaa}
|\langle B\varphi,B\psi\rangle|\leq C (\mid T_2\varphi\mid^2+\mid
T_2\psi\mid^2+\mid\Psi_\varphi\mid^2+\mid\Psi_\psi\mid^2).
\end{equation}

Applying (\ref{Sept24g})--(\ref{Sept24gaa}) for estimating the
right-hand side in (\ref{2.4n}), we get
\begin{equation}\label{Sept24h}
 |(L(\mu)B\varphi,B\psi)|\leq  C(\wp(\varphi)+\wp(\psi)).
\end{equation}

\subsection{Proof of Theorem \protect\ref{Th1}}

Let $\mu$ satisfies (\ref{Sept28b}). We put
$\widehat{\tau}=\mu^{-1}-\lambda_m^{-1}$. Then
$\mu=\lambda_m-\lambda_m^2\widehat{\tau}+O(\sigma)$. Therefore, from
(\ref{Sept24c}) and (\ref{Sept24e}) it follows that
\begin{equation}\label{Sept29a}
\mu(Q_mB\varphi,B\psi
)=\frac{1-\lambda_m\widehat{\tau}}{\lambda_m}(\Psi_\varphi,\Psi_\psi)+b_2(\varphi,\psi),
\end{equation}
where
$$
b_2(\varphi,\psi)=\mu
b_0(\varphi,\psi)+\widehat{\tau}\frac{\lambda_m-\mu}{\lambda_m}(\Psi_\varphi,\Psi_\psi)
$$
and $b_0$ is given by (\ref{Feb20b}). Using (\ref{Sept24e}) and
(\ref{Sept28b}), we get
\begin{equation}\label{Sept29b}
b_2(\varphi,\psi)\leq C(\wp(\varphi)+\wp(\psi)).
\end{equation}

Using  (\ref{Feb20ab}), (\ref{Sept29a}) and (\ref{Sept29b}) together
with (\ref{Sept24h}), we derive from (\ref{2.4})
\begin{eqnarray}\label{Sept27d}
&&\widehat{\tau}\big ((S\varphi , S\psi
)+(\Psi_\varphi,\Psi_\psi)\big )=\frac{1}{\lambda_m}\Big
((\Psi_\varphi,\Psi_\psi)-(T\varphi,T\psi)\nonumber\\
&&-(\Psi_\varphi,\psi)-(\varphi,\Psi_\psi)\Big ) +b(\varphi,\psi)
\;\;\;\mbox{for all $\psi\in X_m$,}
\end{eqnarray}
where
$$
b(\varphi,\psi)=b_2(\varphi,\psi)+(L(\mu)B\varphi,B\psi)+\langle
T_2\varphi,T_2\psi\rangle.
$$
Due to  (\ref{Sept29b}) and (\ref{Sept24h}), the form $b$ is subject
to
\begin{equation}\label{Sept27a}
|b(\varphi,\psi)|\leq C(\wp(\varphi)+\wp(\psi)).
\end{equation}
Moreover, by (\ref{Sept24b})
$$
|(\Psi_\varphi,\Psi_\psi)|\leq c\sigma (||\varphi||^2+
||\psi||^2).
$$
From (\ref{4.1}) it follows that $||\varphi||^2\leq
(1+c\sigma^{1/2})||S\varphi||^2$. Therefore
\begin{equation}\label{Okt12a}
|(\Psi_\varphi,\Psi_\psi)|\leq c\sigma (||S\varphi||^2+
||S\psi||^2).
\end{equation}

Let $\mu_1^{-1},\ldots,\mu_{J_m}^{-1}$ be the eigenvalues of the
problem (\ref{1.2}) and let $U_1,\ldots,U_{J_m}$ be corresponding
eigenvectors. We assume that the eigenvectors are chosen to satisfy
$(U_j,U_k)=\delta_{j,k}$, where  $\delta_{j,k}$ is the Kronecker
delta. Since $P_m$ is the orthogonal projector with the image
$SX_m$, we can represent $P_mU_j$ as $P_mU_j=SV_j$, where $V_j\in
X_m$. According to (\ref{Okt13a}),
\begin{equation}\label{Okt12c}
(SV_j,SV_k)=\delta_{jk}+{\cal O}(\sigma)\;\;\;\mbox{for
$j,k=1,\ldots,J_m$.}
\end{equation}
Moreover according to the reduction from Sect. \ref{S2},
$\hat{\tau}_j=\mu_j^{-1}-\lambda_m^{-1}$, $j=1,\cdots,J_m$, is an
eigenvalue to problem (\ref{Sept27d}) and $\varphi=V_j$ is the
corresponding eigenvector.

Denote by $\tau_k$, $k=1,\ldots,J_m$, the eigenvalue of the
finite-dimensional problem (\ref{Eig19av1aa}) and by
$\Phi_1,\ldots,\Phi_{J_m}$ corresponding eigenvectors from $X_m$,
which satisfies the bi-orthogonality condition
\begin{equation}\label{Okt12ab}
(S\Phi_j,S\Phi_k)=\delta_{jk}\;\;\;\mbox{for $j,k=1,\ldots,J_m$.}
\end{equation}
Let us show that for each $j=1,\dots,J_m$ there exists $k=k(j)$,
$1\leq k\leq\dots,J_m$ such that
\begin{equation}\label{Okt12d}
(SV_j,S\Phi_{k(j)})\geq c_*,
\end{equation}
where $c_*$ is a positive constant depending on $J_m$. Moreover
the function $k(j)$ is isomorphism. In order to prove these facts
we consider the matrix $A=\{A_{jk}\}$, where
$A_{jk}=(SV_j,S\Phi_k)$, $j,k=1,\ldots,J_m$. Let us show that
\begin{equation}\label{Okt12e}
\det A=1+{\cal O}(\sigma^{1/2})
\end{equation}
Indeed, let $\nu$ be an eigenvalue of the matrix $A$ and ${\bf
a}=(a_1,\ldots,a_{J_m})$ be corresponding eigenvector with the
norm $1$. Then
$$
\sum_k(SV_j,S\Phi_k)a_k=\nu a_j\;\;\mbox{for $j=1,\ldots,J_m$,}
$$
or equivalently $(SV_j,S\Phi)=\nu a_j$, where
$\Phi=\overline{a}_1\Phi_1+\cdots+\overline{a}_{J_m}\Phi_{J_m}$.
Clearly, $||S\Phi||=1$. We chose the constants $b_j$,
$j=1,\ldots,J_m$, such that $\sum b_jV_j=\Phi$. Clearly, the norm of
the vector ${\bf b}=(b_1,\ldots,b_{J_m})$ is equal to $1+{\cal
O}(\sigma)$. Then
$$
\sum_{j=1}^{J_m}b_j(SV_j,S\Phi)=(S\Phi,S\Phi)=\nu
\sum_{j=1}^{J_m}a_jb_j.
$$
This implies $1\leq |\nu|(1+{\cal O}(\sigma))$. Since the last
relation is valid for all eigenvalues of $A$ we obtain
(\ref{Okt12e}). Therefore, there exists an isomorphism $k(j)$ such
that the equality  (\ref{Okt12d}) is valid for $j=1,\cdots,J_m$.
After the re-numeration of eigenvalues $\tau_j$ and corresponding
eigenvectors we can assume that the relations
\begin{equation}\label{Okt12dj}
(SV_j,S\Phi_j)\geq c_*,\;\;\; j=1,\ldots,J_m,
\end{equation}
hold.

Choosing $\varphi=V_j$ and $\psi=\Phi_j$ in (\ref{Sept27d}) we
obtain
\begin{equation}\label{Nov1a}
(\widehat{\tau}_k-\tau_k)(SV_j,S\Phi_j)=b(V_j,\Phi_j)-\widehat{\tau}_j(\Psi_{V_j},\Psi_{\Phi_j}).
\end{equation}
Using relations (\ref{Okt12dj}), (\ref{Sept27a})
 and (\ref{Sept24b})
together with definition (\ref{Sept20c}) of $\rho$, we derive from
(\ref{Nov1a})
\begin{equation}\label{Nov1b}
|\widehat{\tau}_k-\tau_k|\leq C(\max_{\psi\in
X_m,||\psi||=1}\wp(\psi)+|\widehat{\tau}_k|\sigma ).
\end{equation}
By (\ref{Feb10azzz}), we get
\begin{equation}\label{Nov1bz}
|\widehat{\tau}_k-\tau_k|\leq C(\rho+|\widehat{\tau}_k|\sigma ).
\end{equation}
 This implies
\begin{equation}\label{Nov1ba}
|\widehat{\tau}_k-\tau_k|\leq C(\rho+|\tau_k|\sigma )
\end{equation}
and hence (\ref{Sept20d}).

\section{Application to a second order elliptic
equation}\label{SectApl}

Here we consider the spectral problems (\ref{TTN1}) and
(\ref{TTN2}) generated by the bi-linear form (\ref{OperA}).
Instead of (\ref{1.1azz}) it is sufficient to check inequality
(\ref{1.1a}).

\subsection{Local perturbation of the boundary}\label{SectLoca}

{\em Constant $\sigma_*$}.
 Let  $\varepsilon$ be a small positive number. We
assume that there exists a point $x_0$ in $\partial\Omega_1$
such that
\begin{equation}\label{Jan1a}
\Omega_1\setminus
B_\varepsilon(x_0)\subset\Omega_2\subset\Omega_1\cup
B_\varepsilon(x_0).
\end{equation}
We  denote the domain $\Omega_1\cup B_\varepsilon(x_0)$ by
$\Omega_\varepsilon^+(x_0)$. We also assume that for
$u\in\W^{1,2}(\Omega_\varepsilon^+(x_0))$
\begin{equation}\label{Jan8a}
\int_{B_{q\varepsilon}(x_0)\cap
\Omega_\varepsilon^+(x_0)}|u|^2dx\leq
C\varepsilon^2\int_{B_{q\varepsilon}(x_0)\cap
\Omega_\varepsilon^+(x_0)}|\nabla u|^2dx
\end{equation}
for a certain $q>1$ independent of $\varepsilon$. Here the
constant $C$ may depend on $\Omega_1$, $n$, $q$ and the
ellipticity constant $\nu$.

 Let  ${\cal
Z}_\varepsilon(x_0)$ be subspace of function in
$\W^{1,2}(\Omega_\varepsilon^+)$ subject to
\begin{equation}\label{Jan1b}
\sum_{i,j=1}^n\int A_{ij}(x)\partial_{x_j} u\partial_{x_i}
wdx=0\;\;\;\mbox{for all $w\in \W^{1,2}(\Omega_1\setminus
B_\varepsilon(x_0))$.}
\end{equation}
In what follows we will omit the summation sign in  formulas like
(\ref{Jan1b}).

\begin{lem}\label{Lem10a} {\rm (i)} Let $u\in {\cal Z}_{q^{1/2}\varepsilon}(x_0)$.
Then
\begin{equation}\label{Jan8b}
\int_{\Omega_\varepsilon^+(x_0)} |\nabla u|^2dx\leq
C_1\int_{B_{q\varepsilon}(x_0)\cap (\Omega_\varepsilon^+(x_0)}
|\nabla u|^2dx.
\end{equation}

\noindent {\rm (ii)} Let $q_*\in [1,q^{1/2}]$ and let
$T_{q_*\varepsilon}(x_0)$ be orthogonal projector from
$\W^{1,2}(\Omega_\varepsilon^+(x_0))$ onto ${\cal
Z}_{q_*\varepsilon}(x_0)$. Then
\begin{equation}\label{Jan8bk}
\int_{\Omega_\varepsilon^+(x_0)} |\nabla
(T_{q_*\varepsilon}(x_0)u)|^2dx\leq
C_1\int_{B_{q\varepsilon}(x_0)\cap (\Omega_\varepsilon^+(x_0)}
|\nabla u|^2dx.
\end{equation}
The constant $C_1$ in {\rm (i)} and {\rm (ii)} may depend on $n$,
$\nu$, $q$ and $\Omega_1$.
\end{lem}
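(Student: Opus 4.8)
\emph{Plan of the proof.} Both inequalities will come from a Caccioppoli--type argument exploiting the orthogonality relation (\ref{Jan1b}) that defines the spaces ${\cal Z}_{r\varepsilon}(x_0)$, the ellipticity (\ref{Jan3a}) and the Poincar\'e inequality (\ref{Jan8a}). I would first record two elementary facts to be used repeatedly: since $1\le q_*\le q^{1/2}<q$ the balls nest as $B_\varepsilon(x_0)\subseteq B_{q_*\varepsilon}(x_0)\subseteq B_{q^{1/2}\varepsilon}(x_0)\subseteq B_{q\varepsilon}(x_0)$; and for every $r\ge 1$ one has $\Omega_\varepsilon^+(x_0)\setminus B_{r\varepsilon}(x_0)=\Omega_1\setminus B_{r\varepsilon}(x_0)$, so that any $z\in\W^{1,2}(\Omega_\varepsilon^+(x_0))$ vanishing in a neighbourhood of $\overline{B_{r\varepsilon}(x_0)}$ belongs to $\W^{1,2}(\Omega_1\setminus B_{r\varepsilon}(x_0))$ and hence is an admissible choice of $w$ in (\ref{Jan1b}). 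I will also use the bound $|\sum_j A_{ij}\xi_j|\le\nu^{-1}|\xi|$, which follows from (\ref{Jan3a}).

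\emph{Part (i).} I would pick a cut-off $\eta\in C^\infty(\Bbb R^n)$ with $\eta\equiv 0$ near $\overline{B_{q^{1/2}\varepsilon}(x_0)}$, $\eta\equiv 1$ outside $B_{q\varepsilon}(x_0)$ and $|\nabla\eta|\le C\varepsilon^{-1}$ (possible because $q-q^{1/2}$ is a fixed positive number), and test (\ref{Jan1b}) for $u\in{\cal Z}_{q^{1/2}\varepsilon}(x_0)$ against $w=\eta^2u$, which is admissible by the remark above. Expanding $\partial_{x_i}(\eta^2\overline u)$, bounding the term carrying $\eta^2$ from below by $\nu\int\eta^2|\nabla u|^2$ (here $A_{ij}=A_{ji}$ is used) and estimating the cross term by Cauchy--Schwarz and Young's inequality, I expect to reach
\[
\tfrac{\nu}{2}\int\eta^2|\nabla u|^2\,dx\le C\varepsilon^{-2}\int_{B_{q\varepsilon}(x_0)\cap\Omega_\varepsilon^+(x_0)}|u|^2\,dx ,
\]
the right-hand integral being over the support of $\nabla\eta$, which lies in $B_{q\varepsilon}(x_0)$. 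Then (\ref{Jan8a}) turns the right-hand side into $C\int_{B_{q\varepsilon}(x_0)\cap\Omega_\varepsilon^+(x_0)}|\nabla u|^2\,dx$, and since $\eta\equiv 1$ on $\Omega_\varepsilon^+(x_0)\setminus B_{q\varepsilon}(x_0)$, adding $\int_{B_{q\varepsilon}(x_0)\cap\Omega_\varepsilon^+(x_0)}|\nabla u|^2\,dx$ to both sides gives (\ref{Jan8b}).

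\emph{Part (ii).} Here I would set $v=T_{q_*\varepsilon}(x_0)u\in{\cal Z}_{q_*\varepsilon}(x_0)$ and use the projection identity $(v,v)=(u,v)$ together with $\nu\int_{\Omega_\varepsilon^+(x_0)}|\nabla v|^2\le(v,v)$. Choosing a cut-off $\zeta$ with $\zeta\equiv 1$ near $\overline{B_{q_*\varepsilon}(x_0)}$, $\zeta\equiv 0$ outside $B_{q\varepsilon}(x_0)$ and $|\nabla\zeta|\le C\varepsilon^{-1}$, the function $(1-\zeta)u$ vanishes near $\overline{B_{q_*\varepsilon}(x_0)}$, so by orthogonality of ${\cal Z}_{q_*\varepsilon}(x_0)$ to $\W^{1,2}(\Omega_1\setminus B_{q_*\varepsilon}(x_0))$ one gets $(v,(1-\zeta)u)=0$, hence $(v,v)=(\zeta u,v)$. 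Writing $\partial_{x_j}(\zeta u)=\zeta\partial_{x_j}u+u\,\partial_{x_j}\zeta$, both resulting integrals are supported in $B_{q\varepsilon}(x_0)$; bounding both by Cauchy--Schwarz, using $|\nabla\zeta|\le C\varepsilon^{-1}$ and then (\ref{Jan8a}) applied to $u$ for the one containing $\nabla\zeta$, I expect
\[
(v,v)\le C\Big(\int_{B_{q\varepsilon}(x_0)\cap\Omega_\varepsilon^+(x_0)}|\nabla u|^2\,dx\Big)^{1/2}\Big(\int_{B_{q\varepsilon}(x_0)\cap\Omega_\varepsilon^+(x_0)}|\nabla v|^2\,dx\Big)^{1/2}.
\]
Since the second factor is at most $\nu^{-1/2}(v,v)^{1/2}$, cancelling $(v,v)^{1/2}$ leaves $(v,v)\le C\int_{B_{q\varepsilon}(x_0)\cap\Omega_\varepsilon^+(x_0)}|\nabla u|^2\,dx$, and then $\int_{\Omega_\varepsilon^+(x_0)}|\nabla v|^2\le\nu^{-1}(v,v)$ yields (\ref{Jan8bk}); one could instead finish by invoking part (i) for $v\in{\cal Z}_{q_*\varepsilon}(x_0)\subseteq{\cal Z}_{q^{1/2}\varepsilon}(x_0)$, but that is not necessary.

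\emph{Main difficulty.} The bilinear-form manipulations are routine. What needs care is, in both parts, verifying that the cut-off test functions really belong to $\W^{1,2}(\Omega_1\setminus B_{r\varepsilon}(x_0))$ so that (\ref{Jan1b}) may be applied, and, in part (ii), the self-improving step where one uses (\ref{Jan8a}) to trade $|u|^2$ for $\varepsilon^2|\nabla u|^2$ and then divides out $(v,v)^{1/2}$ to absorb the $\nabla v$ factor. One also has to keep track that all cut-offs can be chosen with gradients $\le C\varepsilon^{-1}$ with $C$ depending only on $q$ and $n$; this, together with the constant in (\ref{Jan8a}), gives the dependence of $C_1$ stated in the lemma.
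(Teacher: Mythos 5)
Your proof is correct, and it uses the same basic ingredients as the paper (the defining orthogonality (\ref{Jan1b}), ellipticity (\ref{Jan3a}), the Poincar\'e inequality (\ref{Jan8a}) and cut-offs whose gradients are of size $\varepsilon^{-1}$), but the mechanics differ, most visibly in part (ii). For (i) the paper fixes a bump $\eta_\varepsilon$ equal to $1$ on $B_{q^{1/2}\varepsilon}(x_0)$, writes $u=\eta_\varepsilon u+u_\varepsilon$ with $u_\varepsilon=(1-\eta_\varepsilon)u\in\W^{1,2}(\Omega_1\setminus B_{q^{1/2}\varepsilon}(x_0))$, and tests (\ref{Jan1b}) with $w=u_\varepsilon$; you test instead with $w=\eta^2 u$ for a complementary cut-off. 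This is the same Caccioppoli idea, merely packaged differently, and both give the estimate after appealing to (\ref{Jan8a}). For (ii) the paper's route is to introduce $U_\varepsilon=T_{q_*\varepsilon}(x_0)u-\eta_\varepsilon u$, establish that $U_\varepsilon$ actually lies in $\W^{1,2}(\Omega_1\setminus B_{q_*\varepsilon}(x_0))$ (which rests on the structural fact that $(I-T_{q_*\varepsilon}(x_0))u$ vanishes on $B_{q_*\varepsilon}(x_0)\cap\Omega_\varepsilon^+(x_0)$), and then estimate $U_\varepsilon$ and $\eta_\varepsilon u$ separately. Your argument sidesteps this decomposition entirely: from the projector identity $(v,v)=(u,v)$, the orthogonality $(v,(1-\zeta)u)=0$, Cauchy--Schwarz, and an absorption of $\|\nabla v\|$ you reach (\ref{Jan8bk}) directly, without ever needing the pointwise fact that $T_{q_*\varepsilon}(x_0)u$ agrees with $u$ near $x_0$. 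Both routes are valid; yours for (ii) is a touch more economical, while the paper's decomposition makes the localization of $T_{q_*\varepsilon}(x_0)u$ explicit, which is convenient for the later use of the lemma.

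Two small points you should make explicit if you write this up. In (i), to invoke (\ref{Jan8a}) on $\varepsilon^{-2}\int|u|^2$ over $\mathrm{supp}\,\nabla\eta$ you must note that $\mathrm{supp}\,\nabla\eta\cap\Omega_\varepsilon^+(x_0)\subset B_{q\varepsilon}(x_0)\cap\Omega_\varepsilon^+(x_0)$, since (\ref{Jan8a}) is stated over exactly that set. In (ii), the cut-off must satisfy $\zeta\equiv 1$ on a neighbourhood of $\overline{B_{q_*\varepsilon}(x_0)}$ while $\zeta\equiv 0$ outside $B_{q\varepsilon}(x_0)$; this is possible with $|\nabla\zeta|\le C\varepsilon^{-1}$ uniformly in $q_*\in[1,q^{1/2}]$ because $q-q^{1/2}$ is a fixed positive number, which is the source of the $q$-dependence of $C_1$.
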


\begin{proof} We introduce a smooth function $\eta=\eta(t)$  which is equal to
$1$ for $t<q^{1/2}$ and to $0$ for $t>q$ and let $\eta_\varepsilon
(x)=\eta (|x-x_0|/\varepsilon)$.

(i) Put $u_\varepsilon=u-\eta_\varepsilon u$. One can check that
$u_\varepsilon\in\W^{1,2}(\Omega_1\setminus
B_{q^{1/2}\varepsilon}(x_0))$. From (\ref{Jan1b}) it follows that
\begin{equation}\label{Jan1c}
\int A_{ij}\partial_{x_j} u_\varepsilon\partial_{x_j}
wdx\!\!=\!\!-\!\!\int A_{ij}\partial_{x_j} (\eta_\varepsilon
u)\partial_{x_i} wdx\;\;\;\mbox{for all $w\in
\W^{1,2}(\Omega_1\setminus B_{q^{1/2}\varepsilon}(x_0))$.}
\end{equation}
Taking $w=u_\varepsilon$ in (\ref{Jan1c}) we obtain
\begin{equation}\label{Jan1d}
\int A_{ij}\partial_{x_j} u_\varepsilon\partial_{x_i}
u_\varepsilon dx=-\int A_{ij}\partial_{x_j} (\eta_\varepsilon
u)\partial_{x_i} u_\varepsilon dx,
\end{equation}
which together with (\ref{Jan3a}) implies
$$
\int |\nabla u_\varepsilon|^2dx\leq \nu^{-4}\int |\nabla
(\eta_\varepsilon u)|^2dx.
$$
Using (\ref{Jan8a}), we derive  the following estimate from the
last inequality:
$$
\int_{\Omega_1} |\nabla u_\varepsilon|^2dx\leq
C_1\int_{B_{q\varepsilon}(x_0)\cap (\Omega_\varepsilon^+(x_0)}
|\nabla u|^2dx.
$$
Similar estimate for $\eta_\varepsilon u$ follows from
(\ref{Jan8a}), which together with the estimate for
$u_\varepsilon$ leads to (\ref{Jan8b}).

(ii) Let $U_\varepsilon=T_{q_*\varepsilon}(x_0)u-\eta_\varepsilon
u$. Then  $U_\varepsilon(x)\in\W^{1,2}(\Omega_1\setminus
B_{q_*\varepsilon}(x_0))$ and $U_\varepsilon+\eta_\varepsilon u\in
{\cal Z}_{q_*\varepsilon}(x_0)$. Therefore,
\begin{equation*}
\int A_{ij}\partial_{x_j}U_\varepsilon\partial_{x_j}wdx=-\int
A_{ij}\partial_{x_j}(\eta_\varepsilon
u)\partial_{x_j}wdx\;\;\mbox{for all
$w\in\W^{1,2}(\Omega_1\setminus B_{q_*\varepsilon}(x_0))$.}
\end{equation*}
Taking here $w=U_\varepsilon$ and using  H\"older inequality along
with inequalities (\ref{Jan3a}), we obtain
$$
\int |\nabla U_\varepsilon|^2dx\leq \nu^{-4}\int |\nabla
(\eta_\varepsilon u)|^2dx\leq C\int_{B_{q\varepsilon}(x_0)\cap
(\Omega_\varepsilon^+(x_0)} |\nabla u|^2dx.
$$
To get the last inequality we applied (\ref{Jan8a}). The estimate
of $\eta_\varepsilon u$ by the right-hand side of the last
inequality follows from (\ref{Jan8a}). Since
$T_{q_*\varepsilon}(x_0)u=\eta_\varepsilon u+U_\varepsilon$, the
above two estimates give (\ref{Jan8bk}). The proof is complete.
\end{proof}

Now we are in position to prove the following

\begin{prop}\label{PropJan13a} There exists a function $\sigma_*=\sigma_*(\varepsilon)$
such that $\sigma_*(\varepsilon)\to 0$ as $\varepsilon\to 0$ and
for all $u\in {\cal Z}_\varepsilon(x_0)$
\begin{equation}\label{Jan4gg}
\int_{\Omega_\varepsilon^+(x_0)}|u|^2dx\leq \sigma
\int_{\Omega_\varepsilon^+(x_0)}|\nabla u|^2dx.
\end{equation}
\end{prop}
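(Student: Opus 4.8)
The plan is to show that the Poincar\'e-type inequality \eqref{Jan4gg} holds on the subspace ${\cal Z}_\varepsilon(x_0)$ with a constant $\sigma_*(\varepsilon)$ that degenerates to zero, and the mechanism should be: outside the ball $B_{q\varepsilon}(x_0)$ the functions in ${\cal Z}_\varepsilon(x_0)$ are harmonic (with respect to the form $A$) relative to the unperturbed domain, so by an energy comparison their mass is controlled by the local energy near $x_0$; inside $B_{q\varepsilon}(x_0)$ the small-ball Poincar\'e inequality \eqref{Jan8a} gives the factor $\varepsilon^2$; combining, one gets an inequality of the type $\int|u|^2\le C\varepsilon^2\int|\nabla u|^2$ on all of $\Omega_\varepsilon^+(x_0)$, but one must be slightly careful because \eqref{Jan8a} only controls the $L^2$ mass \emph{inside} the ball, not the global mass. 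This is exactly where Lemma \ref{Lem10a}(i) enters: it upgrades local energy control to global energy control for functions in ${\cal Z}_{q^{1/2}\varepsilon}(x_0)$.

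First I would reduce to the case of ${\cal Z}_{q^{1/2}\varepsilon}(x_0)$, or rather observe that ${\cal Z}_\varepsilon(x_0)\subset{\cal Z}_{q^{1/2}\varepsilon}(x_0)$ does \emph{not} hold in general (a larger excision ball gives fewer constraints, hence a larger space, so in fact ${\cal Z}_{\varepsilon}(x_0)\supset{\cal Z}_{q^{1/2}\varepsilon}(x_0)$); so I would instead work directly with $u\in{\cal Z}_\varepsilon(x_0)$ and use the cutoff $\eta_\varepsilon$ from the proof of Lemma \ref{Lem10a}. Write $u=\eta_\varepsilon u+(1-\eta_\varepsilon)u=:u'+u''$, where $u''\in\W^{1,2}(\Omega_1\setminus B_{q^{1/2}\varepsilon}(x_0))$. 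For $u''$ I would repeat the energy argument of Lemma \ref{Lem10a}(i): testing the orthogonality relation \eqref{Jan1b} (valid since $\supp u''$ avoids $B_\varepsilon(x_0)$) against $u''$ itself gives $\int|\nabla u''|^2\le \nu^{-4}\int|\nabla(\eta_\varepsilon u)|^2$, and then \eqref{Jan8a} bounds the right side by $C\varepsilon^2\int_{B_{q\varepsilon}\cap\Omega_\varepsilon^+}|\nabla u|^2$; so actually one directly gets \eqref{Jan8b}-type control $\int_{\Omega_\varepsilon^+}|\nabla u|^2\le C\int_{B_{q\varepsilon}(x_0)\cap\Omega_\varepsilon^+}|\nabla u|^2$ for all $u\in{\cal Z}_\varepsilon(x_0)$.

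Next, for the $L^2$ mass: split $\int_{\Omega_\varepsilon^+}|u|^2 = \int_{B_{q\varepsilon}\cap\Omega_\varepsilon^+}|u|^2 + \int_{\Omega_\varepsilon^+\setminus B_{q\varepsilon}}|u|^2$. The first term is $\le C\varepsilon^2\int_{B_{q\varepsilon}\cap\Omega_\varepsilon^+}|\nabla u|^2 \le C\varepsilon^2\int_{\Omega_\varepsilon^+}|\nabla u|^2$ by \eqref{Jan8a}. For the second term, on $\Omega_\varepsilon^+\setminus B_{q\varepsilon}(x_0)$ we have $u=u''\in\W^{1,2}(\Omega_1\setminus B_{q^{1/2}\varepsilon}(x_0))\subset\W^{1,2}(D)$ for a fixed bounded domain $D\supset\Omega_1$; I would use the global Poincar\'e inequality on $D$ together with the fact that the energy of $u''$ is \emph{also} controlled by the local energy near $x_0$ (from the step above). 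Here is the point where the degeneration $\sigma_*(\varepsilon)\to 0$ has to come out: since $u''$ vanishes outside a fixed bounded set and its gradient concentrates — its total energy is bounded by energy on a shrinking region — one expects, via a compactness/normalization argument, that $\|u''\|_{L^2(\Omega_\varepsilon^+\setminus B_{q\varepsilon})}^2 = o(1)\cdot\int|\nabla u''|^2$ as $\varepsilon\to0$. Concretely I would argue by contradiction: if not, take $\varepsilon_j\to0$ and $u_j\in{\cal Z}_{\varepsilon_j}(x_0)$ normalized by $\int|\nabla u_j|^2=1$ with $\int_{\Omega^+_{\varepsilon_j}\setminus B_{q\varepsilon_j}}|u_j|^2\ge\delta>0$; extract a weak $\W^{1,2}(D)$ limit $u_*$, note the concentration forces $\int_D|\nabla u_*|^2=0$ hence $u_*=0$, while $u_j\to u_*$ strongly in $L^2$ by compact embedding, contradicting $\delta>0$. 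Assembling the two pieces yields \eqref{Jan4gg} with $\sigma_*(\varepsilon)=C\varepsilon^2+o(1)\to0$.

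The main obstacle is this last compactness step: one must be careful that the functions $u_j$ live in different spaces ${\cal Z}_{\varepsilon_j}(x_0)$ but all embed into the \emph{fixed} space $\W^{1,2}(D)$ where $D$ is any bounded domain containing all the $\Omega^+_{\varepsilon_j}(x_0)$ (possible since $\Omega^+_\varepsilon\subset\Omega_1\cup B_1(x_0)$ for small $\varepsilon$), so the compact embedding $\W^{1,2}(D)\hookrightarrow L^2(D)$ applies uniformly; and one needs the defining orthogonality \eqref{Jan1b} to pass to the limit, which it does because the test-function spaces $\W^{1,2}(\Omega_1\setminus B_{\varepsilon_j}(x_0))$ exhaust $\W^{1,2}(\Omega_1)$ — forcing $u_*$ to be $A$-harmonic on all of $\Omega_1$ with zero boundary trace, hence $u_*=0$ even without invoking the energy concentration. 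That closes the argument and makes the $o(1)$ explicit as a convergence statement rather than a rate.
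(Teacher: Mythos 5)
Your route is genuinely different from the paper's. The paper proves the Poincar\'e-type bound \eqref{Jan4gg} \emph{constructively}: it cuts off near $x_0$, expands the resulting function $u_\varepsilon\in\W^{1,2}(\Omega_1)$ in the Dirichlet eigenbasis $\{\varphi_k\}$ of \eqref{TTN1}, observes that $T_{q^{1/2}\varepsilon}u_\varepsilon=u_\varepsilon$ so that the low-mode coefficients equal $(u_\varepsilon,T_{q^{1/2}\varepsilon}\varphi_k)$, controls $\|T_{q^{1/2}\varepsilon}\varphi_k\|$ by the local gradient integral via Lemma \ref{Lem10a}(ii), and ends up with the explicit quantity
$\sigma_1(\varepsilon)=\inf_N\bigl(\lambda_{N+1}^{-1}+C\sum_{k\le N}\lambda_k^{-1}\int_{B_{q\varepsilon}(x_0)}|\nabla\varphi_k|^2\,dx\bigr)\to 0$.
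You instead argue by contradiction using weak $W^{1,2}$ compactness: normalize a putative bad sequence $u_j\in{\cal Z}_{\varepsilon_j}(x_0)$, extract a weak limit $u_*$ in a fixed ambient $W^{1,2}(D)$, identify $u_*$ as an $A$-harmonic element of $\W^{1,2}(\Omega_1)$ (hence $u_*=0$), and use Rellich to contradict the lower $L^2$ bound. Both arguments are sound and both ultimately rely on the same two ingredients (local Poincar\'e \eqref{Jan8a} and the energy-localization Lemma \ref{Lem10a}); the trade-off is that the paper's proof yields a concrete, near-optimal formula for $\sigma_*(\varepsilon)$ in terms of the spectral data and the concentration of $|\nabla\varphi_k|^2$ near $x_0$, whereas the compactness argument gives only qualitative decay $\sigma_*(\varepsilon)\to 0$, at the cost of being shorter conceptually.

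Two remarks on your write-up. First, a small slip: you assert ${\cal Z}_\varepsilon(x_0)\supset{\cal Z}_{q^{1/2}\varepsilon}(x_0)$ after correctly reasoning that a larger excision ball gives fewer constraints and hence a \emph{larger} space; that reasoning gives the opposite containment ${\cal Z}_\varepsilon(x_0)\subset{\cal Z}_{q^{1/2}\varepsilon}(x_0)$, so in fact you could have invoked Lemma \ref{Lem10a}(i) directly without re-running the cutoff energy estimate. This does not affect the validity of your argument, only its length. Second and more substantively, the step ``hence $u_*\in\W^{1,2}(\Omega_1)$'' is where all the technical weight sits and it deserves an explicit appeal to capacity: $u_j$ lives in $\W^{1,2}(\Omega_{\varepsilon_j}^+(x_0))$, and since these spaces are decreasing and weakly closed, $u_*\in\bigcap_J\W^{1,2}(\Omega_{\varepsilon_J}^+(x_0))$; one then needs that adding a ball shrinking to the single boundary point $x_0$ (which has zero $W^{1,2}$-capacity since $n\ge 2$) does not enlarge the $H^1_0$ space, i.e.\ that $\bigcap_J\W^{1,2}(\Omega_1\cup B_{\varepsilon_J}(x_0))=\W^{1,2}(\Omega_1)$. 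You flag the exhaustion of test-function spaces (which handles the $A$-harmonicity of $u_*$, also a zero-capacity argument), but the statement that $u_*$ has zero boundary trace on $\partial\Omega_1$ is a separate point requiring the same capacity input. Making this explicit, and noting that it is precisely here that $n\ge 2$ is used, would close the argument cleanly.
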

\begin{proof}
 Let $\lambda_k$, $k=1,\ldots,$ be eigenvalues of the problem
(\ref{TTN1}) and let $\varphi_k$ be corresponding eigenfunctions
normalized by $||\varphi||_{W^{1,2}(\Omega_1)}=1$. We represent
$u$ as $u=\eta_{q^{-1/2}\varepsilon} u+u_\varepsilon$. Then
$u_\varepsilon\in\W^{1,2}(\Omega_1)$ and $T_{q^{1/2}\varepsilon}
u_\varepsilon=u_\varepsilon$. Therefore we may represent $u$ as
$$
u_\varepsilon=\sum_{k=N+1}^\infty(u_\varepsilon,\varphi_k)\varphi_k
+\sum_{k=1}^N(u_\varepsilon,T_{q^{1/2}\varepsilon}\varphi_k)\varphi_k.
$$
 Since $\{\varphi_k\}_{k\geq 1}$ is an orthogonal basis in
$L^2(\Omega_1)$ also and
$||\varphi_k||_{L^2(\Omega_1)}^2=\lambda_k^{-1}$, we have
\begin{eqnarray*}
&&||u_\varepsilon||_{L^2(\Omega_1)}^2\leq
\frac{1}{\lambda_{N+1}}||u_\varepsilon||_{W^{1,2}(\Omega_1)}^2+\sum_{k=1}^N\frac{1}{\lambda_k}
|(u_\varepsilon,T_{q^{1/2}\varepsilon}\varphi_k)|^2\\
&&\leq\Big
(\frac{1}{\lambda_{N+1}}+\sum_{k=1}^N\frac{1}{\lambda_k}||T_{q^{1/2}\varepsilon}
\varphi_k||_{W^{1,2}(\Omega_1)}^2\Big
)||u_\varepsilon||_{W^{1,2}(\Omega_1)}^2.
\end{eqnarray*}
Using (\ref{Jan8b}) we get
$$
||u_{\varepsilon}||_{L^2(\Omega_1)}^2\leq
(\frac{1}{\lambda_{N+1}}+C\sum_{k=1}^N\frac{1}{\lambda_k}
\int_{\Omega_1\setminus\Omega_{q\varepsilon}}|\nabla\varphi_k|^2dx\Big
)||u_\varepsilon||_{W^{1,2}(\Omega_1)}^2
$$
Let
$$
\sigma_1=\inf \Big
(\frac{1}{\lambda_{N+1}}+C\sum_{k=1}^N\frac{1}{\lambda_k}
\int_{\Omega_1\setminus\Omega_{q\varepsilon}}|\nabla\varphi_k|^2dx\Big
).
$$
then $\sigma_1(\varepsilon)\to 0$ as $\varepsilon\to 0$ and by the
last inequality for $u_\varepsilon$ combined with (\ref{Jan8a}),
we obtain
$$
||u_\varepsilon||_{L^2(\Omega_1)}^2\leq\sigma_1||\nabla
(u-\eta_{q^{-1/2}\varepsilon}u)||_{W^{1,2}(\Omega_\varepsilon^+)}\leq
C_1(\sigma_1+\varepsilon^2)||\nabla
u||^2_{W^{1,2}(\Omega_\varepsilon^+)}.
$$
Using (\ref{Jan8a}) again, we get
$$
||\eta_{q^{-1/2}\varepsilon}
u||_{L^2(\Omega_\varepsilon^+(x_0))}^2\leq C_2\varepsilon^2
\int_{\Omega_\varepsilon^+(x_0)\setminus
B_{q\varepsilon}(x_0)}|\nabla u|^2dx.
$$
Now setting $\sigma=2C_1(\sigma_1+\varepsilon)+2C_2\varepsilon^2$
and using the representation $u=\eta_{q^{-1/2}\varepsilon}
u+u_\varepsilon$ we arrive at (\ref{Jan4gg}).
\end{proof}

\begin{Rem} In the case $\Omega_2\subset\Omega_1$ {\rm Lemma \ref{Lem10a}}
and {\rm Proposition \ref{PropJan13a}} are valid if
$\Omega_\varepsilon(x_0)$ is replaced by $\Omega_1$ in their
formulations and in {\rm (\ref{Jan8a})}.

\end{Rem}

\begin{Rem} Let  several points $x_1,\ldots, x_m$ be given on the
boundary $\partial\Omega_1$. Let also $\varepsilon$ be a small
positive number, $q>1$ and inequality {\rm (\ref{Jan8a})} be valid
for all points $x_1,\ldots,x_m$. Then {\rm Lemma \ref{Lem10a}} and
{\rm Proposition \ref{PropJan13a}} remain true for perturbation
$\Omega_2$ subject to
$$
\Omega_1\setminus
\bigcup_{j=1}^mB_\varepsilon(x_j)\subset\Omega_2\subset\Omega_1\cup
\bigcup_{j=1}^mB_\varepsilon(x_j)
$$
if we replace $\Omega_\varepsilon^+(x_0)$ by $\Omega_1\cup
\bigcup_{j=1}^mB_\varepsilon(x_j)$ and $B_{q\varepsilon}(x_0)$ by
$\bigcup_{j=1}^mB_{q\varepsilon}(x_j)$ in their formulations.
\end{Rem}

{\em Estimates of the function $u=T\varphi$, $\varphi\in X_m$.}
Since $TT_\varepsilon=T_\varepsilon T=T$, we have that
\begin{equation}\label{Jan30a}
\int A_{ij}\partial_{x_j}(T\varphi)\partial_{x_i}(T\varphi)dx\leq
\int
A_{ij}\partial_{x_j}(T_\varepsilon\varphi)\partial_{x_i}(T_\varepsilon\varphi)dx.
\end{equation}
This together with Lemma \ref{Lem10a}(ii) gives
\begin{equation}\label{Nov9b}
||\nabla (T\varphi)||_{L^2(\Omega_2)}\leq
c||\nabla(T\varphi)||_{L^2(B_{q\varepsilon}
(x_0)\cap\Omega_\varepsilon^+)}.
\end{equation}

{\em Estimate of the function $\Psi=\Psi_\varphi$.} We seek $\Psi$
in the form $\Psi=\eta_\varepsilon(x)+v$, where $\eta=\eta(t)$ is
a smooth function equals $1$ for $t<q_1=(1+q)/2$ and $0$ for
$t>q_2=(1+q_1)/2$. Then the function $v$ belongs to
$\W^{1,2}(\Omega_2)$ and satisfies
\begin{equation}\label{Nov10a}
(v,w)=-\lambda_m\langle \varphi,\eta_\epsilon
w\rangle+\int_{\Omega_2}A_{ij}\partial_{x_j}
v\,w\partial_{x_i}\eta_\epsilon
dx-\int_{\Omega_2}vA_{ij}\partial_{x_j}\eta_\varepsilon\,\partial_{x_i}
wdx.
\end{equation}
Applying H\"older's inequality to the left-hand side of
(\ref{Nov10a}) and using then (\ref{Jan8a})  we arrive at
\begin{equation}\label{Nov10c}
||\nabla v||_{L^2(\Omega_2)}\leq
c||\nabla\varphi||_{L^2(\Omega_1\cap B_{q\delta}(x_2))}.
\end{equation}
Applying (\ref{Jan8a}), we get an estimate of $\eta_\varepsilon$
by the right-hand side of (\ref{Nov10c}). Combining these two
estimates, we obtain
\begin{equation}\label{Nov10d}
||\nabla \Psi||_{L^2(\Omega_2)}\leq
c||\nabla\varphi||_{L^2(\Omega_1\cap B_{q\varepsilon}(x_0))}.
\end{equation}

Using estimates (\ref{Nov9b}), (\ref{Nov10d}) and Corollary
\ref{Kor2}, we arrive at
\begin{kor} Under the assumptions on $\Omega_1$ and $\Omega_2$ of this
section, the following estimate for the eigenvalues of the
problems {\rm (\ref{TTN1})} and {\rm (\ref{TTN2})} holds
$$
|\lambda_m^{-1}-\mu_k^{-1}|\leq C\max_{\varphi\in X_m,
||\varphi||=1}||\nabla\varphi||_{L^2(\Omega_1\cap
B_{q\varepsilon}(x_0))}^2,
$$
where $\mu_1,\ldots,\mu_{J_m}$ are eigenvalues of {\rm
(\ref{TTN2})} located near $\lambda_m^{-1}$, see {\rm Proposition
\ref{Pr1}}.
\end{kor}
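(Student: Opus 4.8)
The plan is to reduce the assertion to Corollary~\ref{Kor2} and then feed into it the two localized bounds (\ref{Nov9b}) and (\ref{Nov10d}) established just above, after first checking that the abstract smallness hypothesis on the proximity parameter is met. For the latter: by Proposition~\ref{PropJan13a} the constant $\sigma_*=\sigma_*(\varepsilon)$ in (\ref{1.1a}) tends to $0$ as $\varepsilon\to0$ — every $u\in H_1+H_2$ orthogonal to $H_1\cap H_2=\W^{1,2}(\Omega_1\cap\Omega_2)$ belongs to ${\cal Z}_\varepsilon(x_0)$, since under (\ref{Jan1a}) one has $\W^{1,2}(\Omega_1\setminus B_\varepsilon(x_0))\subset H_1\cap H_2$ — and by (\ref{Feb10a}) we get $\sigma\le4\sigma_*\to0$. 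Hence Corollary~\ref{Kor2} applies and gives $|\lambda_m^{-1}-\mu_k^{-1}|\le C\rho_0$ with $\rho_0=\max_{\varphi\in X_m,\,\|\varphi\|=1}\bigl(\|T_0\varphi\|^2+\|\Psi_\varphi\|^2\bigr)$, so it only remains to estimate $\rho_0$ by the claimed quantity.

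Next I would bound the two ingredients of $\rho_0$ for a fixed $\varphi\in X_m$ with $\|\varphi\|=1$, using ellipticity (\ref{Jan3a}) to pass freely between the form norm $\|\cdot\|$ and the Dirichlet integral. For the $\Psi$-term this is immediate from (\ref{Nov10d}): $\|\Psi_\varphi\|^2\le\nu^{-1}\|\nabla\Psi_\varphi\|_{L^2(\Omega_2)}^2\le C\|\nabla\varphi\|_{L^2(\Omega_1\cap B_{q\varepsilon}(x_0))}^2$. For the $T_0$-term, $\|T_0\varphi\|$ is the $\|\cdot\|$-distance from $\varphi$ to $H_1\cap H_2$; taking the cut-off $\eta_\varepsilon$ (equal to $1$ on a neighbourhood of $\overline{B_\varepsilon(x_0)}$ and supported in $B_{q\varepsilon}(x_0)$, as in the proof of Lemma~\ref{Lem10a}), the function $(1-\eta_\varepsilon)\varphi$ lies in $\W^{1,2}\bigl(\Omega_1\setminus B_\varepsilon(x_0)\bigr)\subset H_1\cap H_2$, whence $\|T_0\varphi\|\le\|\eta_\varepsilon\varphi\|$. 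Expanding $\nabla(\eta_\varepsilon\varphi)$ by the Leibniz rule gives $\|\eta_\varepsilon\varphi\|^2\le C\int_{\Omega_1\cap B_{q\varepsilon}(x_0)}|\nabla\varphi|^2\,dx+C\varepsilon^{-2}\int_{\Omega_1\cap B_{q\varepsilon}(x_0)}|\varphi|^2\,dx$, and the Poincar\'e-type inequality (\ref{Jan8a}) applied to $u=\varphi$ bounds the last integral by $C\varepsilon^2\int_{\Omega_1\cap B_{q\varepsilon}(x_0)}|\nabla\varphi|^2\,dx$, so the $\varepsilon^{-2}$-weighted term is again $\le C\int_{\Omega_1\cap B_{q\varepsilon}(x_0)}|\nabla\varphi|^2\,dx$; the Caccioppoli bound (\ref{Nov9b}) yields the parallel localization of $\|\nabla T_2\varphi\|_{L^2}$ directly. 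Taking the maximum over $\{\varphi\in X_m:\|\varphi\|=1\}$ gives $\rho_0\le C\max_{\varphi\in X_m,\,\|\varphi\|=1}\|\nabla\varphi\|_{L^2(\Omega_1\cap B_{q\varepsilon}(x_0))}^2$, and together with the first step this is the asserted inequality.

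The one point to watch is the mismatch between the operator $T_0=I-S_0$ entering $\rho_0$ (complementary to $H_1\cap H_2$) and the operator $T_2=I-S_2$ for which (\ref{Nov9b}) was proved: since $H_1\cap H_2\subsetneq H_2$ one has $\|T_0\varphi\|\ge\|T_2\varphi\|$, so (\ref{Nov9b}) alone does not control $\|T_0\varphi\|$. This is resolved precisely because hypothesis (\ref{Jan1a}) confines $\Omega_1\bigtriangleup\Omega_2$ to $B_\varepsilon(x_0)$: the competitor $(1-\eta_\varepsilon)\varphi$ used above automatically lies in $H_1\cap H_2$, and the defect $\|\eta_\varepsilon\varphi\|$ it produces is swallowed by (\ref{Jan8a}). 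Everything else is routine manipulation of the constants together with repeated use of (\ref{Jan3a}).
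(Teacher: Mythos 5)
Your proof is correct and follows the paper's intended route: verify that $\sigma$ is small via Proposition~\ref{PropJan13a} and the bound $\sigma\le 4\sigma_*$, invoke Corollary~\ref{Kor2} to reduce the claim to an estimate of $\rho_0$, and then localize $\|\Psi_\varphi\|$ and $\|T_0\varphi\|$ to $B_{q\varepsilon}(x_0)$. Where you diverge slightly from the paper is in how you bound the $T_0$-contribution. You observe (correctly) that (\ref{Nov9b}) is stated for $T_2\varphi$ while $\rho_0$ in Corollary~\ref{Kor2} involves $T_0\varphi$, and since $\|T_0\varphi\|\ge\|T_2\varphi\|$ the former is not a priori controlled by the latter; you then bypass (\ref{Nov9b}) altogether by producing the explicit competitor $(1-\eta_\varepsilon)\varphi\in H_1\cap H_2$ and using (\ref{Jan8a}) to absorb the $\varepsilon^{-2}$-weighted term. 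This is a clean, self-contained argument. It is worth noting, though, that the paper's derivation of (\ref{Nov9b}) via $TT_\varepsilon=T_\varepsilon T=T$ and Lemma~\ref{Lem10a}(ii) goes through verbatim with $T=T_0$ in place of $T=T_2$, since $T_0\varphi$ is orthogonal to $\W^{1,2}(\Omega_1\setminus B_{q_*\varepsilon}(x_0))\subset H_1\cap H_2$, so $T_0\varphi$ lies in ${\cal Z}_{q_*\varepsilon}(x_0)$ and the projector identities hold. Thus the paper's estimate, read as intended (and with the right-hand side of (\ref{Nov9b}) taken as $\|\nabla\varphi\|_{L^2(B_{q\varepsilon}(x_0)\cap\Omega_\varepsilon^+)}$, which is what Lemma~\ref{Lem10a}(ii) actually delivers), already covers $T_0$; your cutoff argument reaches the same bound more directly and without relying on the orthogonal-projection machinery. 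The two routes are equivalent in content, and yours makes explicit a point the paper leaves implicit.
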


\subsection{Global perturbation of the boundary}\label{SectGla}

 Here we consider perturbations of $\Omega_1$ located near
the boundary. Let $\varepsilon$ be a small positive number. We
introduce the sets
$$
\Omega_\varepsilon=\{ x\in\Omega_1\,:\, \dist (x,\partial\Omega_1)
>\varepsilon\}\;\;\mbox{and}\;\;
\Omega_\varepsilon^+=\{y\in\Bbb R^n\,:\, \dist
(y,\Omega_1)<\varepsilon).
$$

\noindent We assume that
\begin{equation}\label{Jan17a}
\Omega_\varepsilon\subset\Omega_2\subset\Omega_\varepsilon^+.
\end{equation}
and that for all $x_0\in\partial\Omega_1$ and
$u\in\W^{1,2}(\Omega_\varepsilon^+)$ the inequality
\begin{equation}\label{Jan11a}
\int_{B_{q\varepsilon}(x_0)\cap \Omega_\varepsilon^+}|u|^2dx\leq
C\varepsilon^2\int_{B_{q\varepsilon}(x_0)\cap
\Omega_\varepsilon^+}|\nabla u|^2dx
\end{equation}
holds with a certain $q>1$ independent of $\varepsilon$.

 Let ${\cal Z}_\delta$ be subspace of
function in $\W^{1,2}(\Omega_\varepsilon^+)$ subject to
\begin{equation}\label{Jan1bz}
\sum_{i,j=1}^n\int A_{ij}(x)\partial_{x_j} u\partial_{x_i}
wdx=0\;\;\;\mbox{for all $w\in
\W^{1,2}(\Omega_\varepsilon^+\setminus \Omega_\delta)$.}
\end{equation}

\begin{lem}\label{Lem12a} {\rm (i)} Let $q_1\in (1,q)$ and
$u\in\W^{1,2}(\Omega_\varepsilon^+)$. Then
\begin{equation}\label{Jan8bb}
\int_{\Omega_\varepsilon^+\setminus\Omega_{q_1\varepsilon}}
|\nabla u|^2dx\leq C_1\varepsilon^2\int_{
\Omega_\varepsilon^+\setminus \Omega_{q\varepsilon}} |\nabla
u|^2dx.
\end{equation}

{\rm (ii)} Let $u\in {\cal Z}_{q_0\varepsilon}$ with $q_0\in
(1,q)$. Then
\begin{equation}\label{Jan8bz}
\int_{\Omega_\varepsilon^+} |\nabla u|^2dx\leq C_1\int_{
\Omega_\varepsilon^+\setminus \Omega_{q\varepsilon}} |\nabla
u|^2dx.
\end{equation}

{\rm (iii)} Let $q_*\in [1,q)$ and let $T_{q_*\varepsilon}$ be
orthogonal projector from $\W^{1,2}(\Omega_\varepsilon^+)$ onto
${\cal Z}_{q_*\varepsilon}$. Then
\begin{equation}\label{Jan8bkz}
\int_{\Omega_\varepsilon^+} |\nabla (T_{q_*\varepsilon}u)|^2dx\leq
C_1\int_{ \Omega_\varepsilon^+\setminus \Omega_{q\varepsilon}}
|\nabla u|^2dx,
\end{equation}
The constant $C_1$ in {\rm (i)} --{\rm (ii)} may depend on $n$,
$\nu$, $q$, $q_0$, $q_1$, $q_*$ and $\Omega_1$.
\end{lem}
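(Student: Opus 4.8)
All three parts concern functions living on the "fattened" domain $\Omega_\varepsilon^+$, and the geometry is entirely concentrated in the shell $\Omega_\varepsilon^+\setminus\Omega_{q\varepsilon}$, a boundary strip of width comparable to $\varepsilon$. The guiding idea, exactly as in the proof of Lemma \ref{Lem10a}, is to introduce a smooth cutoff $\eta=\eta(t)$ that equals $1$ near the boundary strip and vanishes deep inside $\Omega_1$, scale it to $\eta_\varepsilon(x)=\eta(\dist(x,\partial\Omega_1)/\varepsilon)$ (adjusting the two transition radii to separate $q_1$, $q_0$, $q_*$ from $q$), and then either test the weak identity (\ref{Jan1bz}) against a suitable auxiliary function or estimate $\nabla(\eta_\varepsilon u)$ directly. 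The one genuinely new ingredient compared with Lemma \ref{Lem10a} is that here the cutoff region is a neighborhood of the \emph{entire} boundary $\partial\Omega_1$, so the Poincaré-type inequality (\ref{Jan11a}) has to be summed over a finite covering of $\partial\Omega_1$ by balls $B_{q\varepsilon}(x_0)$ with bounded overlap; this is where the dependence of $C_1$ on $\Omega_1$ enters.

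For part (i) I would not even use (\ref{Jan1bz}): given $u\in\W^{1,2}(\Omega_\varepsilon^+)$, cover $\Omega_\varepsilon^+\setminus\Omega_{q_1\varepsilon}$ by finitely many balls $B_{q\varepsilon}(x_j)$ centered at points of $\partial\Omega_1$ with uniformly bounded multiplicity, apply (\ref{Jan11a}) with $u$ replaced by $u-\bar u_j$ where $\bar u_j$ is an average (or apply it directly if $u$ vanishes on part of the ball, as it will when we later feed in $T_{q_*\varepsilon}u-\eta_\varepsilon u$), and sum; the right-hand side is then bounded by $C_1\varepsilon^2$ times $\int_{\Omega_\varepsilon^+\setminus\Omega_{q\varepsilon}}|\nabla u|^2\,dx$ because each of the enlarged balls meets only the shell $\Omega_\varepsilon^+\setminus\Omega_{q\varepsilon}$ (after choosing the radii correctly, so $q_1<q$ leaves room). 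For parts (ii) and (iii), I would mimic Lemma \ref{Lem10a}(i) and (ii) almost verbatim: set $u_\varepsilon=u-\eta_\varepsilon u$ (resp. $U_\varepsilon=T_{q_*\varepsilon}u-\eta_\varepsilon u$), check that $u_\varepsilon\in\W^{1,2}(\Omega_\varepsilon^+\setminus\Omega_{q_0\varepsilon})$ (resp. $U_\varepsilon\in\W^{1,2}(\Omega_\varepsilon^+\setminus\Omega_{q_*\varepsilon})$ and $U_\varepsilon+\eta_\varepsilon u\in{\cal Z}_{q_*\varepsilon}$), use (\ref{Jan1bz}) to get
\[
\int A_{ij}\partial_{x_j}u_\varepsilon\,\partial_{x_i}u_\varepsilon\,dx
=-\int A_{ij}\partial_{x_j}(\eta_\varepsilon u)\,\partial_{x_i}u_\varepsilon\,dx,
\]
and then from (\ref{Jan3a}) and Hölder's inequality conclude $\int|\nabla u_\varepsilon|^2\,dx\le\nu^{-4}\int|\nabla(\eta_\varepsilon u)|^2\,dx$. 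The gradient of $\eta_\varepsilon u$ splits into $\eta_\varepsilon\nabla u$, controlled by the shell integral directly, and $u\nabla\eta_\varepsilon$, whose $L^2$-norm is $\le C\varepsilon^{-1}\|u\|_{L^2(\text{shell})}$ and is therefore absorbed by (\ref{Jan11a}) summed over the covering. Adding the matching estimate for $\eta_\varepsilon u$ itself (again (\ref{Jan11a})) yields (\ref{Jan8bz}) and (\ref{Jan8bkz}).

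\textbf{Main obstacle.} The routine part is the cutoff/energy manipulation, which is essentially copied from Lemma \ref{Lem10a}. The point requiring care is the passage from the \emph{local} Poincaré inequality (\ref{Jan11a}), stated ball-by-ball, to a \emph{global} estimate over the whole boundary strip: one must fix a finite covering of $\partial\Omega_1$ by balls $B_{q\varepsilon}(x_j)$ with $x_j\in\partial\Omega_1$ and uniformly bounded overlap number $N=N(\Omega_1,n,q)$ — this is possible for $\varepsilon$ small because $\partial\Omega_1$ is compact — and check that, with the transition radii of $\eta$ chosen so that $1<q_*\le q_1<q_0<q$ (or whatever ordering is needed in each part), the supports of the cutoff and of $\nabla\eta_\varepsilon$ sit inside $\Omega_\varepsilon^+\setminus\Omega_{q\varepsilon}$, so that no energy from the interior of $\Omega_1$ leaks into the estimates. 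Once the covering and the radii bookkeeping are set up, the constants $C_1$ depend only on $n$, $\nu$, $q$, the fixed radii $q_0,q_1,q_*$, and on $\Omega_1$ through $N$ and the constant in (\ref{Jan11a}), as claimed.
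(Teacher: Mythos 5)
Your proposal follows essentially the same route as the paper: for (i) a bounded-overlap covering of the boundary strip by balls $B_{q\varepsilon}(x_k)$ together with the local inequality (\ref{Jan11a}), and for (ii)--(iii) a cutoff $\zeta_\varepsilon$ localizing to the strip, testing the defining orthogonality relation with $u_\varepsilon$ (resp.\ $U_\varepsilon$), Cauchy--Schwarz with the ellipticity bound (\ref{Jan3a}), and the strip Poincar\'e estimate from (i) to absorb the $u\nabla\zeta_\varepsilon$ term. One small remark: in (i) no mean subtraction is needed, since (\ref{Jan11a}) is assumed for all $u\in\W^{1,2}(\Omega_\varepsilon^+)$, which already vanish on a portion of each $B_{q\varepsilon}(x_k)$ because $q>1$ -- so the ``apply it directly'' branch of your hedge is the one that is used.
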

\begin{proof} (i) Let us construct a set of points on
$\partial\Omega_1$ satisfying the properties a) and b) below. We
put $\alpha=\sqrt{q^2-q_1^2}$ and choose points $x_1,\ldots,x_N$
in the following way. We take an arbitrary point $x_1\in
\partial\Omega_1$. Let the points $x_1,\ldots,x_m$ have been
chosen. If there is a point on the boundary, say $x_*$, such that
$|x-x_*|>\alpha\varepsilon$ then we put $x_{m+1}=x_*$. If there
are no such point the the required set is constructed and we take
$N=m$. The above procedure leads to a finite set of points with
the following properties:

\smallskip
\noindent a). $\Omega_\varepsilon^+\setminus\Omega_{q_1\varepsilon
}\subset \bigcup_{k=1}^NB_{q\varepsilon}(x_k)$,

\smallskip
\noindent b). There is a integer $M$, depending only on $n$ and
$q_1$, $q$, such that every $x\in\Bbb R^n$ may belong at most to
$M$ balls $B_{q\varepsilon}(x_k)$, $k=1,\ldots,N$.

\smallskip
Using a), (\ref{Jan8a}) and then b), we get
\begin{eqnarray*}
&&\int_{\Omega_\varepsilon^+\setminus\Omega_{q_1\varepsilon
}}|u|^2dx\leq \int_{\bigcup_{k=1}^NB_{q\varepsilon}(x_k)}|u|^2dx\\
&&\leq
C\varepsilon^2\int_{\bigcup_{k=1}^NB_{q\varepsilon}(x_k)}|\nabla
u|^2dx\leq CM
\int_{\Omega_\varepsilon^+\setminus\Omega_{q\varepsilon }}|\nabla
u|^2dx,
\end{eqnarray*}
which leads to (\ref{Jan8bb}).

(ii). Let $\eta=\eta(t)$ be a smooth function which is equal to
$1$ for $t<q_0$ and $0$ for $t>q_1=(q_0+q)/2$. It is clear that
$1<q_0<q_1<q$. Let also $\zeta_\varepsilon (x)=\eta
(d(x)/\varepsilon)$, where $d(x)=\max_k |x-x_k|$. We represent $u$
as $u=\zeta_\varepsilon u+u_\varepsilon$. Then $u_\varepsilon$
belongs to $\W^{1,2}(\Omega_\varepsilon^+\setminus
\Omega_{q_1\varepsilon})$ and satisfies
$$
\sum_{i,j=1}^n\int A_{ij}(x)\partial_{x_j}
(u_\varepsilon+\zeta_\varepsilon u)\partial_{x_i}
wdx=0\;\;\;\mbox{for all $w\in
\W^{1,2}(\Omega_\varepsilon^+\setminus \Omega_{q_0\varepsilon})$.}
$$
Taking here $w=u_\varepsilon$, moving term with $u$ to the
right-hand side, using then H\"older inequality and (\ref{Jan3a}),
we obtain
$$
\int |u_\varepsilon|^2dx\leq \nu^{-4}\int |\nabla
(\zeta_\varepsilon u)|^2dx
$$
which implies, due to (\ref{Jan8bb}),
\begin{equation}\label{Jan8bs}
\int_{\Omega_\varepsilon^+} |\nabla u_\varepsilon|^2dx\leq
C_1\int_{ \Omega_\varepsilon^+\setminus \Omega_{q\varepsilon}}
|\nabla u|^2dx.
\end{equation}
 The estimate of $\zeta_\varepsilon u$ by the left-hand side of
 (\ref{Jan8bs}) follows from (\ref{Jan8bb}), which together with
 (\ref{Jan8bs}) gives (\ref{Jan8bz}).

 (iii) Let $\eta=\eta(t)$ be a smooth function which is equal to
$1$ for $t<q_0=(q_*+q)/2$ and $0$ for $t>q_1=(q_0+q)/2$. One can
check that $1<q_0<q_1<q$. Let also  $\zeta_\varepsilon (x)=\eta
(d(x)/\varepsilon)$. We represent $T_{q_*\varepsilon} u$ as
$T_{q_*\varepsilon} u=\eta_\varepsilon u+u_\varepsilon$.  Since
$T_{q_*\varepsilon} u(x)=u(x)$ for
$x\in\Omega_\varepsilon^+\setminus\Omega_{q_*\varepsilon}$ and
$$
(T_{q_*\varepsilon} u,w)=0\;\;\;\mbox{for all
$w\in\W^{1,2}(\Omega_{q_*\varepsilon})$ },
$$
we have that $u_\varepsilon\in \W^{1,2}(\Omega_{q_*\varepsilon})$
and
$$
(u_\varepsilon,w)=-(\zeta_{\varepsilon}u,w)\;\;\;\mbox{for all
$w\in \W^{1,2}(\Omega_\varepsilon)$}.
$$
We choose here $w=u_\varepsilon$ and obtain
$$
|| u_\varepsilon||^2=-(\zeta_{\varepsilon}u, u_\varepsilon).
$$
This implies
$$
||u_\varepsilon||^2_1\leq ||\zeta_{\varepsilon}u||^2_1,
$$
which leads to
\begin{equation*}
\int_{\Omega_\varepsilon^+} |\nabla u_\varepsilon|^2dx\leq
C_1\int_{ \Omega_\varepsilon^+\setminus \Omega_{q\varepsilon}}
|\nabla u|^2dx,
\end{equation*}
where we used (\ref{Jan11a}). Similar estimate of
$\eta_\varepsilon u$ by the right-hand side of the last inequality
follows from (\ref{Jan11a}). These two estimates give
(\ref{Jan8bkz}).
\end{proof}

\begin{prop} There exists a function $\sigma=\sigma(varesilon)$
such that $\sigma(\varepsilon)\to 0$ as $\varepsilon\to 0$ and for
all $u\in \W^{1,2}(\Omega_\varepsilon^+)$ satisfying {\rm
(\ref{Jan1bz})}
\begin{equation}\label{Jan4gy}
\int_{\Omega_\varepsilon^+}|u|^2dx\leq \sigma
\int_{\Omega_\varepsilon^+}|\nabla u|^2.
\end{equation}
\end{prop}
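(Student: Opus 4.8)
The plan is to repeat, essentially verbatim, the argument of Proposition~\ref{PropJan13a}, with Lemma~\ref{Lem12a} in place of Lemma~\ref{Lem10a} and with the single cut-off concentrated at one boundary point replaced by a cut-off supported in a one–sided $\varepsilon$-neighbourhood of the whole of $\partial\Omega_1$.

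First I would fix the eigenpairs $(\lambda_k,\varphi_k)$ of problem~(\ref{TTN1}), normalised by $\|\varphi_k\|=1$; as in Proposition~\ref{PropJan13a}, $\{\varphi_k\}$ is at the same time an orthogonal basis of $\W^{1,2}(\Omega_1)$ for the form $(\cdot,\cdot)$ and an orthogonal basis of $L^2(\Omega_1)$ with $\int_{\Omega_1}|\varphi_k|^2\,dx=\lambda_k^{-1}$. Then I would pick $1<q_*<q$ and a smooth cut-off $\zeta_\varepsilon$, equal to $1$ on the perturbation layer $\Omega_\varepsilon^+\setminus\Omega_\varepsilon$, equal to $0$ on $\Omega_{q_*\varepsilon}$, with $|\nabla\zeta_\varepsilon|\le C\varepsilon^{-1}$ (it can be built from $\dist(x,\partial\Omega_1)$, or as in the proof of Lemma~\ref{Lem12a}). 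Writing $u=\zeta_\varepsilon u+u_\varepsilon$ with $u_\varepsilon=(1-\zeta_\varepsilon)u$, one has $u_\varepsilon$ supported in $\overline{\Omega_\varepsilon}$, hence $u_\varepsilon\in\W^{1,2}(\Omega_1)$; moreover $u_\varepsilon$ again satisfies~(\ref{Jan1bz}) (with $\varepsilon$ replaced by $q_*\varepsilon$), i.e.\ $u_\varepsilon\in{\cal Z}_{q_*\varepsilon}$ and therefore $T_{q_*\varepsilon}u_\varepsilon=u_\varepsilon$, because $(u_\varepsilon,w)=(u,w)-(\zeta_\varepsilon u,w)$, where the first term vanishes by the hypothesis on $u$ and the second vanishes since $\nabla(\zeta_\varepsilon u)$ is supported off $\Omega_{q_*\varepsilon}$ while the test functions entering the definition of ${\cal Z}_{q_*\varepsilon}$ live on $\Omega_{q_*\varepsilon}$.

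Next, exactly as in Proposition~\ref{PropJan13a}, I would expand $u_\varepsilon=\sum_k(u_\varepsilon,\varphi_k)\varphi_k$ in $\W^{1,2}(\Omega_1)$, replace $(u_\varepsilon,\varphi_k)$ by $(u_\varepsilon,T_{q_*\varepsilon}\varphi_k)$ for $k\le N$ (legitimate since $u_\varepsilon\in{\cal Z}_{q_*\varepsilon}$ and $T_{q_*\varepsilon}$ is self-adjoint for $(\cdot,\cdot)$), and use $\int_{\Omega_1}|\varphi_k|^2\,dx=\lambda_k^{-1}$ to obtain
\[
\int_{\Omega_\varepsilon^+}|u_\varepsilon|^2\,dx\le\Big(\frac{1}{\lambda_{N+1}}+\sum_{k=1}^{N}\frac{1}{\lambda_k}\,\|T_{q_*\varepsilon}\varphi_k\|^2\Big)\int_{\Omega_\varepsilon^+}|\nabla u_\varepsilon|^2\,dx .
\]
Lemma~\ref{Lem12a}(iii) gives $\|T_{q_*\varepsilon}\varphi_k\|^2\le C\int_{\Omega_\varepsilon^+\setminus\Omega_{q\varepsilon}}|\nabla\varphi_k|^2\,dx$, and for each fixed $k$ this tends to $0$ as $\varepsilon\to0$ because the boundary layer shrinks and $\nabla\varphi_k\in L^2(\Omega_1)$. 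Taking the infimum over $N$ therefore produces a coefficient $\sigma_1(\varepsilon)$ with $\int_{\Omega_\varepsilon^+}|u_\varepsilon|^2\,dx\le\sigma_1(\varepsilon)\int_{\Omega_\varepsilon^+}|\nabla u_\varepsilon|^2\,dx$ and $\sigma_1(\varepsilon)\to0$ (since $\limsup_{\varepsilon\to0}\sigma_1(\varepsilon)\le\lambda_{N+1}^{-1}$ for every $N$).

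The two remaining estimates both rest on a bounded-overlap covering of $\partial\Omega_1$ by balls $B_{q\varepsilon}(x_k)$ together with~(\ref{Jan11a}) — precisely the device of Lemma~\ref{Lem12a}(i): summing~(\ref{Jan11a}) over the covering yields, for $v\in\W^{1,2}(\Omega_\varepsilon^+)$, $\int_{\Omega_\varepsilon^+\setminus\Omega_{q\varepsilon}}|v|^2\,dx\le C\varepsilon^2\int_{\Omega_\varepsilon^+\setminus\Omega_{q\varepsilon}}|\nabla v|^2\,dx$. Applied with $v=u$ this controls $\int|u|^2|\nabla\zeta_\varepsilon|^2$, so that $\int_{\Omega_\varepsilon^+}|\nabla(\zeta_\varepsilon u)|^2\,dx\le C\int_{\Omega_\varepsilon^+}|\nabla u|^2\,dx$ and hence $\int_{\Omega_\varepsilon^+}|\nabla u_\varepsilon|^2\,dx\le C\int_{\Omega_\varepsilon^+}|\nabla u|^2\,dx$; applied with $v=\zeta_\varepsilon u$, which is supported in the thin layer $\Omega_\varepsilon^+\setminus\Omega_{q_*\varepsilon}$, it gives $\int_{\Omega_\varepsilon^+}|\zeta_\varepsilon u|^2\,dx\le C\varepsilon^2\int_{\Omega_\varepsilon^+}|\nabla u|^2\,dx$. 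Inserting these into $u=\zeta_\varepsilon u+u_\varepsilon$ produces~(\ref{Jan4gy}) with $\sigma(\varepsilon)=C\bigl(\varepsilon^2+\sigma_1(\varepsilon)\bigr)\to0$. The step I expect to need the most care is the inclusion $u_\varepsilon\in{\cal Z}_{q_*\varepsilon}$: the cut-off must be identically $1$ on the entire perturbation layer $\Omega_\varepsilon^+\setminus\Omega_\varepsilon$ (so that $(u,w)=0$ is furnished by the orthogonality~(\ref{Jan1bz})) while its transition region, hence the support of $\nabla(\zeta_\varepsilon u)$, stays strictly inside $\Omega_\varepsilon\setminus\Omega_{q_*\varepsilon}$ and disjoint from $\Omega_{q_*\varepsilon}$ (so that $(\zeta_\varepsilon u,w)=0$), and one must keep $1<q_*<q$ consistent with the hypotheses of Lemma~\ref{Lem12a}; the globalisation of the local Poincaré inequality~(\ref{Jan11a}) via the covering of Lemma~\ref{Lem12a}(i) is the other ingredient that is new relative to the local case.
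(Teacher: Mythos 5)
Your proof is correct and follows essentially the same path as the paper's: decompose $u=\zeta_\varepsilon u+u_\varepsilon$ with a cut-off that is $1$ on the perturbation layer and vanishes on $\Omega_{q_*\varepsilon}$, expand $u_\varepsilon\in\W^{1,2}(\Omega_1)$ in the eigenbasis with the coefficients replaced by $(u_\varepsilon,T_{q_*\varepsilon}\varphi_k)$ for $k\le N$, invoke Lemma~\ref{Lem12a}(iii) (the paper cites (ii) but clearly means (iii), the projector estimate) to make the head small, and control the boundary term and the cut-off error via the globalized Poincar\'e inequality from Lemma~\ref{Lem12a}(i). Your remark that $(u,w)=0$ for $w\in\W^{1,2}(\Omega_{q_*\varepsilon})$ implicitly uses the corrected reading of (\ref{Jan1bz}) — the test space there should be $\W^{1,2}(\Omega_\delta)$, not $\W^{1,2}(\Omega_\varepsilon^+\setminus\Omega_\delta)$, as the proof of Lemma~\ref{Lem12a}(iii) confirms — and with that reading your argument matches the paper's.
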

\begin{proof}
 Let $\lambda_k$, $k=1,\ldots,$ be eigenvalues of the problem
(\ref{TTN1}) and let $\varphi_k$ be corresponding eigenfunctions
normalized by $||\varphi||_1=1$. We introduce a smooth
$\eta=\eta(t)$  which is equal to $1$ for $t<q_0=(1+q)/2$ and $0$
for $t>q_1=(q_0+q)/2$. One can check that that $1<q_0<q_1<q$. Let
also $\zeta_\varepsilon (x)=\eta (d(x)/\varepsilon)$. We represent
$u$ as $u=\eta_\varepsilon u+u_\varepsilon$. Then
$u_\varepsilon(x)\in\W^{1,2}(\Omega_{q_0\varepsilon})$ and
$T_{q_1\varepsilon} u_\varepsilon=u_\varepsilon$. Therefore, we
may represent $u_\varepsilon$ as
$$
u_\varepsilon=\sum_{k=N+1}^\infty(u_\varepsilon,\varphi_k)\varphi_k
+\sum_{k=1}^N(u_\varepsilon,T_{q_1\varepsilon}\varphi_k)\varphi_k.
$$
 Since $\{\varphi_k\}_{k\geq 1}$ is an orthogonal basis in
$L^2(\Omega_1)$ also and
$||\varphi_k||_{L^2(\Omega_1)}^2=\lambda_k^{-1}$, we have
\begin{eqnarray*}
&&||u_\varepsilon||_{L^2(\Omega_1)}^2\leq
\frac{1}{\lambda_{N+1}}||u_\varepsilon||_{W^{1,2}(\Omega_1)}^2
+\sum_{k=1}^N\frac{1}{\lambda_k}|(u_\varepsilon,T_{q_1\varepsilon}\varphi_k)|^2\\
&&\leq\Big
(\frac{1}{\lambda_{N+1}}+\sum_{k=1}^N\frac{1}{\lambda_k}||T_{q_1\varepsilon}
\varphi_k||_{W^{1,2}(\Omega_1)}^2\Big
)||u_\varepsilon||_{W^{1,2}(\Omega_1)}^2.
\end{eqnarray*}
Using Lemma \ref{Lem12a}(ii) we get
$$
||u_\varepsilon||_{L^2(\Omega_1)}^2\leq
(\frac{1}{\lambda_{N+1}}+C\sum_{k=1}^N\frac{1}{\lambda_k}
\int_{\Omega_1\setminus\Omega_{q\varepsilon}}|\nabla\varphi_k|^2dx\Big
)||u_\varepsilon||_{W^{1,2}(\Omega_1)}^2
$$
Let
$$
\sigma_1=2\inf \Big
(\frac{1}{\lambda_{N+1}}+C\sum_{k=1}^N\frac{1}{\lambda_k}
\int_{\Omega_1\setminus\Omega_{q\varepsilon}}|\nabla\varphi_k|^2dx\Big
).
$$
then $\sigma_1(\varepsilon)\to 0$ as $\varepsilon\to 0$ and by the
last inequality for $u_\varepsilon$, we obtain
$$
||u_\varepsilon||_{L^2(\Omega_1)}^2\leq\sigma_1||\nabla
u_\varepsilon||_{W^{1,2}(\Omega_\varepsilon^+)}.
$$
Since $u_\varepsilon=(1-\eta_\varepsilon)u$, we get
$$
||u_\varepsilon||_{L^2(\Omega_\varepsilon^+)}^2\leq C||\nabla
u||_{W^{1,2}(\Omega_\varepsilon^+)}.
$$
Therefore
$$
||u_\varepsilon||_{L^2(\Omega_1)}^2\leq C\sigma_1||\nabla
u_\varepsilon||_{W^{1,2}(\Omega_\varepsilon^+)}.
$$
From Lemma \ref{Lem12a}(i) it follows that
$$
||\eta_\varepsilon u||_{L^2(\Omega_\varepsilon^+)}^2\leq
C_1\varepsilon^2
\int_{\Omega_\varepsilon\setminus\Omega_{q\varepsilon}^+}|\nabla
u|^2dx.
$$
Now setting $\sigma=C\sigma_1+C_1\varepsilon^2$ and using the
representation $u=\eta_\varepsilon u+u_\varepsilon$ we arrive at
(\ref{Jan4gy}).
\end{proof}

{\em Estimates for $T\varphi$.} Since
$TT_\varepsilon=T_\varepsilon T=T$, we conclude that $T$ and
$T_\varepsilon$ satisfy (\ref{Jan30a}). Now, Lemma \ref{Lem12a}
and (\ref{Jan30a}) lead to the estimate
\begin{equation}\label{Nov4b}
||T\varphi||_{H^1(\Omega_2)}\leq
C||\nabla\varphi||_{L^2(\Omega_1\setminus\Omega_{q\varepsilon}}.
\end{equation}

{\em Estimate for $\Psi_\varphi$, $\varphi\in X_m$.} Let $d=d(x)$
and $\eta_\varepsilon(x)=\eta(d(x)/\varepsilon)$ be the same as
before.  We are looking for the solution $\Psi=\Psi_\varphi$ to
equation (\ref{TTN23feb}) in the form
\begin{equation}\label{Nov8a}
\Psi=\eta_\varepsilon(x)\varphi(x)+v(x).
\end{equation}
Then $v\in\W^{1,2}(\Omega_2)$ and satisfies
\begin{equation*}
(v,w)=-\lambda\langle \eta_\varepsilon\varphi,w\rangle
+\int_{\Omega_2}(A_{ij}\partial_{x_j}\varphi\,w\partial_{x_i}\eta_\varepsilon)dx
-\int_{\Omega_2}\varphi
A_{ij}\partial_{x_j}\eta_\varepsilon\,\partial_{x_i}w)dx.
\end{equation*}
Choosing $w=v$ and using H\"older's inequality together with Lemma
\ref{Lem12a}(i), we obtain
$$
||\nabla v||_{L^2(\Omega_2)}\leq C
||\nabla\varphi||_{L^2(\Omega_1\setminus\Omega_{q\varepsilon}}
$$
Taking into account (\ref{Nov8a}), we get
\begin{equation}\label{Nov8aa}
||\Psi||_{L^2(\Omega_2)}\leq C
||\nabla\varphi||_{L^2(\Omega_1\setminus\Omega_{q\varepsilon}}.
\end{equation}

Using (\ref{Nov4b}), (\ref{Nov8aa}) and Corollary \ref{Kor2}, we
arrive at
\begin{kor} Under the assumptions on $\Omega_1$ and $\Omega_2$ of this
section, the following estimate for the eigenvalues of the
problems (\ref{TTN1}) and (\ref{TTN2}) holds
$$
|\lambda_m^{-1}-\mu_k^{-1}|\leq C\max_{\varphi\in X_m,
||\varphi||=1}||\nabla\varphi||_{L^2(\Omega_1\setminus
\Omega_{q\varepsilon})}^2,
$$
where $\mu_1,\ldots,\mu_{J_m}$ are eigenvalues of {\rm
(\ref{TTN2})} located near $\lambda_m^{-1}$, see {\rm Proposition
\ref{Pr1}}.
\end{kor}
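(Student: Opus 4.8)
The plan is to read off the result from Corollary~\ref{Kor2}, whose hypothesis is merely that $\sigma=\sigma(H_1,H_2)$ be small; this holds for small $\varepsilon$ by the Proposition of the present subsection together with $\sigma\le 4\sigma_*$ (see \eqref{Feb10a}), and it also puts us in the range of Proposition~\ref{Pr1} so that the $\mu_k$ near $\lambda_m$ are well defined. Corollary~\ref{Kor2} gives $|\mu_k^{-1}-\lambda_m^{-1}|\le C\rho_0$ with $\rho_0=\max_{\varphi\in X_m,\,\|\varphi\|=1}\big(\|T_0\varphi\|^2+\|\Psi_\varphi\|^2\big)$, $T_0=I-S_0$ and $S_0$ the orthogonal projector onto $H_1\cap H_2=\W^{1,2}(\Omega_1\cap\Omega_2)$. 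So it suffices to show that, for every $\varphi\in X_m$, both $\|T_0\varphi\|^2$ and $\|\Psi_\varphi\|^2$ are bounded by $C\|\nabla\varphi\|_{L^2(\Omega_1\setminus\Omega_{q\varepsilon})}^2$, and then to take the maximum over $\|\varphi\|=1$, which is attained since $X_m$ is finite dimensional.

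First I would treat $\|T_0\varphi\|$. With $\eta_\varepsilon(x)=\eta(d(x)/\varepsilon)$ the cut-off used above ($\eta_\varepsilon=1$ where $\dist(x,\partial\Omega_1)<q_0\varepsilon$, $\eta_\varepsilon=0$ where $\dist(x,\partial\Omega_1)>q_1\varepsilon$, $1<q_0<q_1<q$), the function $(1-\eta_\varepsilon)\varphi$ vanishes on a full $q_0\varepsilon$-collar of $\partial\Omega_1$, hence lies in $\W^{1,2}(\Omega_{q_0\varepsilon})\subset\W^{1,2}(\Omega_\varepsilon)$, and since $\Omega_\varepsilon\subset\Omega_2$ (assumption \eqref{Jan17a}) it belongs to $H_1\cap H_2$. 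Therefore $(1-\eta_\varepsilon)\varphi$ is an admissible competitor for $S_0\varphi$, so $\|T_0\varphi\|=\|\varphi-S_0\varphi\|\le\|\eta_\varepsilon\varphi\|$. The right-hand side is estimated exactly as in the derivation of \eqref{Nov4b}: by ellipticity \eqref{Jan3a}, $\|\eta_\varepsilon\varphi\|^2\le C\|\nabla(\eta_\varepsilon\varphi)\|_{L^2}^2$; the term $\eta_\varepsilon\nabla\varphi$ is supported in $\Omega_1\setminus\Omega_{q_1\varepsilon}\subset\Omega_1\setminus\Omega_{q\varepsilon}$, and for the term $\varphi\,\nabla\eta_\varepsilon$ one uses $|\nabla\eta_\varepsilon|\le C\varepsilon^{-1}$ together with the Poincar\'e-type inequality of Lemma~\ref{Lem12a}(i) applied on the shell $\Omega_1\setminus\Omega_{q_1\varepsilon}$. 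This yields $\|T_0\varphi\|\le C\|\nabla\varphi\|_{L^2(\Omega_1\setminus\Omega_{q\varepsilon})}$.

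Next I would bound $\|\Psi_\varphi\|$. Writing $\Psi_\varphi=\eta_\varepsilon\varphi+v$ as in \eqref{Nov8a}, the computation preceding \eqref{Nov8aa} already gives $\|\nabla v\|_{L^2(\Omega_2)}\le C\|\nabla\varphi\|_{L^2(\Omega_1\setminus\Omega_{q\varepsilon})}$, while $\|\nabla(\eta_\varepsilon\varphi)\|_{L^2}$ is controlled by the same quantity via Lemma~\ref{Lem12a}(i) as in the previous paragraph; adding the two and invoking \eqref{Jan3a} upgrades \eqref{Nov8aa} (stated there only in the $L^2(\Omega_2)$ norm) to the full norm bound $\|\Psi_\varphi\|\le C\|\nabla\varphi\|_{L^2(\Omega_1\setminus\Omega_{q\varepsilon})}$, which is what enters $\rho_0$. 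Combining the two estimates gives $\rho_0\le C\max_{\varphi\in X_m,\,\|\varphi\|=1}\|\nabla\varphi\|_{L^2(\Omega_1\setminus\Omega_{q\varepsilon})}^2$, and Corollary~\ref{Kor2} finishes the proof.

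The only point that requires a moment's care is that $\rho_0$ in Corollary~\ref{Kor2} features $T_0=I-S_0$, whereas estimate \eqref{Nov4b} of this subsection concerns $T_2\varphi=T\varphi$; this is precisely what the observation in the second paragraph — that $(1-\eta_\varepsilon)\varphi$ lies in $H_1\cap H_2$ — takes care of, by giving a direct bound on $\|T_0\varphi\|$ rather than on the a priori smaller $\|T_2\varphi\|$. Beyond that, the argument is pure bookkeeping: all the analytic content (the Poincar\'e-type inequalities of Lemma~\ref{Lem12a} and the fact $\sigma(\varepsilon)\to0$) has already been established in this subsection.
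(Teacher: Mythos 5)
Your proof is correct and follows the paper's overall strategy (invoke Corollary~\ref{Kor2} and bound $\rho_0$ by localized Dirichlet integrals via Lemma~\ref{Lem12a}), but you handle a genuine subtlety more carefully than the paper's stated argument. The paper reaches the Corollary "using (\ref{Nov4b}), (\ref{Nov8aa}) and Corollary \ref{Kor2}"; but (\ref{Nov4b}) is an estimate for $\|T\varphi\| = \|T_2\varphi\|$, whereas Corollary~\ref{Kor2} requires a bound on $\|T_0\varphi\|$ with $T_0 = I - S_0$, $S_0$ the projector onto $H_1\cap H_2$. Since $H_1\cap H_2\subset H_2$ one only has $\|T_2\varphi\|\leq\|T_0\varphi\|$, so (\ref{Nov4b}) by itself does not yield what Corollary~\ref{Kor2} needs. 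Your observation that $(1-\eta_\varepsilon)\varphi\in\W^{1,2}(\Omega_{q_0\varepsilon})\subset H_1\cap H_2$ gives a competitor for $S_0\varphi$ and hence the direct minimality bound $\|T_0\varphi\|\leq\|\eta_\varepsilon\varphi\|$; combined with the Poincar\'e inequality of Lemma~\ref{Lem12a}(i) this gives exactly the needed $\|T_0\varphi\|^2\lesssim \|\nabla\varphi\|^2_{L^2(\Omega_1\setminus\Omega_{q\varepsilon})}$. This closes the step the paper leaves implicit, and it is the same device the paper uses in the Lipschitz case (Proposition~\ref{Prop19a}), where $T_0$ is bounded directly. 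Your reading of (\ref{Nov8aa}) as really giving a bound on the energy norm of $\Psi_\varphi$ (the stated $L^2$ norm is plainly a misprint, since the derivation estimates $\|\nabla v\|_{L^2}$ and $\|\nabla(\eta_\varepsilon\varphi)\|_{L^2}$) is also correct; by the ellipticity (\ref{Jan3a}) the bilinear-form norm is equivalent to the $L^2$ norm of the gradient, so this upgrades to the $\|\Psi_\varphi\|$ entering $\rho_0$. In short: correct, same route modulo one clarified step that makes the appeal to Corollary~\ref{Kor2} rigorous.
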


\subsection{The case of Lipschitz domains}

We recall the domain $\Omega$ is called Lipschits with Lipschitz
constant less than or equal to $C_*$ if the boundary
$\partial\Omega$ can be covered by a finite number of balls $B$
such that in an appropriate orthogonal system of coordinate
$B\cap\Omega=B\cap\{y=(y',y_n):y_n>h(y')\}$, where $h$ satisfies
$|h(y')-h(z')|\leq C_*|y'-z'|$ and $h(0)=0$ and $B$ has the center
at the origin.

\medskip
We assume in this section that  $\Omega_1$ is a bounded Lipschitz
domain and we denote  by $B_k$, $k=1,\ldots,M$, the balls from the
covering of the boundary and by $h_k$ corresponding Lipschitz
functions. Then there exists a positive $\delta$ such that the set
${\cal V}_\delta=\{x\in\Bbb R^n:\dist
(x,\partial\Omega)\leq\delta\}$ contains in $\bigcup_{k=1}^MB_k$.

Concerning the domain $\Omega_2$ we assume that
$$
\Omega_\varepsilon\subset\Omega_2\subset\Omega_\varepsilon^+,
$$
where $\varepsilon$ is a sufficiently small positive number.
Moreover, we suppose that the domain $\Omega_2$ is Lipschitz and
$$
B_k\cap\Omega_2=B_k\cap\{y=(y',y_n):y_n>g_k(y')\}\;\;\mbox{for
$k=1,\ldots,M$,}
$$
where $g_k$ are Lipschitz functions with the Lipschits constant
$\leq C_*$.

\begin{prop}\label{Prop19a} The following inequality
\begin{equation}\label{Jan15a}
||T_0\varphi||^2\leq
C\int_{\Omega_1\setminus\Omega_2}|\nabla\varphi|^2dx
\end{equation}
holds for all $\varphi\in X_m$. Here $T_0=I-S_0$ and $S_0$ is the
orthogonal projector onto $\W^{1,2}(\Omega_1\cap\Omega_2)$,
compare with Corollary \ref{Kor2}.
\end{prop}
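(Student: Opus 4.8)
The plan: since $S_0$ is the $(\cdot,\cdot)$-orthogonal projector of $\W^{1,2}(\Omega_1)$ onto $\W^{1,2}(\Omega_1\cap\Omega_2)$, one has $\|T_0\varphi\|^2=\inf\{\|\varphi-\psi\|^2:\psi\in\W^{1,2}(\Omega_1\cap\Omega_2)\}$, so by the ellipticity bounds \eqref{Jan3a} it suffices to exhibit one $\psi\in\W^{1,2}(\Omega_1\cap\Omega_2)$ with $\int_{\Omega_1}|\nabla(\varphi-\psi)|^2\,dx\le C\int_{\Omega_1\setminus\Omega_2}|\nabla\varphi|^2\,dx$. I would build $\psi$ by localisation. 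Fix once and for all (depending only on $\Omega_1$) a partition of unity $\{\zeta_k\}_{k=0}^M$ on $\overline{\Omega_1}$ with $\zeta_k\in C_c^\infty(B_k)$ for $k\ge1$ and $\supp\zeta_0\subset\Omega_1\setminus\mathcal V_{\delta/2}=\Omega_{\delta/2}$; for $\varepsilon<\delta/2$ the piece $\zeta_0\varphi$ is supported in $\Omega_{\delta/2}\subset\Omega_\varepsilon\subset\Omega_1\cap\Omega_2$ and needs no change.

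In the chart $B_k$, with coordinates so that $\Omega_1=\{y_n>h_k(y')\}$ and $\Omega_2=\{y_n>g_k(y')\}$, put $\rho_k(y')=(g_k(y')-h_k(y'))_+$; the inclusions $\Omega_\varepsilon\subset\Omega_2\subset\Omega_\varepsilon^+$ force $0\le\rho_k\le C\varepsilon$, and $\rho_k$ is Lipschitz with constant $\le2C_*$. Let $\Phi_k(y',y_n)=(y',\alpha_k(y',y_n))$ be the vertical map which is the identity for $y_n\ge h_k(y')+2\rho_k(y')$ and affine in $y_n$ with slope $\tfrac12$ on $[h_k(y'),h_k(y')+2\rho_k(y')]$, so that $\alpha_k(y',h_k(y'))=\max(h_k(y'),g_k(y'))$. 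Then $\Phi_k$ is a bi-Lipschitz homeomorphism with constants depending only on $n$ and $C_*$, it is the identity wherever $g_k\le h_k$, it displaces points by at most $\rho_k\le C\varepsilon$, and it maps $\Omega_1\cap B_k$ into $\Omega_1\cap\Omega_2$. Put $\psi=\zeta_0\varphi+\sum_{k=1}^M(\zeta_k\varphi)\circ\Phi_k^{-1}\in\W^{1,2}(\Omega_1\cap\Omega_2)$, so that $\varphi-\psi=\sum_{k=1}^M\big(\zeta_k\varphi-(\zeta_k\varphi)\circ\Phi_k^{-1}\big)$, and it remains to estimate each summand.

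Fix $k$ and set $u=\zeta_k\varphi$. Since $\Phi_k$ is the identity above the graph $y_n=h_k+2\rho_k$, the difference $u-u\circ\Phi_k^{-1}$ is supported in the layer $\{h_k<y_n<h_k+2\rho_k\}=\Sigma_k^-\cup\Sigma_k^+$, with defect layer $\Sigma_k^-=\{h_k<y_n<g_k\}\subset\Omega_1\setminus\Omega_2$ and transition layer $\Sigma_k^+=\{g_k<y_n<g_k+\rho_k\}\subset\Omega_1\cap\Omega_2$. On $\Sigma_k^-$ we have $u\circ\Phi_k^{-1}=0$, so the contribution there is $\int_{\Sigma_k^-}|\nabla(\zeta_k\varphi)|^2\le C\int_{\Sigma_k^-}\big(|\nabla\varphi|^2+|\varphi|^2\big)$, and, $\varphi$ vanishing on $\{y_n=h_k\}$, the one-dimensional Poincar\'e inequality along the fibres (length $\le C\varepsilon$) gives $\int_{\Sigma_k^-}|\varphi|^2\le C\varepsilon^2\int_{\Sigma_k^-}|\nabla\varphi|^2$; thus this part is $\le C\int_{\Omega_1\setminus\Omega_2}|\nabla\varphi|^2$. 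On $\Sigma_k^+$, a change of variables with $\varepsilon$-independent Jacobians (the bi-Lipschitz bounds of $\Phi_k$ and $\sup|\nabla\zeta_k|$ do not depend on $\varepsilon$) bounds the contribution by $C\int_{\Sigma_k^-\cup\Sigma_k^+}\big(|\nabla\varphi|^2+|\varphi|^2\big)$, and writing $\varphi(y',g_k+t)=\int_{h_k}^{g_k}\partial_n\varphi+\int_{g_k}^{g_k+t}\partial_n\varphi$ bounds $\int_{\Sigma_k^+}|\varphi|^2$ by $C\varepsilon^2\big(\int_{\Sigma_k^-}+\int_{\Sigma_k^+}\big)|\nabla\varphi|^2$. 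Altogether the $k$-th summand is $\le C\int_{\Omega_1\setminus\Omega_2}|\nabla\varphi|^2+C\int_{\Sigma_k^+}|\nabla\varphi|^2$, so — summing over the $\le M$ charts and using \eqref{Jan3a} together with $\|T_0\varphi\|^2=\inf_\psi\|\varphi-\psi\|^2$ — everything reduces to proving
$$\int_{\Sigma_k^+}|\nabla\varphi|^2\,dx\le C\int_{\Omega_1\setminus\Omega_2}|\nabla\varphi|^2\,dx,\qquad k=1,\dots,M.$$

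This last estimate is the main obstacle. The transition layer $\Sigma_k^+$ lies \emph{inside} $\Omega_1\cap\Omega_2$, so it cannot be reached from the Sobolev geometry alone, and the eigenvalue equation must intervene; this is the only place where the hypothesis $\varphi\in X_m$ is used (through $(\varphi,w)=\lambda_m\langle\varphi,w\rangle$ and $\lambda_m\le\lambda_{m+1}$). For it I would cover $\Sigma_k^+$ by a Whitney-type family of cubes $Q$ of side $\sim\rho_k(y')$ whose doubles straddle $\{y_n=h_k\}$ (possible because $\Omega_1$ and $\Omega_2$ are uniformly Lipschitz with the common constant $C_*$), test the eigenvalue identity against $w=\eta^2\varphi\in\W^{1,2}(\Omega_1)$ with cutoffs $\eta$ adapted to these cubes to obtain a Caccioppoli inequality $\int_Q|\nabla\varphi|^2\le C\rho_k^{-2}\int_{2Q\cap\Omega_1}|\varphi|^2$ (the $\lambda_m$-term being absorbed since $\lambda_m\varepsilon^2\le1$ for $\varepsilon$ small), and then invoke the Dirichlet--Poincar\'e inequality on $2Q\cap\Omega_1$, using $\varphi=0$ on $\partial\Omega_1\cap 2Q$, to turn the right-hand side into energy of $\varphi$ over $\Omega_1\setminus\Omega_2$. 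The genuinely delicate point, which I expect to carry the real work, is that each such step enlarges the collar inside $\Omega_2$ by a bounded factor; one must therefore organise the covering and the ensuing iteration — exploiting that $C_*$ is independent of $\varepsilon$ and that $\lambda_m$ is fixed — so that these enlargements do not cause the constant to blow up as $\varepsilon\to0$.
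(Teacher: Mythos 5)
Your plan — bound $\|T_0\varphi\|^2=\inf_\psi\|\varphi-\psi\|^2$ by an explicit competitor built from a partition of unity and a family of bi-Lipschitz ``push-down'' maps $\Phi_k$ — is genuinely different from the paper's argument, and your bookkeeping is correct up to the point where everything is reduced to
\[
\int_{\Sigma_k^+}|\nabla\varphi|^2\,dx\leq C\int_{\Omega_1\setminus\Omega_2}|\nabla\varphi|^2\,dx.
\]
But this reduction is where the proof stops being a proof. You yourself call this inequality ``the main obstacle,'' offer only a sketch (Caccioppoli on Whitney cubes straddling $\partial\Omega_1$, Dirichlet--Poincar\'e, iteration), and explicitly flag that the iteration enlarges the interior collar at each step and that organising it ``so that these enlargements do not cause the constant to blow up'' is the genuinely delicate point. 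This is a real gap, not a routine detail left to the reader: each Caccioppoli--Poincar\'e round transfers control from $Q\subset\Omega_1\cap\Omega_2$ to $2Q\cap\Omega_1$, which strictly grows the set over which you still need to control $\nabla\varphi$, and it is not at all clear the scheme closes without new ideas (a naive hole-filling iteration would eventually engulf all of $\Omega_1$). Note also a structural warning sign: the paper's proof never uses $\varphi\in X_m$, so Proposition~\ref{Prop19a} actually holds for \emph{every} $\varphi\in\W^{1,2}(\Omega_1)$. Yet your scheme necessarily invokes the eigenvalue equation for the $\Sigma_k^+$ estimate, which indicates that the push-down competitor $\psi$ is a suboptimal choice that injects spurious energy in the transition layer and thereby converts a purely geometric statement into an apparently analytic one.

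The paper sidesteps the transition layer entirely. The two observations are: first, $T_0\varphi$ equals $\varphi$ on $\Omega_1\setminus\Omega_2$ and is $A$-harmonic in $\Omega_1\cap\Omega_2$ (because $(T_0\varphi,w)=0$ for every $w\in\W^{1,2}(\Omega_1\cap\Omega_2)$) with Dirichlet data $\varphi$ on $\partial(\Omega_1\cap\Omega_2)$; second, since $\varphi$ extended by zero lies in $W^{1,2}(\Bbb R^n\setminus(\Omega_1\cap\Omega_2))$ and is supported in $\overline{\Omega_1\setminus\Omega_2}$, its trace on $\partial(\Omega_1\cap\Omega_2)$ satisfies $\|\varphi\|_{W^{1/2,2}(\partial(\Omega_1\cap\Omega_2))}\leq C\|\varphi\|_{W^{1,2}(\Omega_1\setminus\Omega_2)}$ with a constant governed by the uniform Lipschitz bound $C_*$, hence independent of $\varepsilon$. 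The energy estimate for the Dirichlet problem gives $\|\nabla T_0\varphi\|_{L^2(\Omega_1\cap\Omega_2)}\leq c\,\|\varphi\|_{W^{1/2,2}(\partial(\Omega_1\cap\Omega_2))}$, and the lower-order term in the trace bound is absorbed by the thin-layer Poincar\'e inequality in $\Omega_1\setminus\Omega_2$ (using $\varphi=0$ on $\partial\Omega_1$). If you wish to keep your ``explicit competitor'' framework, take $\psi=S_0\varphi$ itself, i.e.\ the $A$-harmonic extension of $\varphi|_{\partial(\Omega_1\cap\Omega_2)}$; then your infimum is attained, the transition layer never appears, and you recover the paper's argument.
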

\begin{proof} Since $\varphi\in \W^{1,2}(\Bbb R^n\setminus(\Omega_1\cap\Omega_2))$
the trace of this function on $\partial(\Omega_1\cap\Omega_2)$
belongs to $W^{1/1,2}(\partial(\Omega_1\cap\Omega_2))$ and
$$
||\varphi||_{W^{1/1,2}(\partial(\Omega_1\cap\Omega_2))}\leq
C||\varphi||_{W^{1,2}(\Omega_1\setminus\Omega_2)}.
$$
Since $T_0\varphi$ is harmonic in $\Omega_1\cap\Omega_2$ with the
Dirihlet boundary condition $u=\varphi$ on $\partial\Omega_2$ we
have that
$$
||\nabla T\varphi||_{L^2(\Omega_2)}\leq
c||\varphi||_{W^{1/1,2}(\partial\Omega_2)}\leq
C_1||\nabla\varphi||_{L^2(\Omega_1\setminus\Omega_2)}.
$$
This estimate together with $T\varphi=\varphi$ outside $\Omega_2$
implies (\ref{Jan15a}).
\end{proof}

The estimate obtained in the last proposition together with
Corollary \ref{Kor1} and $T_0\varphi=\varphi$ outside
$\Omega_1\cap\Omega_2$, implies the following

\begin{kor} Let $\Omega_2\subset\Omega_1$. Then for $j=1,\ldots,J_m$,
$$
c\min ||\nabla\varphi||_{L^2(\Omega_1\setminus\Omega_2)}^2\leq
\Big |\mu_{m_j}^{-1}-\lambda_m^{-1}\Big |\leq C\max
||\nabla\varphi||_{L^2(\Omega_1\setminus\Omega_2)}^2,
$$
where minimum and maximum are taken over all $\varphi\in X_m$ with
$||\varphi||=1$.
\end{kor}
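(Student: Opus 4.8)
The plan is to obtain the corollary by inserting the estimate of Proposition \ref{Prop19a} into the two-sided bound of Corollary \ref{Kor1}, once the projectors occurring in the two statements have been identified. First I would observe that $\Omega_2\subset\Omega_1$ forces $H_2\subset H_1$ and $\Omega_1\cap\Omega_2=\Omega_2$, so that $S_0=S_2=S$ and $T_0=T_2=T$, with $T\varphi=\varphi-S\varphi$. Hence Corollary \ref{Kor1} applies: $\Psi_\varphi=0$ for all $\varphi\in X_m$, and the asymptotic formula (\ref{Sept20d}) gives
$$
c\min_{\varphi\in X_m,\,||\varphi||=1}||T\varphi||^2\le\big|\mu_{m_j}^{-1}-\lambda_m^{-1}\big|\le C\max_{\varphi\in X_m,\,||\varphi||=1}||T\varphi||^2,\qquad j=1,\ldots,J_m .
$$

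It then suffices to compare $||T\varphi||^2$ with $\int_{\Omega_1\setminus\Omega_2}|\nabla\varphi|^2dx$ from both sides, uniformly over $\varphi\in X_m$. For the upper estimate I would invoke Proposition \ref{Prop19a} directly, since $T=T_0$: it yields $||T\varphi||^2\le C\int_{\Omega_1\setminus\Omega_2}|\nabla\varphi|^2dx$, and taking the maximum over the unit sphere of $X_m$ gives the right-hand inequality of the corollary. For the lower estimate I would use that $S\varphi\in\W^{1,2}(\Omega_2)$ vanishes almost everywhere outside $\Omega_2$, so that $T\varphi=\varphi$ on $\Bbb R^n\setminus\Omega_2$; combined with the ellipticity bound (\ref{Jan3a}) this gives
$$
||T\varphi||^2=\sum_{i,j=1}^n\int_{\Bbb R^n}A_{ij}\partial_{x_j}(T\varphi)\partial_{x_i}(T\varphi)\,dx\ge\nu\int_{\Bbb R^n}|\nabla(T\varphi)|^2\,dx\ge\nu\int_{\Omega_1\setminus\Omega_2}|\nabla\varphi|^2\,dx,
$$
and taking the minimum over the unit sphere of $X_m$ gives the left-hand inequality.

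Combining the two comparisons with the displayed bound of Corollary \ref{Kor1} produces exactly the claimed estimate, with constants $c,C$ depending only on $n$, $\nu$, $\Omega_1$, the Lipschitz constant $C_*$ and $\lambda_1,\ldots,\lambda_{m+1}$. I do not expect a genuine obstacle here, since all the substance sits in Proposition \ref{Prop19a} and Corollary \ref{Kor1}; the only points that need a line of care are the identification $T_0=T$, valid because $\Omega_1\cap\Omega_2=\Omega_2$, the almost-everywhere identity $T\varphi=\varphi$ outside $\Omega_2$ used in the lower bound, and the remark that the minimum and maximum over the unit sphere of the finite-dimensional space $X_m$ are attained, so that the pointwise comparisons may be passed through $\min$ and $\max$.
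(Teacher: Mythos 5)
Your proposal is correct and follows essentially the same route as the paper, which derives the corollary from Proposition \ref{Prop19a} (upper bound on $||T_0\varphi||^2$), Corollary \ref{Kor1} (two-sided bound in terms of $||T\varphi||^2$), and the identity $T_0\varphi=\varphi$ outside $\Omega_1\cap\Omega_2$ together with ellipticity (lower bound). You have merely spelled out the identification $T_0=T$ when $\Omega_2\subset\Omega_1$ and the two comparison estimates that the paper leaves implicit.
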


Let us turn to estimating of the function $\Psi=\Psi_\varphi$. We
have the following

\begin{prop}\label{Prop19b} There exists a domain $S$ such that $\Omega_2\setminus
\Omega_{1}\subset S\subset\Omega_2$, $|S|\leq
C|(\Omega_2\setminus\Omega_1)|$ and the following inequality
\begin{equation}\label{Jan19b}
||\nabla\Psi_\varphi ||_{L^2(\Omega_2)}^2\leq C_1\max\int_S
|\nabla \varphi|^2dx
\end{equation}
holds, where $\max$ is taken over all $\varphi\in X_m$ satisfying
$||\varphi||=1$. Here by $|S|$ is denote the area of $S$  and the
constant C is independent of $\varepsilon$.

\end{prop}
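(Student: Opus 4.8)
\medskip\noindent\textbf{Plan of the proof.}
Since $(\Psi_\varphi,\Psi_\varphi)\ge\nu\,\|\nabla\Psi_\varphi\|_{L^2(\Omega_2)}^2$ by (\ref{Jan3a}), it is enough to bound the form norm $\|\Psi_\varphi\|$ by $C\big(\int_S|\nabla\varphi|^2\,dx\big)^{1/2}$ for each $\varphi\in X_m$, with the set $S$ and the constant independent of $\varphi$; (\ref{Jan19b}) then follows on taking the maximum over $\|\varphi\|=1$. Abbreviate $\Psi=\Psi_\varphi$. Testing (\ref{TTN23feb}) with $w=\Psi$ gives
$$\|\Psi\|^2=(\varphi,\Psi)-\lambda_m\langle\varphi,\Psi\rangle,$$
and, because $\varphi$ is supported in $\overline{\Omega_1}$ while $\Psi$ is supported in $\overline{\Omega_2}$, both terms on the right are integrals over $\Omega_1\cap\Omega_2$ only. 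On $\Omega_1\cap\Omega_2$ the function $\varphi$ already solves the equation (\ref{TTN1}), so the right-hand side ought to be concentrated near the interface $\Gamma:=\partial\Omega_1\cap\Omega_2$, i.e. near the part of $\partial\Omega_1$ that has slipped into the interior of $\Omega_2$; the whole proof is the quantification of this.

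\smallskip
The first step is to construct $S$. In each coordinate ball $B_k$, in which $\partial\Omega_1=\{y_n=h_k(y')\}$ and $\partial\Omega_2=\{y_n=g_k(y')\}$, the inclusions $\Omega_\varepsilon\subset\Omega_2\subset\Omega_\varepsilon^+$ force $|h_k(y')-g_k(y')|\le C\varepsilon$; set
$$S\cap B_k:=\{(y',y_n)\in B_k:\ |y_n-h_k(y')|<|h_k(y')-g_k(y')|\},\qquad S:=\bigcup_k S\cap B_k,$$
so $S$ is the ``slipped'' layer $\Omega_2\setminus\Omega_1$ together with its reflection across $\partial\Omega_1$. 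Since for small $\varepsilon$ one has $\Omega_2\setminus\Omega_1\subset\{\dist(\cdot,\partial\Omega_1)<\varepsilon\}\subset\bigcup_kB_k$, the required inclusions $\Omega_2\setminus\Omega_1\subset S\subset\Omega_2$ are immediate from this description, and slicing in $y_n$ gives $|S\cap B_k|=2\int|h_k-g_k|\,dy'=2|(\Omega_2\setminus\Omega_1)\cap B_k|$, whence $|S|\le C|\Omega_2\setminus\Omega_1|$ with $C$ depending only on the bounded multiplicity of the cover. In the same charts I build a cut-off $\theta$ which equals $1$ on $\Omega_1\cap\Omega_2\setminus S$, equals $0$ on the half of $S$ lying in $\Omega_1$ and adjacent to $\Gamma$ (the ``reflected'' slices $\{h_k<y_n<h_k+|h_k-g_k|\}$), and satisfies $|\nabla\theta|\le C|h_k(y')-g_k(y')|^{-1}$ on its support. \emph{Gluing these chart-wise cut-offs into a single admissible function on $\Omega_2$, while keeping the gradient bound tied to the possibly tiny, $y'$-dependent local width $|h_k-g_k|$ and not to the fixed scale $\varepsilon$, is the main technical obstacle of the proof.}

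\smallskip
The heart of the argument is a cancellation. Because $\theta$ vanishes on $\Gamma$, $\Psi$ vanishes on $\partial\Omega_2$, and (by the collar estimate below) $\Psi\nabla\theta\in L^2$, the function $\theta\Psi$, extended by $0$ to $\Omega_1\setminus\Omega_2$, belongs to $H_1=\W^{1,2}(\Omega_1)$. Inserting it as a test function into $(\varphi,\cdot)=\lambda_m\langle\varphi,\cdot\rangle$ and combining with the displayed formula for $\|\Psi\|^2$, the two copies of $\int_{\Omega_1\cap\Omega_2}\theta\,A_{ij}\partial_{x_j}\varphi\,\partial_{x_i}\Psi\,dx$ cancel and one obtains
$$\|\Psi\|^2=\int_S(1-\theta)A_{ij}\partial_{x_j}\varphi\,\partial_{x_i}\Psi\,dx-\int_S\Psi\,A_{ij}\partial_{x_j}\varphi\,\partial_{x_i}\theta\,dx-\lambda_m\int_S(1-\theta)\varphi\,\Psi\,dx,$$
all three integrands being supported in $S\cap\Omega_1$.

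\smallskip
Finally one estimates the three terms. The first is at most $\nu^{-1}\|\nabla\varphi\|_{L^2(S)}\|\nabla\Psi\|_{L^2(S)}\le C\|\nabla\varphi\|_{L^2(S)}\|\Psi\|$. For the second, on the slice $\{h_k<y_n<h_k+|h_k-g_k|\}$ the vanishing of $\Psi$ at $y_n=g_k(y')\in\partial\Omega_2$ and the one-dimensional Poincar\'e inequality give $\int|\Psi|^2|\nabla\theta|^2\,dy_n\le C\int|\partial_{y_n}\Psi|^2\,dy_n$ over the enlarged slice $\{g_k<y_n<h_k+|h_k-g_k|\}=S\cap B_k$; integrating in $y'$ and summing over $k$ yields $\int_S|\Psi|^2|\nabla\theta|^2\,dx\le C\int_S|\nabla\Psi|^2\,dx\le C\|\Psi\|^2$, so the second term is $\le C\|\nabla\varphi\|_{L^2(S)}\|\Psi\|$ (this is also the estimate that puts $\theta\Psi$ in $W^{1,2}$). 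For the third, the vanishing of $\varphi$ on $\partial\Omega_1=\{y_n=h_k\}$ and the same Poincar\'e inequality give $\|\varphi\|_{L^2(S)}\le C\varepsilon\|\nabla\varphi\|_{L^2(S)}$, and with $\|\Psi\|_{L^2(\Omega_2)}\le C\|\Psi\|$ (Poincar\'e on the bounded domain $\Omega_2$, cf. (\ref{Jan3a})) the third term is $\le C\varepsilon\|\nabla\varphi\|_{L^2(S)}\|\Psi\|$. Adding up, $\|\Psi\|^2\le C\|\nabla\varphi\|_{L^2(S)}\|\Psi\|$, hence $\|\Psi\|^2\le C\int_S|\nabla\varphi|^2\,dx$; taking the maximum over $\varphi\in X_m$ with $\|\varphi\|=1$ gives (\ref{Jan19b}).
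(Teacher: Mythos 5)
Your argument is, in spirit, a clean dual of the paper's. The paper seeks $\Psi=\zeta\varphi+v$ with a collar cut-off $\zeta$ adapted to the variable thickness $h_k-g_k$ and then tests the equation for $v$ with $w=v$; you instead test the $\Psi$-equation with $\Psi$ and the $\varphi$-equation with $\theta\Psi$ and let the cross terms cancel. Both routes hinge on the same geometric inputs (a cut-off whose gradient scales like $|h_k-g_k|^{-1}$ and the one-dimensional Hardy inequality in the normal direction), and your cancellation identity is correct. Where you leave a genuine gap, and say so yourself, is in the construction of $\theta$: you insist on a single global cut-off with the fine-scale gradient bound $|\nabla\theta|\le C|h_k-g_k|^{-1}$ and declare the gluing ``the main technical obstacle,'' but then use $\theta$ as if it existed. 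The paper does not construct such a $\theta$ either: it sets $\zeta=\sum_k\zeta_k\psi_k$ with $\{\psi_k\}$ a fixed $C^1$ partition of unity on the collar, so that $\nabla\zeta=\sum_k(\psi_k\nabla\zeta_k+\zeta_k\nabla\psi_k)$; the first sum carries the fine-scale gradient and is controlled chart-by-chart via the Hardy inequality (\ref{Jan17b}), while the second sum is uniformly bounded, supported in the $O(\varepsilon)$-thin collar, and therefore absorbed by the same Hardy/Poincar\'e estimate. The same device repairs your construction with no change to the rest of the argument: replace $\theta$ by $\theta=\sum_k\theta_k\psi_k$ and note that the extra terms $\theta_k\nabla\psi_k$ are bounded and supported in $S$, hence contribute only $C\|\varphi\|_{L^2(S)}\|\nabla\Psi\|_{L^2(S)}\le C\varepsilon\|\nabla\varphi\|_{L^2(S)}\|\Psi\|$.

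A second, smaller defect is the set $S$. As you define it, $S\cap B_k=\{|y_n-h_k|<|h_k-g_k|\}$, which over the portion of the boundary where $h_k<g_k$ (i.e.\ where $\Omega_2$ recedes inside $\Omega_1$) contains the slab $\{2h_k-g_k<y_n<h_k\}$ lying outside $\Omega_2$, so the stated inclusion $S\subset\Omega_2$ fails and is certainly not ``immediate''. Since, as you yourself note, all three integrands in your identity are supported in $S\cap\Omega_1\cap\Omega_2$, the fix is harmless: take $S'=(S\cap\Omega_1\cap\Omega_2)\cup(\Omega_2\setminus\Omega_1)$, which satisfies $\Omega_2\setminus\Omega_1\subset S'\subset\Omega_2$, has $|S'|\le C|\Omega_2\setminus\Omega_1|$ by the same slicing argument, and still dominates the right-hand side of your final estimate. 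With these two repairs the proof is correct; the test-function cancellation is an attractive alternative to the paper's ansatz $\Psi=\zeta\varphi+v$ because it avoids having to verify $\zeta\varphi\in H_2$ and produces the error terms in one step, at the price of having to check that $\theta\Psi\in H_1$, which your Hardy estimate for $\Psi\nabla\theta$ supplies.
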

\begin{proof}
There exists a set of smooth functions $\psi_k\in C^1(B_k)$ with
compact support such that $\psi_1(x)+\cdots+\psi_N(x)=1$ on ${\cal
V}_\delta$.

We take a smooth function $\eta=\eta(t)$ which is equal to $1$ for
$t<3/2$ and to $0$ for $t>2$ and introduce the function
$\zeta_k(x)$ in $B_k$ by
$$
\zeta_k (x)=\eta\Big (\frac{y_n-g_k(y')}{h_k(y')-g_k(y')}\Big )
$$
if $h_k(y')>g_k(y')$ and $\zeta_k (x)=0$ otherwise. Here  the
local variable $y$ is considered as a function of $x$. We define
also
$$
\zeta (x)=\sum_{k=1}^N\zeta_k(x)\psi_k(x).
$$
One can verify that $\zeta(x)=1$ if $x\in \Omega_2$ and
$y_n-g_k(y')<3(h_k(y')-g_k(y'))/2$. Moreover, the following
inequality holds:
\begin{equation}\label{Jan19a}
\int |(\nabla\zeta)u (x)|^2dx\leq C\int_{\supp\zeta} |\nabla
u|^2dx
\end{equation}
for $u\in\W^{1,2}(\Omega_2)$.  It is sufficient to prove a local
version of this inequality, i.e.
\begin{equation}\label{Jan17b}
\int_{B_k} |\nabla_y \zeta_ku|^2dy\leq
c\int_{B_k,0<y_n-g_k(y')<2|h_k(y')-g_k(y')|}|\nabla u|^2dy
\end{equation}
for smooth functions $u$ equals zero for $y_n<g_k(y')$. Estimate
(\ref{Jan17b}) follows from one dimensional Hardy inequality.

Now, we are looking for the function $\Psi$ in the form
$\Psi=\zeta\varphi +v$. Then $v$ satisfies the equation
$$
(v,w)=-\langle \varphi
A_{ij}\partial_{x_j}\zeta,\partial_{x_i}w\rangle+\langle
A_{ij}\partial_{x_j}\zeta\partial_{x_i}\varphi,w\rangle-\lambda_m\langle\zeta\varphi,w\rangle.
$$
Putting here $w=v$ and using (\ref{Jan19a}), we get
$$
(v,v)\leq C\int_{\supp\zeta}|\nabla\varphi|^2dx,
$$
which implies (\ref{Jan19b}).
\end{proof}

From Propositions \ref{Prop19a}, \ref{Prop19b} and Corollary
\ref{Kor2} it follows

\begin{kor} There exists a domain $S$ such that $\Omega_2\setminus
\Omega_{1}\subset S\subset\Omega_2$, $|S|\leq
C|\Omega_2\setminus\Omega_1|$ and the following inequality
$$
\big |\mu_k^{-1}-\lambda_m^{-1}\big |\leq
C_1\max\int_{(\Omega_1\setminus\Omega_2)\cup S} |\nabla
\varphi|^2dx
$$
holds, where $\max$ is taken over all $\varphi\in X_m$ satisfying
$||\varphi||=1$.
\end{kor}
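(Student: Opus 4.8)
The plan is to obtain the estimate by feeding Corollary~\ref{Kor2} the two boundary‑layer bounds established just above. Recall that Corollary~\ref{Kor2} asserts $|\mu_k^{-1}-\lambda_m^{-1}|\le C\rho_0$, where $\rho_0=\max_{\varphi\in X_m,\,||\varphi||=1}(||T_0\varphi||^2+||\Psi_\varphi||^2)$ and $T_0=I-S_0$, with $S_0$ the orthogonal projector onto $H_1\cap H_2=\W^{1,2}(\Omega_1\cap\Omega_2)$, provided the distance $\sigma$ in (\ref{1.1azz}) is sufficiently small. So the argument splits into three steps: verify that $\sigma$ is small in the present setting, bound $||T_0\varphi||^2+||\Psi_\varphi||^2$ by a gradient integral over a thin set, and assemble.

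First I would check smallness of $\sigma$. Since $\Omega_\varepsilon\subset\Omega_2\subset\Omega_\varepsilon^+$ with $\Omega_1,\Omega_2$ uniformly Lipschitz (Lipschitz constant $\le C_*$ independent of $\varepsilon$), the Poincar\'e‑type inequality (\ref{Jan11a}) holds at every $x_0\in\partial\Omega_1$ with a constant independent of $\varepsilon$, because $\Omega_\varepsilon^+\cap B_{q\varepsilon}(x_0)$ sits inside an $O(\varepsilon)$‑wide Lipschitz collar on which such a Poincar\'e inequality is classical. Hence the Proposition of Sect.~\ref{SectGla} applies and yields a function $\sigma(\varepsilon)\to0$ as $\varepsilon\to0$; so for $\varepsilon$ small, Corollary~\ref{Kor2} is available.

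Next I would collect the two bounds. Proposition~\ref{Prop19a} already gives $||T_0\varphi||^2\le C\int_{\Omega_1\setminus\Omega_2}|\nabla\varphi|^2\,dx$ for $\varphi\in X_m$. Proposition~\ref{Prop19b} produces a set $S$ with $\Omega_2\setminus\Omega_1\subset S\subset\Omega_2$ and $|S|\le C|\Omega_2\setminus\Omega_1|$ such that $||\nabla\Psi_\varphi||_{L^2(\Omega_2)}^2\le C_1\max_{\psi\in X_m,\,||\psi||=1}\int_S|\nabla\psi|^2\,dx$; by the ellipticity bounds (\ref{Jan3a}) the form norm $||\cdot||$ is equivalent to the $L^2$‑norm of the gradient, so $||\Psi_\varphi||^2$ is controlled by the same right‑hand side. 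Adding the two estimates and using that both $\Omega_1\setminus\Omega_2$ and $S$ lie in $(\Omega_1\setminus\Omega_2)\cup S$, we get $\rho_0\le C\max_{\varphi\in X_m,\,||\varphi||=1}\int_{(\Omega_1\setminus\Omega_2)\cup S}|\nabla\varphi|^2\,dx$, and Corollary~\ref{Kor2} then gives the asserted inequality with the set $S$ inherited from Proposition~\ref{Prop19b}.

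For the Corollary itself there is essentially no obstacle: it is a bookkeeping combination, the only point to watch being that the set $S$ and all constants coming from Propositions~\ref{Prop19a}, \ref{Prop19b} and from the smallness‑of‑$\sigma$ step are uniform in $\varepsilon$, which is precisely what those statements claim. The genuinely delicate parts of the whole chain sit upstream — notably the construction of the cut‑off $\zeta$ together with the Hardy‑inequality estimate (\ref{Jan19a})–(\ref{Jan17b}) inside Proposition~\ref{Prop19b}, and the uniform collar Poincar\'e inequality used to control $\sigma$ — but those are already in place, so here it remains only to combine the pieces.
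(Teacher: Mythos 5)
Your argument is exactly the one the paper intends: apply Corollary~\ref{Kor2} and feed in the bounds from Propositions~\ref{Prop19a} and~\ref{Prop19b}, using the ellipticity equivalence of $||\cdot||$ with the $L^2$-norm of the gradient. The extra care you take to check that $\sigma$ is small (via the collar Poincar\'e inequality and the global-perturbation proposition of Sect.~\ref{SectGla}) is a sound and worthwhile addition, since the paper's one-line proof leaves this implicit.
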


{

\end{document}